\DeclareMathOperator{\dist}{dist}
\DeclareMathOperator{\id}{id}
\DeclareMathOperator{\intt}{int}
\DeclareMathOperator{\vol}{vol}
\theoremstyle{plain}
\newtheorem{theorem}{Theorem}[section]
\newtheorem{definition}[theorem]{Definition}
\newtheorem{corollary}[theorem]{Corollary}
\newtheorem{lemma}[theorem]{Lemma}
\newtheorem{rem}[theorem]{Remark}
\newtheorem{prop}[theorem]{Proposition}
\newtheorem{ex}[theorem]{Example}
\newcommand{\RR} {\mathbb R}
\newcommand{\pa} {\partial}
\newcommand{\Cal} {\mathcal}
\newcommand{\beq} {\begin{equation}}
\newcommand{\eeq} {\end{equation}}
\newcommand{\ep} {\varepsilon}
\newcommand{\loc} {\operatorname{loc}}
\newcommand{\supp} {\operatorname{supp}}
\newcommand{\diam}{\operatorname{diam}}
\numberwithin{equation}{section}
\begin{document}
\title{Polyhedral billiards, eigenfunction concentration and almost periodic control}
\author[]{Mihajlo Ceki\'{c}, Bogdan Georgiev and Mayukh Mukherjee}
\date{\today}
\address{Max-Planck Institute for Mathematics, Vivatsgasse 7, 53111, Bonn, Germany}
\curraddr{Laboratoire de Math\'ematiques d’Orsay, Univ. Paris-Sud, CNRS, Universit\'e Paris-Saclay, 91405
Orsay, France}
\email{mihajlo.cekic@u-psud.fr}
\address{Fraunhofer Institute, IAIS, Schloss Birlinghoven, 53757, Sankt Augustin, Germany}
\email{bogdan.georgiev@iais.fraunhofer.de}
\address{IIT Bombay, Powai, Maharashtra 400076, India}
\email{mathmukherjee@gmail.com}
\maketitle

\begin{abstract}
	We study dynamical properties of the billiard flow on convex polyhedra away from a neighbourhood of the non-smooth part of the boundary, called ``pockets''. We prove there are only finitely many immersed {\em periodic tubes} missing the pockets and moreover establish a new quantitative estimate for the lengths of such tubes. This extends well-known results in dimension $2$. We then apply these dynamical results to prove a quantitative Laplace eigenfunction mass concentration near the pockets of convex polyhedral billiards. As a technical tool for proving our concentration results on irrational polyhedra, we establish a control-theoretic estimate on a product space with an almost-periodic boundary condition. This extends previously known control estimates for periodic boundary conditions, and seems to be of independent interest.
\end{abstract}
	\section{Introduction}\label{sec:intro}
	
	In the present paper our interests are twofold. The first line of studies is related to several dynamical properties of the billiard flow on convex polyhedra in $\RR^n$.  The second line of questions addresses an allied spectral problem motivated from quantum physics: namely, the problem of high energy eigenfunction concentration. 
	
	Concerning the dynamical perspective, there has been a lot of interest in the billiard flow on polygons -- for example, we refer to \cite{GKT, DG, BKM, CG, BZ, K} etc. For a more comprehensive overview, one might also wish to consult the extensive work \cite{KH}. As is well-known, this area of research contains questions that might seem quite simple looking at first, but turn out to be rather deep on closer investigation: an illustration is given by Galperin's example of non-periodic and not everywhere dense billiard trajectories in convex two-dimensional polygons (see \cite{G}).
	
	Moreover, when one increases the number of dimensions and studies the billiard flow on polyhedra (either convex or non-convex) further challenges arise. To get an insight into some of the issues involved in generalising reasonably well-known two-dimensional results to three dimensions, we refer the reader to  \cite{GKT}; see also \cite{B2}, which gives a three-dimensional analogue of a result by Katok (see \cite{K}) regarding the entropy of the symbolic dynamics of the billiard flow on polygons to higher dimensions.
	

	In this work we consider the billiard flow on polyhedra, particularly the interaction of the flow with the pockets of the billiard (i.e. an open neighbourhood around the singular set of the polytope - cf. Section \ref{subsec:bill_dyn_def}). Our main interest is in the finer properties of billiard dynamics away from the pockets. In this direction our results say that for convex polyhedra, the ``non-singular part'' of the phase space away from the pockets essentially decomposes into a collection of finitely many immersed ``periodic tubes''. 
	Moreover, we show that the lengths of the immersed ``periodic tubes'' satisfy a certain quantitative estimate. 
	
	We now turn to the second line of research devoted to the study of the underlying spectral problem. Let $-\Delta$ denote the Dirichlet (or Neumann, as the case might be) Laplacian on a convex polyhedron $P \subset \RR^n$. In \cite{HHM} concentration of eigenfunctions on certain flat polygonal surfaces has been investigated. The result states that given a polygon $P$ whose vertex set is denoted by $V$ and an open neighbourhood $U$ of $V$, there exists a positive constant $c(U)$ such that for any Dirichlet eigenfunction $u$ (satisfying $-\Delta u = \lambda u$) the following concentration estimate holds:
	\begin{equation}\label{eq:main-result}
		\int_U |u|^2 \geq c \int_P |u|^2.
	\end{equation}
	In other words, a certain amount of eigenfunction mass collects near the ``singular points'' of the boundary $\pa P$, or, $U$ {\em analytically controls} $P$ in a certain sense (see \cite{RT}). Clearly, this can be thought of as an ergodicity phenomenon. We study this phenomenon in further settings, namely higher dimensional polytopes. Since the Laplace eigenfunctions are eigenfunctions of the wave propagator $U(t) := e^{it\sqrt{-\Delta}}$, it is expected (at least heuristically) that the high-frequency limits $\lambda \to \infty$ should reflect the dynamics of the classical geodesic flow. In particular, when the geodesic flow is ergodic, one should expect the eigenfunctions to diffuse in phase space, which gives an intuitive feeling as to why estimates like (\ref{eq:main-result}) should be expected to hold (see also \cite{Z}). 
	However, note that a typical neighbourhood of the singular points of $\pa P$ is not large enough to geometrically control $P$ (in the sense of \cite{BLR, BZ, BZ1}), and special properties of the billiard flow on polyhedra need to be used. For other studies of eigenfunctions on polygons see \cite{M, MR}; see also \cite{BS, R} for general overviews of a somewhat different flavour. 
	
	For our main result, consider a convex polyhedron $P\subset \mathbb{R}^n$ and write $\mathcal{S}$ for the \emph{singular set} of the boundary $\partial P$. By definition $\mathcal{S} \subset \partial P$ is the union of faces of dimension $\leq n - 2$, or in other words, the $(n-2)$-skeleton of $\partial P$. This brings us to our main result
	
	\begin{theorem}\label{thm:eigenf_conc}
    	Given a neighbourhood $U$ of $\mathcal{S}$ inside the polyhedron $P$, there exists a positive constant $c = c(U)$ such that for any $L^2$-normalised Dirichlet (or Neumann) eigenfunction $u$, we have that %
    	\beq\label{ineq:eigen_conc}
    	\int_U |u|^2 \geq c.
    	\eeq
    \end{theorem}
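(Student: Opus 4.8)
We argue by contradiction, separating low and high frequencies. Below any fixed energy there are only finitely many Dirichlet (resp. Neumann) eigenvalues, and the corresponding eigenfunctions are real-analytic in $\intt P$, hence cannot vanish identically on the nonempty open set $U\cap\intt P$; so the infimum of $\int_U|u|^2$ over this finite family is positive. It thus suffices to rule out a sequence $(u_k)$ of $L^2$-normalised eigenfunctions with $\lambda_k\to\infty$ and $\int_U|u_k|^2\to 0$. To such a sequence we associate a semiclassical defect (Wigner) measure $\mu$ on the billiard phase space of $P$, constructed by reflecting across the codimension-one faces in the standard way. Since $\int_U|u_k|^2\to 0$ and $U$ is a neighbourhood of $\mathcal S$ (the ``pockets''), interior elliptic estimates show that no mass concentrates on $\mathcal S$, so that $\mu$ is a nonzero nonnegative measure, invariant under the billiard flow, with $\supp\mu$ disjoint from a neighbourhood of $\mathcal S$; in particular $\mu$ misses the pockets.

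Every point of $\supp\mu$ has its whole billiard orbit inside $\supp\mu$, hence away from the pockets. By the dynamical theorem (finiteness of periodic tubes missing the pockets together with the length estimate), such an orbit must be periodic and lie in one of finitely many immersed periodic tubes $T_1,\dots,T_m$, so $\mu=\sum_i\mu_i$ with $\supp\mu_i\subset T_i$ and $\mu_{i_0}\neq 0$ for some $i_0$. Fix $T=T_{i_0}$ and unfold it: developing the periodic orbit into $\RR^n$ turns a neighbourhood of $T$ into a straight beam $N\times\RR$, where $N\subset\RR^{n-1}$ is the cross-section (a polytope whose facets are the faces of $P$ grazed by the tube), and the billiard flow on $T$ becomes the suspension of the cross-sectional return isometry $g\colon x\mapsto Ax+b$, $A\in O(n-1)$, over an interval of length equal to the period. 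Accordingly, $u_k$ restricted microlocally near $T$ unfolds to $v_k$ on the mapping torus $M_g$ solving $-\Delta v_k=\lambda_k v_k$ with Dirichlet (resp. Neumann) conditions on $\pa N\times\RR$ and $g$-equivariance, and the hypothesis yields $\int_\omega|v_k|^2\to 0$, where $\omega\subset M_g$ is the image of a neighbourhood of the ``free end'' $\Gamma\subset\pa N$: the part of $\pa N$ along which the beam, if widened, would hit $\mathcal S$. Maximality of the tube forces $\Gamma\neq\emptyset$, and $\Gamma$ develops into a set whose image lies in $U$.

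It remains to contradict $\int_\omega|v_k|^2\to 0$. Split $A$ into the block on which it has finite order and the block on which it acts by an irrational rotation. On the latter block the return map is uniquely ergodic, so the only flow-invariant measures on the corresponding factor of $M_g$ are multiples of the natural (Lebesgue) measure, which charges every nonempty open set; hence $\mu_{i_0}$ cannot vanish over $\omega$, a contradiction. If instead the finite-order block is nontrivial, one invokes the control-theoretic estimate on the product of the ``rational'' cross-section with the period circle, carrying the irrational block as an \emph{almost-periodic boundary identification} (the estimate established in the paper; when $A$ has finite order outright this reduces to the known periodic-boundary control estimates). Since the cross-sectional polytope is lower-dimensional, this is organised as an induction on $n$ with base case the two-dimensional result of \cite{HHM}; the product control estimate together with the inductive hypothesis gives $\int_\omega|v_k|^2\geq c'>0$ along the sequence, the desired contradiction. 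Unwinding the reductions proves $\int_U|u|^2\geq c(U)$ for all eigenfunctions.

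The main obstacles are: (i) showing that a \emph{maximal} periodic tube always possesses a nonempty free end abutting $\mathcal S$, so that $\omega$ is nonempty and contained in $U$; and (ii) the control estimate on a product space with an almost-periodic boundary condition --- equivalently, propagating observability through the generically irrational cross-sectional return map. Ancillary technical points are the rigorous microlocal unfolding near $T$ past the polyhedral faces, the verification that $\mu\neq 0$ (no escape of mass to $\mathcal S$), and the bookkeeping of the dimensional induction.
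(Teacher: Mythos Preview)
Your overall architecture---contradiction, semiclassical defect measure supported in finitely many periodic tubes, then a control estimate on each tube---matches the paper. The gap is in how you rule out mass on a fixed tube. The unique ergodicity step fails: the cross-sectional return map $g$ acts on a convex domain $\Omega\subset\RR^{n-1}$, not on a torus. Even for an irrational rotation in $SO(2)$ acting on a disc, the system is \emph{not} uniquely ergodic; every rotationally symmetric probability measure (in particular the Dirac mass at the centre, which corresponds to the central closed geodesic of the tube) is invariant. Hence flow invariance alone does not force $\mu_{i_0}$ to charge the annular set $\omega$ near $\partial N$, and your dichotomy yields no contradiction on the ``purely irrational'' side. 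The proposed split of $A$ into rational and irrational blocks, and the induction on $n$, are therefore built on a false premise and are in any case not needed.

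The paper proceeds differently on the tube. It applies the almost-periodic control estimate (Theorem~\ref{thm:cont_per_cond}) directly on the full cross-section $\Omega$ for the \emph{whole} isometry $\mathcal{R}$, with no block decomposition and no dimensional induction. The point you are missing is why the right-hand side of that estimate tends to zero. One multiplies $u_n$ by an $\mathcal{R}$-invariant cutoff $\chi$ supported in $\Omega_{\varepsilon/2}$ and then by a constant-coefficient semiclassical PDO $\Phi$ microlocalising to a narrow cone $\Gamma$ around the tube direction. The commutator $[\Delta,\chi]$ lives on the shell $\Omega_{\varepsilon/2}\setminus\Omega_{\varepsilon/2+\eta}$; to kill it one uses Property~\eqref{eq:P2'} (Lemma~\ref{lemma:P2'}): every ray from that shell with direction in $\Gamma$ enters $F^{-1}(U_\varepsilon)$ in bounded time, so by flow invariance and $\mu|_{\pi^{-1}(U_0)}=0$ the localised measure $|a|^2\chi^2\mu$ already vanishes there. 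This makes both the $\omega$-term and the $H^{-1}_xL^2_t$-term in \eqref{eq:mainineq} go to zero, forcing $\mu=0$ near the tube direction. Your ``free end'' intuition points toward Property~\eqref{eq:P2'}, but that property together with the microlocal cutoff is what actually replaces both your ergodicity claim and your induction.
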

    
	Again, we observe that the claim of Theorem \ref{thm:eigenf_conc} holds for all eigenfunctions, as opposed to ``almost all'' eigenfunctions (i.e. a density $1$ subsequence) as usually established in quantum ergodicity statements.
	
	Before getting into more details, we outline the strategy presented in \cite{HHM} that resolves the two-dimensional case $n = 2$. The idea is based upon the following two special properties of the billiard flow on polygonal domains:
	\vspace{3pt}
	\begin{equation} \label{eq:P1}
	    \begin{minipage}{0.85\textwidth}
	        \emph{Property 1.} Every billiard trajectory which avoids the neighbourhood $U$ is periodic (see \cite{GKT}). Such periodic trajectories come in $1$-parameter families and form ``cylinders'' of parallel trajectories in $P \setminus U$.
	    \end{minipage}\tag{P1}
	\end{equation}
	\begin{equation}\label{eq:P2}
	    \begin{minipage}{0.85\textwidth}
	        \emph{Property 2}. Furthermore, there are only finitely many such cylinders (see \cite{H, DG}).
	    \end{minipage}\tag{P2}
	\vspace{3pt}
	\end{equation}
	
	Now, let us assume to the contrary that a sequence of eigenfunctions concentrates away from $U$. The two dynamical properties mentioned above, together with propagation results for eigenfunctions, imply that the corresponding semiclassical measure on the sphere bundle must concentrate along the families of periodic geodesics that sweep out such cylinders. This is ruled out using an argument from \cite{M}, which in turn is based on an estimate by Burq and Zworski (see \cite{BZ, BZ1}). This concludes the discussion and proves the concentration estimate over $U$.
	
	
	\subsection{A brief outline of our results}
	
	We first discuss the dynamical side of our results. Our main innovations in this direction will be to discover suitable replacements and generalisations of Properties \eqref{eq:P1} and \eqref{eq:P2} to higher dimensional polyhedra. 
	
 It is known (see \cite{GKT}) that a billiard trajectory which avoids a neighbourhood of the singular points need not be periodic itself, but it is contained in an immersed tube that ``closes up'', with a suitable cross-section. Such a tube has the property that (a certain iterate of) the Poincar\'{e} return map on the cross-section of the tube is a rotation. This can be used as a substitute for Property \eqref{eq:P1} above, and we will call such tubes ``periodic tubes'' with slight abuse of language (see Definition \ref{def:periodic_tube} below and the explanations that follow).
	
	As we will demonstrate below, Property \eqref{eq:P2} will also generalise in our setting, in the sense that the number of such immersed periodic tubes missing a neighbourhood of $\mathcal{S}$ is also finite, which we call the ``finite tube condition''. Moreover, we further strengthen this claim, as we obtain a quantitative estimate for the lengths of such tubes.  A novelty here is that one needs to quantify the formation of singular points on the boundary of these tubes, which involves a certain dynamical invariant of the associated rotation.
	
	However, we will show that a much weaker property than the finite tube condition is sufficient for the application to eigenfunctions, and we call this Property \eqref{eq:P2'} (see Section \ref{subsec:P2'} below). It roughly says that for each periodic tube, the flow-out of a neighbourhood of the singular set contains a conical vicinity of the tube direction, near the boundary of the tube. We emphasise that despite the fact that the finite tube condition \emph{could} be used in our proofs, we regard the Property \eqref{eq:P2'} as a less strict and more natural condition, which may lead to further applications of our analytic argument.
	
	
    
    From the analytical standpoint, we prove that the Burq-Zworski type control result used by \cite{HHM} extends easily and applies to the case when $P$ is \emph{rational}. However, in the case of an irrational polyhedron, further complications arise as 
    our periodic tubes consist of billiard trajectories that are no longer individually periodic, but are ``almost periodic'', except for one ``central'' trajectory. This motivates us to introduce the new notion of an \emph{almost periodic boundary condition} on a cylinder. Our main contribution in this analytical part is proving a version of a control result due to Burq and Zworski (see \cite[Proposition 6.1]{BZ1}) that holds in higher dimensions for almost periodic boundary conditions. This is enough to address the case of irrational polyhedra and could potentially have other applications. We mention in passing that our methods also yield a generalisation of \cite[Theorem 2]{M}, see Theorem \ref{thm:conc_part_rect} below. This basically says that given any immersed (almost) periodic tube $T$ in $P$, no eigenfunction can concentrate in $T$ and away from $\pa T$.
    
      Finally, we mention that it is possible to state a more general version of Theorem \ref{thm:eigenf_conc} in a somewhat more abstract setting, as follows. Take a convex domain $\Omega \subset \RR^n$ with piecewise-smooth boundary, and which is such that $\pa \Omega$ consists of a finite disjoint union of convex $(n - 1)$-dimensional polyhedra $\cup_{i = 1}^k \Cal{F_i}$, which are called the ``faces'' of $\Omega$, and the set $\Cal{S} := \pa \Omega \setminus \cup_{i = 1}^k \Cal{\intt F_i} $. A variant of the standard billiard flow can be introduced on $\Omega$ which is the usual billiard flow with the stipulation that the billiard particle is stopped when it hits $\Cal{S}$. Then it can be checked that the proof of Theorem \ref{thm:eigenf_conc} goes through verbatim to give us the following corollary:
    \begin{corollary}\label{cor:abs}
    Let $U$ be any neighbourhood of $\Cal{S}$ inside $\Omega$. Then, there exists a positive constant $c = c(U)$ such that for any $L^2$-normalised Dirichlet (or Neumann) eigenfunction $u$, we have that 
    $$
    \int_U |u|^2 \geq c.
    $$
    \end{corollary}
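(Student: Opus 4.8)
The plan is to rerun the proof of Theorem \ref{thm:eigenf_conc} essentially unchanged, verifying at each step that the dynamical and analytic inputs depend only on the local structure of $\Omega$ near its faces and near the periodic tubes, and hence are insensitive to the global combinatorics of $\pa\Omega$. I would argue by contradiction: if no such $c(U)$ existed there would be a sequence of $L^2$-normalised Dirichlet (or Neumann) eigenfunctions $u_k$ with $-\Delta u_k = \lambda_k u_k$ and $\int_U |u_k|^2 \to 0$. The low-energy case is disposed of first: if the $\lambda_k$ stayed bounded one passes to a subsequence with $\lambda_k$ equal to a fixed eigenvalue, and since eigenspaces are finite-dimensional the $u_k$ would subconverge in $L^2$ to an $L^2$-normalised eigenfunction vanishing on the open set $U$, contradicting unique continuation (interior real-analyticity of eigenfunctions). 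So one may assume $\lambda_k \to \infty$ and work semiclassically.

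Next I would re-establish the dynamical input: every trajectory of the stopped billiard on $\Omega$ that avoids $U$ should lie in one of finitely many immersed periodic tubes --- the analogues of Properties \eqref{eq:P1} and \eqref{eq:P2}, or at least of the weaker Property \eqref{eq:P2'}. The point is that each face $\Cal{F}_i$ is a flat convex $(n-1)$-polyhedron and the particle is stopped exactly on $\Cal{S} = \pa\Omega \setminus \cup_i \intt\Cal{F}_i$, so near any trajectory missing $U$ the stopped flow on $\Omega$ coincides with the billiard in a convex Euclidean polyhedron: reflections occur across affine hyperplanes and the only obstruction to continuation is a hit on the codimension-$\geq 2$ stratum. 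Therefore the unfolding construction, the fact that a suitable iterate of the Poincar\'e return map on a cross-section is a rotation, the finite tube condition, and the quantitative length estimate all transfer without change, their proofs being local and combinatorial and using only flatness of faces together with convexity of $\Omega$.

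On the analytic side I would then take a semiclassical defect measure $\mu$ of a subsequence of $(u_k)$: it is a probability measure on the cosphere bundle which, because $\int_U|u_k|^2 \to 0$, charges no neighbourhood of $\Cal{S}$. Standard propagation of singularities --- legitimate here since $\mu$ lives away from $\Cal{S}$, where $\pa\Omega$ is smooth and the reflection is the honest geometric one --- shows $\mu$ is invariant under the billiard flow; hence $\supp\mu$ consists of complete trajectories missing $U$ and, by the dynamical step, is contained in finitely many periodic tubes, with $\mu$ a convex combination of flow-invariant probability measures on the individual tubes. To finish I would invoke Theorem \ref{thm:conc_part_rect}, which rules out concentration inside a single (almost) periodic tube away from its boundary --- applicable because near such a tube $\Omega$ looks like a flat cylinder, with the rational case handled by the Burq--Zworski control estimate and the irrational case by the almost-periodic control estimate proved earlier. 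This forces $\mu$ to be the zero measure, a contradiction.

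The step requiring genuine care --- and the sense in which ``goes through verbatim'' is really ``goes through after a short check'' --- is the dynamical one: one must confirm that the more permissive face structure of $\Omega$ (for instance artificial flat edges with dihedral angle $\pi$, or faces that are proper sub-polyhedra of a flat part of $\pa\Omega$) creates no new families of non-periodic trapped trajectories and does not spoil the length bound. Since enlarging the stratification only enlarges $\Cal{S}$ and hence $U$, while trajectories strictly missing $U$ never meet the new strata, I expect this to cause no trouble; every other ingredient (unique continuation, semiclassical measures, propagation, the two control estimates) is purely local near $\pa\Omega \setminus \Cal{S}$ or near a tube and is therefore unaffected.
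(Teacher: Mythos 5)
Your overall architecture (argue by contradiction, extract a semiclassical measure, show it is an invariant probability measure charging no neighbourhood of $\Cal{S}$, transfer the dynamical results since trajectories missing $U$ only reflect in the flat convex faces, then kill the measure tube by tube via the control estimates) is exactly the route the paper intends when it says the proof of Theorem \ref{thm:eigenf_conc} ``goes through verbatim'', and your checks of the dynamical transfer and of the low-energy case are sound. The problem is the last step: you replace the paper's microlocal argument by a direct invocation of Theorem \ref{thm:conc_part_rect}, and that does not close the contradiction. Theorem \ref{thm:conc_part_rect} is a purely physical-space estimate: for a neighbourhood $U'$ of $\pa T$ inside $\Omega$ it gives $\int_{U'}|u_n|^2 \geq C\int_{T}|u_n|^2$. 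To deduce $\mu=0$ on the lifted tube you would need the left-hand side to tend to zero, but $U'$ is \emph{not} contained in the given neighbourhood $U$ of $\Cal{S}$: the region near $\pa T$ can be crossed by other periodic tubes in the support of $\mu$, and indeed by the \emph{same} immersed tube in different directions (the tubes are immersed, not embedded, and generically self-intersect). So the hypothesis ``$u_n$ has vanishing mass near $\pa T$'' is simply not available, and knowing only that $\supp\mu$ lies in finitely many lifted tubes yields no contradiction from \eqref{eq:conc_part_rect}.

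This is precisely why the paper's proof of Theorem \ref{thm:eigenf_conc} does not use Theorem \ref{thm:conc_part_rect} but instead microlocalises: one takes $w_n=\Phi_n(\chi u_n)$ with $\chi$ an $\mathcal{R}$-invariant cutoff in the cross-section and $\Phi_n=a(h_nD)$ a constant-coefficient operator whose symbol is supported in a small cone $\Gamma$ around the tube direction, and then applies Theorem \ref{thm:cont_per_cond} on the mapping torus $\mathcal{C}_\varphi$. The vanishing of $\lVert w_n\rVert_{L^2(\omega_\varphi)}$ and of the source terms $\Phi_n((\Delta\chi)u_n)+2\Phi_n(\nabla_x\chi\cdot\nabla_x u_n)$ is obtained from the flow-invariance of $\mu$ together with Property \eqref{eq:P2'}: points of the collar region, \emph{when moving in directions in $\Gamma$}, flow into $U_\varepsilon$, so the directionally localised measure $\nu'=|a|^2\chi^2\mu$ vanishes there, even though the full measure $\mu$ need not vanish over that physical region. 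Your sketch omits this directional localisation, and without it (or an equivalent device) the argument does not force $\mu\equiv 0$. If you re-insert the $\chi$--$\Phi$ construction and the \eqref{eq:P2'}-based vanishing, the rest of your proposal does carry over to the generalised domain $\Omega$ as you describe.
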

    As an application of this corollary, one can say that if one has a cube with smoothened edges (eg., a die), then a certain amount of $L^2$-mass collects at any neighbourhood of the smoothened edges. As another application, consider the $3$-dimensional equivalent of the Bunimovich stadium, that is, a rectangular parallelepiped, with two topological hemispheres fitted smoothly at the ends. The above corollary will dictate that the $L^2$-mass of eigenfunctions cannot totally concentrate away from the ``wings'' (hemispheres) of the stadium.
    
     The main upshot is that polyhedra are in no way privileged objects, and the structure of the ``singularity'' of the $(n - 2)$-skeleton does not really play a special role. However, for aesthetic reasons, we prefer to state our main result in a more intuitive and geometrically appealing setting, as in Theorem \ref{thm:eigenf_conc}.

	\subsection{Open questions and further work}
	There are quite a few interesting questions left to consider. 
	It would be interesting to speculate if any of the dynamical results stated here have analogues for polyhedra which are not necessarily convex. 
	As a starting point, one can check whether statements like Theorem \ref{th:general-dynamics} work for not so badly non-convex objects, like a non-convex polyhedron which is formed from two convex polyhedra attached at a common face. From the analytical point of view, an exciting question is to determine the dependence of the constant $c = c(\varepsilon)$ in Theorem \ref{thm:eigenf_conc} in the case when $U$ is an $\varepsilon$-neighbourhood of the singular set, i.e. to quantify the estimates in \cite{HHM} and this paper, at least asymptotically as $\varepsilon \searrow 0$.
		
	\subsection{Structure of the paper}
	In Section \ref{sec:preliminaries} we introduce and study the new notion of functions satisfying an \emph{almost periodic boundary condition} on a cylinder. We also recall the theory of almost periodic functions and study pseudo-differential operators on a mapping torus.
	
	As mentioned before, our paper splits naturally into a dynamical and an analytical part. Section \ref{sec:billiard_dynamics} addresses some of the main dynamical components of our paper: we prove the finite tube condition (see Theorem \ref{thm:fintubecond}) and the Property \eqref{eq:P2'} (see Lemma \ref{lemma:P2'}). In Section \ref{sec:lengthsestimate}, we prove the other main dynamical result: a qualitative bound on lengths of  periodic tubes missing a neighbourhood of the singular set (see Theorem \ref{thm:partition}). Then, in Section \ref{sec:control_theory}, we discuss the analytical results required to prove Theorem \ref{thm:eigenf_conc}, concerning control-theoretic estimates with an almost periodic boundary condition (see Theorem \ref{thm:cont_per_cond}). In Section \ref{sec:mainthm} we prove Theorem \ref{thm:eigenf_conc} first in the (easier) case of rational polyhedra and then go on to prove it in full generality. 
	
	In the appendices, for the convenience of the reader and to make this work self-contained, we give important details about billiard dynamics and control theory. In Appendix \ref{app:A} we discuss theorems about billiard dynamics proved in \cite{GKT} and present proofs that work in any dimension. In Appendix \ref{app:B} we present a proof of a result of N. Burq in control theory on a product space with a periodic boundary condition.
	
	\section{Preliminaries}\label{sec:preliminaries}
	
	In the first part of this section, we discuss and recall the theory of \emph{almost periodic functions} with values in a Banach space. We then introduce the notion of an \emph{admissible isometry} and explain how admissible isometries give rise to \emph{almost periodic boundary conditions}. In fact, we show that \emph{every} isometry is admissible. In the last part, we discuss the theory of pseudo-differential operators on a mapping torus.
	
	\subsection{Almost periodic functions}\label{subsec:almostperiodic}
	
	We introduce the theory of almost periodic functions, as developed by H. Bohr in 1920s and later generalised by others. We will follow mostly the first two chapters of \cite{LZ82}. For this purpose, let $X$ be a Banach space with norm $\lVert{\cdot}\rVert$. We will say a number $\tau \in \mathbb{R}$ is an $\varepsilon$-\emph{almost period} of $f: \mathbb{R} \to X$ if
    \begin{align}\label{eq:almostperiod'}
        \sup_{t \in \mathbb{R}} \lVert{f(t + \tau) - f(t)}\rVert \leq \varepsilon.
    \end{align}
    We also say that a subset $E \subset\mathbb{R}$ is \emph{relatively dense} if there is an $l > 0$ such that for any $\alpha \in \mathbb{R}$, the interval $(\alpha, \alpha + l) \subset \mathbb{R}$ of length $l$ contains an element of $E$. 

    We start with a basic definition:
    \begin{definition}
        A continuous function $f: \mathbb{R} \to X$ is called \emph{almost periodic} if for every $\varepsilon > 0$, 
        there is an $l = l(\varepsilon) > 0$ such that for each $\alpha \in \mathbb{R}$, the interval $(\alpha, \alpha + l) \subset \mathbb{R}$ contains a number $\tau = \tau(\varepsilon)$ such that \eqref{eq:almostperiod'} holds.
    \end{definition}

    An immediate observation is that if $f$ is periodic, then it is almost periodic. 
    A simple example of a non-periodic, but almost periodic function is given by $f(t) = \sin t + \sin (\sqrt{2} t)$. Also, any almost periodic function is uniformly continuous \cite[Chapter 1]{LZ82}. Another equivalent definition is due to S. Bochner and says that (cf. \cite[p. 4]{LZ82})
    \begin{definition}
    Let $f: \mathbb{R} \to X$ be continuous. For $h \in \mathbb{R}$, we define $f^h(t) := f(t + h)$. Then $f$ is almost periodic if and only if the family of functions $\{f^h \mid h \in \mathbb{R}\}$ is compact in the topology of uniform convergence on $\mathbb{R}$.
    \end{definition}

    Next, we discuss expansion into trigonometric polynomials, i.e. harmonic analysis, similarly to the case of periodic functions. The fundamental theorem in this area is the \emph{Approximation Theorem} \cite[p. 17]{LZ82} which says that every almost periodic $f$ is a uniform limit of sums of trigonometric polynomials. In other words, for every $\varepsilon > 0$, there is a sum $\sum_{k = 1}^{n_\varepsilon} e^{i\lambda_k t} a_k (\varepsilon)$ that is $\varepsilon$-close to $f(t)$ in the uniform norm, where $a_k(\varepsilon) \in X$ and $\{\lambda_k\}_{k = 1}^\infty \subset \mathbb{R}$ is a countable set of exponents called the \emph{spectrum of $f$}. Clearly for periodic functions on $[0, 1]$, we may take $\lambda_0 = 0$ and $ \lambda_{2k-1} = 2k\pi, \lambda_{2k} = - 2k\pi $ for positive integer values of $k$. Several main properties of almost periodic functions can be deduced from the Approximation Theorem.

    We will denote the mean value of an almost periodic function $f$ (it may be shown that it exists, see \cite[p. 22]{LZ82})
    \[\mathcal{M}\{f\} := \lim_{T \to \infty} \frac{1}{2T} \int_{-T}^T f(t) dt.\]
    We now define the \emph{Bohr transformation} $a(\lambda; f)$ of $f$ for $\lambda \in \mathbb{R}$ as a ``mean value Fourier transform''
    \begin{align}
        a(\lambda; f) := \lim_{T \to \infty} \frac{1}{2T} \int_{-T}^T f(t) e^{-i \lambda t} dt = \mathcal{M} \{f(t) e^{-i\lambda t}\}.
    \end{align}
    The values $\lambda = \lambda_k$ in the spectrum of $f$ are then exactly the values for which $a(\lambda; f) \neq 0$. We will write formally, with no convergence implied, for $a_k := a(\lambda_k; f)$
    \begin{align}\label{eq:apfourier}
        f(t) \sim \sum_{k = 1}^\infty a_k e^{i\lambda_k t}.
    \end{align}
    However, one can show that certain Bochner-Fej\'{e}r sums converge uniformly to $f$, obtained by taking partial sums of the right hand side of \eqref{eq:apfourier} and applying suitable multipliers (see \cite[Section 2.4]{LZ82}). One may also show, using the Approximation Theorem, that the Fourier coefficients $a_k(\lambda_k; f)$ are uniquely associated to $f$. In other words, if $f$ and $g$ are two almost periodic functions, then $a(\lambda; f) = a(\lambda; g)$ for every $\lambda \in \mathbb{R}$ implies $f \equiv g$ (see \cite[p. 24]{LZ82}).

    Next, we assume that $X$ is a Hilbert space and state a Parseval-type identity, which says that if \eqref{eq:apfourier} holds, then
    \begin{align}\label{eq:parseval}
        \mathcal{M}\{\lVert{f(t)}\rVert^2\} = \sum_{k = 1}^\infty \lVert{a_k}\rVert^2 < \infty.
    \end{align}
    For a proof, see \cite[p. 31]{LZ82}. From \eqref{eq:parseval} it also follows that, as $n \to \infty$
    \[\mathcal{M}\{\lVert{f(t) - \sum_{k = 1}^n a_k e^{i\lambda_k t}}\rVert^2\} = \sum_{k = n + 1}^\infty \lVert{a_k}\rVert^2 \to 0.\]
    
    \subsection{Admissible isometries and almost periodic boundary conditions}\label{subsec:admissible}
    
    We consider a compact Riemannian manifold $(M_x, g_x)$ with Lipschitz boundary $\partial M_x$, where we have introduced the lower index notation to indicate explicitly that points on the manifold will be denoted by $x$. In the rest of the paper, we will deal with functions $u: M_x \times \mathbb{R} \to \mathbb{C}$, satisfying some invariance properties
	\begin{equation}\label{eq:invariantu}
	    u(x, t + L) = u(\varphi(x), t), \quad (x, t) \in M_x \times \mathbb{R}.
	\end{equation}
	Here $L > 0$ is a positive number (length) and $\varphi: M_x \to M_x$ is an isometry. It is clear that the invariance \eqref{eq:invariantu} can be interpreted as a boundary condition for $u$ on $M_x \times [0, L]$
	\begin{equation}\label{eq:boundarycondu}
	    u(x, L) = u(\varphi(x), 0), \quad x \in M_x.
	\end{equation}
    In order to address the desired control estimates (cf. Theorem \ref{thm:cont_per_cond} below), we first need to impose a condition on the isometry $\varphi$ that generalises the periodic case ($\varphi = \id$).
	
	We call an isometry $\varphi: M_x \to M_x$ with a corresponding induced isometry of the boundary $\varphi|_{\partial M_x}: \partial M_x \to \partial M_x$ \emph{admissible},
	if for every $\varepsilon > 0$, the set
    \begin{align}\label{eq:admissible}
        S(\varphi, \varepsilon) = \{k \in \mathbb{Z} \mid \dist(\varphi^k, \id) < \varepsilon\}.
    \end{align}
    is \emph{relatively dense} in $\mathbb{Z}$. In this definition, we include the possibility that the boundary $\partial M_x$ is empty. A set $A \subset \mathbb{Z}$ ($A \subset \mathbb{N}$) is relatively dense if there exists an $N \in \mathbb{N}$ such that every consecutive $N$ integers (positive integers) contain an element of $A$, i.e. every set of the form $\{k, k+1, \dotso, k + N - 1\}$ for $k \in \mathbb{Z}$ ($k \in \mathbb{N}$) contains an element of $A$. Here $\dist(\cdot, \cdot)$ denotes distance between mappings in $C^\infty(M_x, M_x)$. If the isometry $\varphi$ is admissible, we call the boundary condition in \eqref{eq:boundarycondu} \emph{almost periodic}. 
    

    
    
    

    Denote by $H^s(M_x)$ the Sobolev space of index $s \in \mathbb{R}$. The relation between almost periodic functions and admissible isometries is given by

    \begin{lemma}\label{lemma:admissiblealmostperiodic}
        Let $\varphi: M_x \to M_x$ be an admissible isometry and $u: M_x \times \mathbb{R} \to \mathbb{C}$ be such that $u \in C(\mathbb{R}, H^s(M_x))$ for some $s \in \mathbb{R}$, satisfying that $u(x, t + L) = u(\varphi(x), t)$ for all $(x, t)$ and some $L > 0$ fixed. Then the map
    \begin{equation}\label{eq:fdef}
        g: \mathbb{R} \ni t \mapsto u(\cdot, t) \in H^s(M_x)
    \end{equation}
    is almost periodic.
    \end{lemma}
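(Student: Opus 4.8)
The plan is to verify Bochner's compactness criterion for the family $\{g^h \mid h \in \mathbb{R}\}$ in $C(\mathbb{R}, H^s(M_x))$, or — more concretely — to directly produce $\varepsilon$-almost periods. First I would record the key rigidity coming from the invariance relation $u(x,t+L) = u(\varphi(x),t)$: by iterating, $u(x, t + kL) = u(\varphi^k(x), t)$ for all $k \in \mathbb{Z}$, which translates into $g(t + kL) = \varphi^{k*} g(t)$, where $\varphi^{k*}$ denotes the pullback operator on $H^s(M_x)$ induced by the isometry $\varphi^k$ (this is a well-defined bounded, in fact unitary on $L^2$, operator since $\varphi$ is an isometry). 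The crucial point is that if $\dist(\varphi^k, \id)$ is small in $C^\infty(M_x, M_x)$, then $\varphi^{k*}$ is close to the identity as an operator on $H^s(M_x)$; this is a standard (though slightly technical on a manifold with Lipschitz boundary) estimate on how pullback by a near-identity diffeomorphism perturbs Sobolev norms, and I would isolate it as the one genuinely analytic ingredient. Combined with admissibility of $\varphi$ — which guarantees $S(\varphi,\delta) = \{k : \dist(\varphi^k,\id) < \delta\}$ is relatively dense in $\mathbb{Z}$ for every $\delta$ — this shows that $\{kL : k \in S(\varphi,\delta)\}$ is a relatively dense set of ``almost-periods of period-lattice type''.

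Next I would handle the behaviour of $g$ on scales smaller than $L$. Since $u \in C(\mathbb{R}, H^s(M_x))$ and the invariance relation makes $g$ ``$L$-quasiperiodic'', $g$ is in fact uniformly continuous on all of $\mathbb{R}$: its restriction to $[0,L]$ is uniformly continuous by compactness, and every other unit-length behaviour is an isometric image of this via the $\varphi^{k*}$, which are uniformly bounded operators. So given $\varepsilon > 0$, pick $\eta > 0$ with $\lVert g(t) - g(t')\rVert_{H^s} < \varepsilon/2$ whenever $|t - t'| < \eta$ (uniformly in $t$). Then for $\tau = kL$ with $k \in S(\varphi, \delta)$ and $\delta$ chosen so that $\lVert (\varphi^{k*} - \id) g(t)\rVert_{H^s} < \varepsilon/2$ uniformly in $t$ — here one must be slightly careful that the operator bound is uniform over the relevant $k$, which follows because the perturbation estimate depends only on $\dist(\varphi^k,\id) < \delta$ and not on $k$ itself, together with the fact that $\sup_t \lVert g(t)\rVert_{H^s} = \sup_{[0,L]} \lVert g\rVert_{H^s} < \infty$ — we get $\sup_t \lVert g(t+\tau) - g(t)\rVert_{H^s} = \sup_t \lVert \varphi^{k*} g(t) - g(t)\rVert_{H^s} < \varepsilon/2$. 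These $\tau$ are genuine $\varepsilon$-almost periods, but they only form a lattice-like relatively dense set; to get the full relatively dense set in $\mathbb{R}$ required by the definition, I would note that the set of all $\tau$ of the form $kL$ with $k \in S(\varphi, \delta)$ has gaps bounded by $NL$ (where $N$ witnesses relative density of $S(\varphi,\delta)$ in $\mathbb{Z}$), and that this is already a relatively dense subset of $\mathbb{R}$ with $l = NL + 1$, say. That suffices for almost periodicity.

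The main obstacle I anticipate is the operator-perturbation step: making precise the claim that $\dist(\varphi^k, \id) < \delta$ in $C^\infty(M_x, M_x)$ implies $\lVert \varphi^{k*} - \id \rVert_{H^s \to H^s} \to 0$ as $\delta \to 0$, \emph{uniformly} over all such $k$, on a manifold with merely Lipschitz boundary and for general real $s$. For integer $s \geq 0$ this is a direct chain-rule computation; for negative or fractional $s$ one interpolates and dualizes, using that $\varphi^{k*}$ is an isometry on $L^2$. The Lipschitz boundary is not a serious issue because $\varphi$ maps $\partial M_x$ to $\partial M_x$, so the pullback respects the relevant function spaces; but one should state the estimate carefully. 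Everything else — the iteration identity, uniform continuity of $g$, extracting a relatively dense set of almost periods — is routine once this estimate is in hand.
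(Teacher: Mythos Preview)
Your proof is correct and follows essentially the same route as the paper's: iterate the invariance to $g(t+kL)=(\varphi^k)^*g(t)$, use admissibility to find a relatively dense set of $k$ with $\varphi^k$ close to $\id$ in $C^\infty$, and conclude that the corresponding $kL$ form a relatively dense set of $\varepsilon$-almost periods in $\mathbb{R}$. Your digression on uniform continuity of $g$ at sub-$L$ scales is unnecessary (as you yourself eventually note), and the paper phrases the key analytic step as continuity of $\psi \mapsto u(\psi(\cdot),t)$ in $H^s$ uniformly for $t\in[0,L]$ (then extended to all $t$ via the invariance and the fact that $\varphi^*$ is an isometric isomorphism of $H^s$) rather than as an operator-norm bound on $\psi^*-\id$, but these amount to the same thing here.
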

    \begin{proof}
        Let $\varepsilon > 0$. By assumption, $S(\varphi, \varepsilon) \subset \mathbb{N}$ is relatively dense, so we may pick $N_\varepsilon \in \mathbb{N}$ such that $S(\varphi, \varepsilon)$ has non-empty intersection with any $N_\varepsilon$ consecutive positive integers.
    
        Fix now $\delta > 0$ and choose $\varepsilon > 0$ small enough such that, for $\psi: M_x \to M_x$ smooth
        \begin{equation}\label{eq:epsilondelta}
            \dist(\psi, \id) < \varepsilon \implies \lVert{u(\psi(x), t) - u(x, t)}\rVert_{H^s(M_x)} < \delta
        \end{equation}
        for all $t \in [0, L]$. By the invariance property of $u$ and since $\varphi^*$ is an isometric isomorphism on $H^s(M_x)$, this also holds for $t \in \mathbb{R}$.
    
        Define $\mathcal{I}(\delta) := S(\varphi, \varepsilon) L \subset \mathbb{R}$. By the defining properties of $S(\varphi, \varepsilon)$, any interval of length $LN_\varepsilon$ contains an element of $\mathcal{I}(\delta)$, i.e. $\mathcal{I}(\delta)$ is relatively dense in $\mathbb{R}$. Also, for any $k \in S(\varphi, \varepsilon)$
        \[\sup_{t \in \mathbb{R}} \lVert{g(t + Lk) - g(t)}\rVert_{H^s(M_x)} = \sup_{t \in \mathbb{R}} \lVert{u(\varphi^k(x), t) - u(x, t)}\rVert_{H^s(M_x)} < \delta\]
        by \eqref{eq:epsilondelta}. Thus the set of $\delta$-almost periods is relatively dense for any $\delta > 0$ and so $g$ is almost periodic.
    \end{proof}
    
    In a moment, we are going to demonstrate that all isometries are in fact admissible. Before proving the general case, we first discuss why any arbitrary rotation $R \in SO(n)$ acting on the closed unit ball $D^n \subset \RR^n$ is admissible. 
    
        We first show that for any $m \in \mathbb{N}$ the higher dimensional rotations on the $m$-torus $\mathbb{T}^m := \big(\frac{\mathbb{R}}{\mathbb{Z}}\big)^m$, generated by an $m$-tuple $(\alpha_1, \dotso, \alpha_m) \in \mathbb{R}^m$ and defined by
        \begin{align}\label{eq:ntorus}
            \varphi = R_{\alpha_1, \dotso, \alpha_m}: (x_1, \dotso, x_m) \mapsto (x_1 + \alpha_1, \dotso, x_m + \alpha_m) \mod \mathbb{Z}^m
        \end{align}
        are admissible. By ergodic theory of $\mathbb{T}^m$ (using Fourier expansions), we know that $\varphi$ is ergodic iff $1, \alpha_1, \dotso, \alpha_m$ are linearly independent (l.i.) over $\mathbb{Q}$, i.e. iff $\alpha_1, \dotso, \alpha_m$ are l.i. over $\mathbb{Z}$ taken modulo $\mathbb{Z}$. We denote $\alpha := (\alpha_1, \dotso, \alpha_m) \mod \mathbb{Z}^m$ and consider the orbit of $\alpha$
        \begin{equation}
            \mathcal{T} = \{k(\alpha_1, \dotso, \alpha_m) \mod \mathbb{Z}^m : k \in \mathbb{Z}\}.
        \end{equation}
        By Kronecker's theorem, we know that the closure $\overline{\mathcal{T}} \subset \mathbb{T}^m$ is a torus.\footnote{By using the ergodicity of rotations and Kronecker's theorem one should be able to deduce the claim about $\varphi$ directly.}
        
        We give an elementary proof that $\varphi$ is admissible, not relying on Kronecker's theorem or ergodicity of rotations. First observe that $\overline{\mathcal{T}}$ is a group under addition. Fix an $\varepsilon > 0$. By compactness, we may take elements $t_1 = k_1\alpha \mod \mathbb{Z}^m, \dotso, t_N = k_N \alpha \mod \mathbb{Z}^m \in \mathcal{T}$, where $k_i \in \mathbb{Z}$, such that they are $\varepsilon$-dense, i.e. for any $t \in \overline{\mathcal{T}}$, there is an $i$ such that $t_i$ is $\varepsilon$-close to $t$. Consider now an arbitrary $k \in \mathbb{Z}$ and an element
        \[t = k(\alpha_1, \dotso, \alpha_m) \mod \mathbb{Z}^m.\]
        Then $-t \mod \mathbb{Z}^m \in \mathcal{T}$ and there is an index $i$ such that $t_i$ is $\varepsilon$-close to $-t$. Equivalently, we may say that $(k_i + k)\alpha \mod \mathbb{Z}^m$ is $\varepsilon$-close to zero. Therefore, if we put
        \[l := 2\max\{|k_1|, \dotso, |k_N|\} + 1\]
        we see that in any $l$ consecutive integers we may find one, say $r$, such that $r \alpha \mod \mathbb{Z}^m$ is $\varepsilon$-close to zero. Thus $\varphi$ is admissible.

        Now, for a rotation $R \in SO(n)$, one may take a unitary matrix $P$ so that $P^{-1} R P = Q$ is diagonal and has eigenvalues coming in pairs $(e^{i\alpha}, e^{-i\alpha})$ for some generalised angles $\alpha \in \mathbb{R}$. By the discussion above and since the action of $Q$ is conjugate to a rotation on a torus as in \eqref{eq:ntorus}, we get that for any $\varepsilon > 0$, the set of indices $k$ such that $Q^k$ is $\varepsilon$-close to $\id$ is relatively dense. This implies that $R:D^{n} \to D^{n}$ is admissible.
    
    We now generalise the main ideas in the preceding paragraphs to give a proof which will not require a precise representation for the orbits as done above. For a more abstract point of view of topological dynamics and also for the converse claim, see \cite[Chapter 4]{GoHe55}. 
    
    \begin{prop}\label{prop:iso_adm}
        Let $\varphi: M_x \to M_x$ be an isometry. Then $\varphi$ is admissible.
    \end{prop}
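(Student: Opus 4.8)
The plan is to mimic the torus argument but phrased entirely in terms of the dynamics of $\varphi$ on the isometry group, using compactness instead of an explicit eigenvalue decomposition. The key object is the closure $G := \overline{\{\varphi^k \mid k \in \mathbb{Z}\}}$ taken inside $C^\infty(M_x, M_x)$, or more conveniently inside the isometry group $\mathrm{Isom}(M_x)$ with its natural topology (the $C^0$-topology, which for isometries of a compact manifold agrees with the $C^\infty$-topology and makes $\mathrm{Isom}(M_x)$ a compact Lie group by Myers--Steenrod). Then $G$ is a closed subgroup of this compact group, hence itself compact, and $\{\varphi^k\}$ is a dense subgroup of $G$ with $\mathrm{id} \in G$.

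First I would fix $\varepsilon > 0$ and invoke compactness of $G$ to extract a finite $\varepsilon$-net $\varphi^{k_1}, \dots, \varphi^{k_N}$ (with $k_i \in \mathbb{Z}$) for $G$ in the metric $\dist(\cdot,\cdot)$; since left multiplication by $\varphi^{-k}$ is an isometry of $G$ for the bi-invariant metric — or, if one does not want to pass to a bi-invariant metric, since composition with a fixed isometry is at worst Lipschitz with a controllable constant, one can absorb this into a slightly smaller initial choice of net — the point is that for any $k \in \mathbb{Z}$ the element $\varphi^{-k} \in G$ is $\varepsilon$-close to some $\varphi^{k_i}$, equivalently $\varphi^{k_i + k}$ is ($\varepsilon$-)close to $\mathrm{id}$. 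Setting $l := 2\max\{|k_1|, \dots, |k_N|\} + 1$, every block of $l$ consecutive integers contains an index $r$ with $\dist(\varphi^r, \mathrm{id}) < \varepsilon$ (after rescaling $\varepsilon$), so $S(\varphi, \varepsilon)$ is relatively dense in $\mathbb{Z}$, which is exactly admissibility. The induced boundary isometry $\varphi|_{\partial M_x}$ is handled automatically, since $\dist(\varphi^k, \mathrm{id})$ small on $M_x$ forces $\dist(\varphi^k|_{\partial M_x}, \mathrm{id})$ small on $\partial M_x$.

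The one technical point requiring care — and the main obstacle — is the interaction of the metric $\dist$ with group composition: $\dist$ on $C^\infty(M_x,M_x)$ need not be bi-invariant, so passing from "$\varphi^{-k}$ is $\varepsilon$-close to $\varphi^{k_i}$" to "$\varphi^{k_i+k}$ is $\varepsilon$-close to $\mathrm{id}$" is not a triviality. I would resolve this by first replacing $\dist$ by a genuinely bi-invariant distance $d$ on the compact Lie group $G = \overline{\{\varphi^k\}}$ (every compact Lie group admits one, e.g. from a bi-invariant Riemannian metric), noting that $d$ and the restriction of $\dist$ to $G$ induce the same topology, so that relative density of $S(\varphi,\varepsilon)$ with respect to one follows from relative density with respect to the other (uniformly, after shrinking $\varepsilon$). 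With $d$ bi-invariant, $d(\varphi^{-k}, \varphi^{k_i}) < \varepsilon$ is literally equivalent to $d(\mathrm{id}, \varphi^{k_i + k}) < \varepsilon$, and the net argument goes through verbatim. Alternatively, one can avoid Lie theory entirely and argue as in the torus case purely metrically: choose the $\varepsilon$-net for $G$, use uniform continuity of the group operation on the compact set $G \times G$ to control $\dist(\varphi^{k_i}\circ\varphi^k, \mathrm{id})$ once $\dist(\varphi^k, \varphi^{-k_i})$ is small, and conclude. Either way the structure is: compactness of the orbit closure $\Rightarrow$ finite net $\Rightarrow$ bounded "return time" $l$ $\Rightarrow$ relative density.

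I would close by remarking that this recovers the torus and $SO(n)$ cases discussed above as special instances (there $G$ is a subtorus, respectively a subtorus of a maximal torus of $SO(n)$), and that the hypothesis that $\varphi$ is an isometry is used only to guarantee that the orbit closure $G$ is compact; any $\varphi$ with precompact forward orbit in a suitable topology would do.
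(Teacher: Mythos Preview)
Your argument is correct and takes a genuinely different route from the paper's own proof. You work upstairs in the isometry group: take the orbit closure $G=\overline{\{\varphi^k\}}\subset\mathrm{Isom}(M_x)$, use compactness of $G$ (via Myers--Steenrod, or more simply Arzel\`a--Ascoli) to extract a finite $\varepsilon$-net, then use a bi-invariant metric (or uniform continuity of composition on the compact $G$) to turn ``$\varphi^{-k}$ is near some $\varphi^{k_i}$'' into ``$\varphi^{k_i+k}$ is near $\id$''. The paper instead works downstairs on $M_x$ itself: it first shows uniform recurrence of points under an isometric system (using that $d(f^mx,f^{n_i}x)=d(f^{m-n_i}x,x)$), then applies this to the diagonal action of $(\varphi,\dotsc,\varphi)$ on $(M_x)^N$ at a point $(x_1,\dotsc,x_N)$ whose $\varepsilon$-balls cover $M_x$, obtaining $C^0$-admissibility; finally it bootstraps to $C^r$-admissibility by passing inductively to the induced isometry $d\varphi$ on the unit sphere bundle $SM_x$ with the Sasaki metric. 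Your approach is cleaner and more conceptual once one accepts the compact-group machinery and the equivalence of $C^0$ and $C^\infty$ topologies on isometries; the paper's approach is more elementary and self-contained, avoiding any appeal to Lie theory or to that topological equivalence, at the cost of the extra sphere-bundle induction to reach the $C^\infty$ statement. Both are in the spirit of the torus/$SO(n)$ discussion that precedes the proposition; you globalise it via the group, the paper via a pointwise covering argument.
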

	\begin{proof}
	    We will inductively show that $\varphi$ is \emph{$C^r$-admissible} for any $r \geq 0$, by which we mean that for each $\varepsilon > 0$ the set
	    \[S_{C^r}(\varphi, \varepsilon) = \{k \in \mathbb{Z} \mid \dist_{C^r}(\varphi^k, \id) < \varepsilon\}\] 
	    is relatively dense, where the metric $\dist_{C^r}(\cdot, \cdot)$ is now the $C^r$-metric (cf. \eqref{eq:admissible}). We start with the case $r = 0$.
	    
	    The claim hinges on proving that in compact, isometric systems, points are uniformly recurrent. For $x \in M$, consider the closure of both the forward and backward orbits
	    \[\mathcal{O}_x := \overline{\{f^m x \mid m \in \mathbb{Z}\}}.\]
	    By compactness, there is a set of indices $n_{1}, \dotso, n_{k}$ such that the points $f^{n_{1}}x, \dotso, f^{n_{k}}x$ are $\varepsilon$-dense in $\mathcal{O}_{x}$. Observe first that the set of indices $m \in \mathbb{Z}$ for which $f^mx$ is $\varepsilon$-close to $x$ is relatively dense. For, by the hypothesis there is an $i$ with 
	    \begin{equation}\label{eq:orbituniformlydense}
	        \varepsilon > d(f^mx, f^{n_i}x) = d(f^{m-n_i}x, x).
	    \end{equation}
	    Therefore, $\tilde{N} = \max(n_1, \dotso, n_k) + 1$ would work in the definition of relatively dense. Consider now the $\varepsilon$-ball $U = B(x, \varepsilon)$ around $x$ and its iterates and take any $m \in \mathbb{Z}$. Then by \eqref{eq:orbituniformlydense}, there is an $i$ such that $f^{m-n_i}x \in f^{m - n_i} U \cap U \neq \emptyset$.
	    
	    We come back to the main proof and let $U_i = B(x_i, \varepsilon)$, $i = 1, \dotso, N$ be a cover of $M$ by $\varepsilon$-balls. Apply now the previous paragraph to the isometry
	    \[(f, f, \dotso, f) : (M_x)^N \to (M_x)^N,\]
	    where on the left hand side $f$ appears $N$ times, and the point $(x_1, x_2, \dotso, x_N)$. Thus, there is a relatively dense set of integers $\mathcal{T} \subset \mathbb{Z}$ with the property that $U_i \cap f^k(U_i) \neq \emptyset$ for every $k \in \mathcal{T}$ and $i = 1, \dotso, N$. We claim that $\mathcal{T} \subset S_{C^0}(\varphi, 4\varepsilon)$. Let $k \in \mathcal{T}$, choose $y_i \in U_i \cap f^k(U_i)$ and take any $x \in M$. By construction there is an $i$ with $x \in U_i$, so $d(x, y_i) < 2\varepsilon$. Since $f$ is an isometry and $f^kx \in f^kU_i$, we have $d(f^kx, y_i) < 2\varepsilon$. By triangle inequality $d(x, f^kx) < 4\varepsilon$, which implies $\dist_{C^0}(\varphi^k, \id) \leq 4\varepsilon$ and proves the claim.
	    
	    Next, we prove the inductive hypothesis. Observe that $f$ induces a map $df: SM_x \to SM_x$ between unit sphere bundles. Moreover, this map is an isometry for the restriction of the Sasaki metric to $SM_x$ (see e.g. \cite[Chapter 1]{Pa99}). Thus we may apply the same argument as in the previous paragraph to prove the claim for $r = 1$. Similarly, by taking unit sphere bundles inductively, we may prove the claim for any $r$. Therefore, $\varphi$ is admissible.
	\end{proof}
	
	\subsection{PDOs on a mapping torus}\label{subsec:PDOmappingtorus}
	
	We will denote by $\mathcal{R} \in SO(n)$ a rotation in $\mathbb{R}^n$ that leaves the $x_n$-axis fixed. The rotation $\mathcal{R}$ is identified with the rotation it induces in $(x_1, \dotso, x_{n - 1})$ coordinates on $\mathbb{R}^{n - 1} \subset \mathbb{R}^n$. Also, define $\varphi(x_1, x_2, \dotso, x_n) := (\mathcal{R}^{-1}(x_1, \dotso, x_{n - 1}), x_n + L)$ for some positive $L$.  
	
	Consider a bounded open set $\Omega \subset \mathbb{R}^{n-1}$ invariant under the rotation $\mathcal{R}$ and define the mapping torus $\mathcal{C}_{\varphi} := (\Omega \times [0, L])/ \left( (x, L) \sim (\mathcal{R}(x), 0) \right)$. 
	The study of PDOs here is similar in spirit to the study of PDOs on the $n$-torus \cite[Chapter 5.3]{Zw}.
	
	We will consider symbol classes, for $m \in \mathbb{R}$
	\[S(m) := \{a \in C^\infty(\RR^{2n}) : |\partial^\beta a(x, \xi)| \leq C_\beta\langle{\xi}\rangle^m,\,\, \mathrm{for\,each\,multiindex} \,\, \beta\}.\]
	Here $C_\beta > 0$ is a positive constant. We denote $S(0)$ simply by $S$. The symbols are quantised by the formula $a(x, hD)u (x) = \mathcal{F}_h^{-1} a(x, \xi) \mathcal{F}_h u$, where
	\[\mathcal{F}_h u (\xi) = \int_{\mathbb{R}^n} e^{-i\frac{y \cdot \xi}{h}} u(y) dy\]
	is the semiclassical Fourier transform. Now a symbol $a \in S(m)$ defines a map $a(x, hD): \mathscr{S}(\mathbb{R}^n) \to \mathscr{S}(\mathbb{R}^n)$ and by duality $a(x, hD): \mathscr{S}'(\mathbb{R}^n) \to \mathscr{S}'(\mathbb{R}^n)$, where $\mathscr{S}(\mathbb{R}^n)$ are Schwartz functions. For $a \in S$, by standard theory we have $a(x, hD): L^2(\mathbb{R}^n) \to L^2(\mathbb{R}^n)$ uniformly in $h$. Our symbols will satisfy an additional invariance relation under $\varphi$, for $(x, \xi) \in \mathbb{R}^{2n}$
	\begin{equation}\label{eq:symbolinvariance}
	    a\big(x_1, \dotso, x_{n - 1}, x_n + L, \xi_1, \dotso, \xi_{n - 1}, \xi_n\big) = a\big(\mathcal{R}(x_1, \dotso, x_{n-1}), x_n, \mathcal{R}^{-1}(\xi_1, \dotso, \xi_{n-1}), \xi_n\big).
	\end{equation}
    
    From now on for simplicity we assume $a = a(\xi) \in S$ and satisfying \eqref{eq:symbolinvariance}, so $a = a \circ \mathcal{R}$. Then we have
        \begin{prop}\label{prop:PHI}
        The following properties hold for $\Phi$ a semiclassical PDO in $\mathbb{R}^{n}$ with symbol $a = a(\xi) \in S$, satisfying $a = a \circ \mathcal{R}$:
        \begin{itemize}
            \item[1.] $\varphi^*\Phi = \Phi \varphi^*$.
            \item[2.] $P \Phi = \Phi P$ for $P$ a constant coefficient differential operator.
        \end{itemize}
    \end{prop}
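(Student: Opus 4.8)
The plan is to exploit that, since $a = a(\xi)$ depends on the frequency variable only, $\Phi = a(hD)$ is a Fourier multiplier; both assertions are then elementary manipulations with the semiclassical Fourier transform $\mathcal{F}_h$. For item 2, I would first note that a constant-coefficient differential operator $P = \sum_\alpha c_\alpha \partial_x^\alpha$ is itself a Fourier multiplier, $\mathcal{F}_h(Pu)(\xi) = p(\xi)\mathcal{F}_h u(\xi)$ with $p(\xi) = \sum_\alpha c_\alpha (i\xi/h)^\alpha$ a function of $\xi$ alone. Hence, on $\mathscr{S}(\mathbb{R}^n)$,
\[ \Phi P u = \mathcal{F}_h^{-1}\big(a(\xi)\,p(\xi)\,\mathcal{F}_h u\big) = \mathcal{F}_h^{-1}\big(p(\xi)\,a(\xi)\,\mathcal{F}_h u\big) = P\Phi u, \]
because pointwise multiplication commutes; this extends to $\mathscr{S}'(\mathbb{R}^n)$ by duality. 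The only input is that both symbols are $x$-independent.

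For item 1, I would write $\varphi(x) = Ax + b$ with $A = \mathrm{diag}(\mathcal{R}^{-1}, 1) \in O(n)$ and $b = (0, \dots, 0, L)$, so that $\varphi^* = A^* \circ \tau_b^*$ where $A^* u = u \circ A$ and $(\tau_b^* u)(x) = u(x + b)$. Translations commute with every Fourier multiplier: $\mathcal{F}_h(\tau_b^* u)(\xi) = e^{i b \cdot \xi/h}\mathcal{F}_h u(\xi)$, and multiplication by $e^{i b\cdot\xi/h}$ commutes with multiplication by $a(\xi)$, so $\Phi \tau_b^* = \tau_b^* \Phi$. It then remains to prove $\Phi A^* = A^* \Phi$. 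Here I would invoke the change-of-variables identity $\mathcal{F}_h(u \circ A)(\xi) = (\mathcal{F}_h u)(A\xi)$, valid because $A$ is orthogonal ($|\det A| = 1$, $A^{-T} = A$), and then compute
\[ \Phi(u \circ A)(x) = (2\pi h)^{-n}\int_{\mathbb{R}^n} e^{i x\cdot\xi/h}\, a(\xi)\,(\mathcal{F}_h u)(A\xi)\, d\xi. \]
Substituting $\eta = A\xi$ (Jacobian $1$) turns the phase into $e^{i (Ax)\cdot\eta/h}$ and the symbol into $a(A^{-1}\eta)$; since $A^{-1} = \mathrm{diag}(\mathcal{R}, 1)$ acts on the frequency exactly as $\mathcal{R}$, the hypothesis $a = a \circ \mathcal{R}$ (which is \eqref{eq:symbolinvariance} specialised to $x$-independent symbols, and which moreover forces $a = a \circ \mathcal{R}^k$ for all $k$) yields $a(A^{-1}\eta) = a(\eta)$. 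Hence the last integral equals $\mathcal{F}_h^{-1}\big(a(\eta)(\mathcal{F}_h u)(\eta)\big)(Ax) = (\Phi u)(Ax)$, i.e. $\Phi A^* = A^* \Phi$. Combining the two steps gives $\varphi^* \Phi = A^* \tau_b^* \Phi = A^* \Phi \tau_b^* = \Phi A^* \tau_b^* = \Phi \varphi^*$.

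I do not expect a genuine obstacle here; the argument is essentially bookkeeping. The points to be careful about are: applying the invariance to $\mathcal{R}$ with the correct orientation (harmless by the $\mathcal{R}^k$ remark above), tracking the $h$-dependent normalising constants in $\mathcal{F}_h^{\pm 1}$ so that they cancel, and carrying out all identities first on $\mathscr{S}(\mathbb{R}^n)$ and extending by density and duality, using that $a \in S$ gives uniform-in-$h$ $L^2$-boundedness of $\Phi$. Finally, I would remark that these two commutation identities are precisely what is needed for $\Phi$ to descend to a well-defined operator on the mapping torus $\mathcal{C}_\varphi$, which is the use to which the proposition will be put.
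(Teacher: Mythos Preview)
Your proof is correct and follows essentially the same approach as the paper's: both exploit that $\Phi=a(hD)$ is a Fourier multiplier, compute how $\mathcal{F}_h$ transforms under the affine map $\varphi$, and use $a\circ\mathcal{R}=a$ after a change of variables in frequency. The only difference is organisational---you factor $\varphi$ as a translation followed by an orthogonal map and handle each piece separately, whereas the paper carries out the single change of variables $y'=\varphi(y)$ in one go; the computations are otherwise identical, and your treatment of item~2 matches the paper's exactly.
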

    \begin{proof}
        For the first item above, we have by definition and the change of coordinates $y' = \varphi(y)$
        \begin{align*}
            \mathcal{F}_h(u \circ \varphi) (\xi) &= \int_{\mathbb{R}^n} e^{- i\frac{y \cdot \xi}{h}} u \circ \varphi(y) dy = \int_{\mathbb{R}^n} e^{-i \varphi^{-1}(y') \cdot \frac{\xi}{h}} u(y') dy'\\
            &= e^{iL \frac{\xi_n}{h}} \int_{\mathbb{R}^n} e^{-i\frac{y'}{h} \cdot \mathcal{R}^{-1}(\xi)} u(y') dy' = e^{iL \frac{\xi_n}{h}} \mathcal{F}_h(u)(\mathcal{R}^{-1}(\xi)),
        \end{align*}
        where $\mathcal{F}_h$ denotes the semiclassical Fourier transform. This further implies, after a change of coordinate $\xi' = \mathcal{R}^{-1}(\xi)$ and using $a\circ \mathcal{R} = a$,
        \begin{multline*}
            (2\pi h)^{n} \Phi(\varphi^*u) (x) = \int_{\mathbb{R}^n} e^{i \frac{x \cdot \xi}{h}} \mathcal{F}_h(u \circ \varphi)(\xi) a(\xi) d\xi\\ = \int_{\mathbb{R}^n} e^{i L \frac{\xi_n'}{h}} e^{i\frac{x}{h} \cdot \mathcal{R}(\xi')} \mathcal{F}_h(u)(\xi') a(\xi') d\xi' = \int_{\mathbb{R}^n} e^{i \varphi(x) \cdot \frac{\xi'}{h}} \mathcal{F}_h(u)(\xi') a(\xi') d\xi', 
        \end{multline*}
        which is interpreted as $(2\pi h)^n \varphi^* (\Phi u) (x)$. 
        
        For the second point, simply recall that $\mathcal{F}_h (D^\alpha u) = \frac{\xi^\alpha}{h^{|\alpha|}} \mathcal{F}_h(u)$, where $D = - i\partial$ and $\alpha$ is any multiindex. The proof then follows from a straightforward computation.
    \end{proof}
    
    The first conclusion of Proposition \ref{prop:PHI} says that $a(hD) u$ is $\varphi$-invariant if $u$ is so, if we assume $a \in S$ satisfies the invariance property \eqref{eq:symbolinvariance}. In this sense, we may study the mapping properties of $a(hD)$ on $L^2(\mathcal{C}_{\varphi})$:
    
    \begin{prop}\label{prop:L^2bound}
        The symbol $a = a(\xi) \in S$ satisfying \eqref{eq:symbolinvariance} induces a map $a(hD): L^2(\mathcal{C}_{\varphi}) \to L^2(\mathcal{C}_{\varphi})$.
    \end{prop}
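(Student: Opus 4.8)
By the first part of Proposition~\ref{prop:PHI} (and its iterates) the operator $a(hD)$ already maps $\varphi$-invariant distributions to $\varphi$-invariant distributions, so the content of the statement is the quantitative $L^2$-bound. The plan is to lift $u\in L^2(\mathcal{C}_\varphi)$ to a $\varphi$-invariant function on all of $\mathbb{R}^n$ and to transfer the Calder\'on--Vaillancourt bound for $a(hD)$ on $L^2(\mathbb{R}^n)$, recalled above, down to the mapping torus. The only genuine difficulty is that such a lift is ``periodic'' (up to the twist $\mathcal{R}$) and hence \emph{not} in $L^2(\mathbb{R}^n)$, so the $\mathbb{R}^n$-bound cannot be applied directly. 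The remedy is to cut the lift into $L$-translated pieces, apply $a(hD)$ to each piece, and reassemble, controlling the interaction of far-apart pieces by pseudolocality of $a(hD)$ and using both the commutation $\varphi^*a(hD)=a(hD)\varphi^*$ and the fact that $\varphi$ and its powers act isometrically on $\mathbb{R}^n$ and, by pullback, on $L^2(\mathbb{R}^n)$.

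In detail, first lift: regard $u$ as the $\varphi$-invariant function on $\mathbb{R}^n$ with $u(x',x_n+L)=u(\mathcal{R}(x'),x_n)$, extended by zero in the $x'$-variables (consistent since $\Omega$ is bounded and $\mathcal{R}$-invariant). Its $L^2$-norm over a ball of radius $R$ grows only like $R^{1/2}$, so $u\in\mathscr S'(\mathbb{R}^n)$ and $a(hD)u\in\mathscr S'(\mathbb{R}^n)$ is well defined and $\varphi$-invariant. Fix $\psi\in C^\infty_c(\mathbb{R})$ with $\supp\psi\subset(-L,2L)$ giving a partition of unity $1=\sum_{k\in\mathbb{Z}}\psi_k$, $\psi_k(x_n)=\psi(x_n-kL)$, and set $u_0:=\psi_0u\in L^2(\mathbb{R}^n)$. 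Since the slabs $\overline\Omega\times[kL,(k+1)L]$ are $\varphi$-equivalent, $\|u_0\|_{L^2(\mathbb{R}^n)}\le C\|u\|_{L^2(\mathcal{C}_\varphi)}$ and $\psi_ku=(\varphi^{-k})^*u_0$, where $\varphi^{-k}(x',x_n)=(\mathcal{R}^k(x'),x_n-kL)$. Because $a(hD)$ commutes with each $(\varphi^{-k})^*$ and $\sum_{|k|\le K}(\varphi^{-k})^*u_0\to u$ in $\mathscr S'$ as $K\to\infty$, one gets, with $w_0:=a(hD)u_0\in L^2(\mathbb{R}^n)$, $\|w_0\|_{L^2(\mathbb{R}^n)}\le C\|u\|_{L^2(\mathcal{C}_\varphi)}$,
\[
a(hD)u=\sum_{k\in\mathbb{Z}}(\varphi^{-k})^*w_0,\qquad (\varphi^{-k})^*w_0(x',x_n)=w_0(\mathcal{R}^k(x'),x_n-kL),
\]
the series converging in $\mathscr S'$.

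The analytic core is the quantitative decay of $w_0$ away from the $x_n$-slab containing $\supp u_0$: for every $N$ there is $C_N$, uniform in $h\in(0,1]$, with $\|w_0\|_{L^2(\{|x_n|\ge T\})}\le C_N(1+T)^{-N}\|u\|_{L^2(\mathcal{C}_\varphi)}$ for all $T\ge0$. This follows from pseudolocality of $a(hD)$: each commutator $[x_j,a(hD)]$ is the quantization of a symbol in $hS$ (namely, a constant times $h\,\partial_{\xi_j}a$), hence has operator norm $O(h)$; iterating, for any multi-index $\alpha$ the operator whose convolution kernel is $(x-y)^\alpha$ times that of $a(hD)$ has norm $O_\alpha(h^{|\alpha|})$, so dividing the kernel by a high power of $|x-y|^2=\sum_j(x_j-y_j)^2$ off the diagonal yields $\|\chi_1\,a(hD)\,\chi_2\|_{L^2\to L^2}\le C_N(h/R)^N$ whenever $\dist(\supp\chi_1,\supp\chi_2)\ge R\ge1$. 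Applied with $\chi_2$ a fixed cutoff containing $\supp u_0$ and $\chi_1$ supported in $\{|x_n|\ge T\}$, this gives the claimed decay. (If $a$ has compact $\xi$-support, as is typically the case in the applications, then the kernel of $a(hD)$ is a rescaled Schwartz function and the decay is immediate.)

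Finally, reassemble on the fundamental domain $\Omega\times[0,L]$: using $\mathcal{R}^k\Omega=\Omega$, one has $\|(\varphi^{-k})^*w_0\|_{L^2(\Omega\times[0,L])}=\|w_0\|_{L^2(\Omega\times[-kL,(1-k)L])}$, and these slabs partition $\mathbb{R}$. Splitting $\sum_k$ into $|k|\le K_0$ and $|k|>K_0$ for a fixed $K_0$: for the finitely many close terms, a Cauchy--Schwarz over the $\le2K_0+1$ indices together with $\sum_k\|w_0\|^2_{L^2(\Omega\times[-kL,(1-k)L])}=\|w_0\|^2_{L^2(\Omega\times\mathbb{R})}$ bounds that part by $C\|w_0\|_{L^2(\mathbb{R}^n)}\le C\|u\|_{L^2(\mathcal{C}_\varphi)}$; for the far terms, the triangle inequality and the decay estimate bound the absolutely convergent tail by $\sum_{|k|>K_0}C_2(|k|L)^{-2}\|u\|_{L^2(\mathcal{C}_\varphi)}\le C\|u\|_{L^2(\mathcal{C}_\varphi)}$. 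Altogether $\|a(hD)u\|_{L^2(\Omega\times[0,L])}\le C\|u\|_{L^2(\mathcal{C}_\varphi)}$, which, since $a(hD)u$ is $\varphi$-invariant, is exactly the asserted $L^2(\mathcal{C}_\varphi)$-bound (and it is uniform in $h$). I expect the pseudolocality step to be the main obstacle: one must make precise that $a(hD)$, although only a Calder\'on--Vaillancourt-class operator, has Schwartz kernel essentially supported within an $O(1)$-neighbourhood of the diagonal, uniformly in $h$.
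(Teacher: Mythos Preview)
Your argument is correct and follows essentially the same route as the paper: lift to $\mathbb{R}^n$, decompose the $\varphi$-periodic extension into $\mathbb{Z}$-indexed pieces via the $\varphi$-action, and control the far pieces by pseudolocality. The paper's version is more compressed --- it uses sharp cutoffs $\mathbbm{1}_{\mathbb{R}^{n-1}\times[0,L)}$ rather than a smooth partition, writes the pieces as operators $A_k=\mathbbm{1}\,(\varphi^{-k})^*a(hD)\,\mathbbm{1}$ rather than translates of a fixed $w_0$, and obtains the decay $\lVert A_k\rVert=O(h^\infty\langle k\rangle^{-\infty})$ by the same integration by parts in $\xi$ you phrase via commutators; the step you flag as the obstacle (passing from ``$(x-y)^\alpha$ times the kernel is $L^2$-bounded'' to ``$\chi_1 a(hD)\chi_2$ is small'') is handled in the paper by viewing $h^{2N}|x-y|^{-2N}\chi_1(x)\chi_2(y)(-\Delta_\xi)^N a(\xi)$ as an amplitude and invoking the amplitude-to-symbol reduction \cite[Theorem 4.20]{Zw}.
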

    \begin{proof}
        We follow the method of \cite[Theorem 5.5]{Zw}. Assume w.l.o.g. that $\Omega = \mathbb{R}^{n-1}$ and let $u \in L^2(\mathcal{C}_{\varphi})$. Then by a computation similar to the Proposition above, we obtain
        \begin{equation}\label{eq:expansion}
            a(hD) u(x) = \sum_{k \in \mathbb{Z}} A_ku(x), \quad x \in \mathbb{R}^{n-1} \times [0, L),
        \end{equation}
        where we write
        \[A_k = \mathbbm{1}_{\mathbb{R}^{n-1} \times [0, L)} (\varphi^{-k})^* a(hD) \mathbbm{1}_{\mathbb{R}^{n-1} \times [0, L)}.\]
        We use the notation $\mathbbm{1}_{S}$ for the characteristic function of a set $S$. Now we claim that for $|k| \geq 2$ \[\lVert{A_k}\rVert_{L^2(\mathcal{C_{\varphi}}) \to L^2(\mathcal{C_{\varphi}})} = O(h^\infty \langle{k}\rangle^{-\infty})\]
        as $h\to 0$, with a constant uniform in $k$. To prove this, notice that for any $N \in \mathbb{N}$ we have
        \[e^{\frac{i}{h}((\varphi^{-k})^*x - y) \cdot \xi} = h^{2N} |(\varphi^{-k})^*x - y|^{-2N} (-\Delta_\xi)^N e^{\frac{i}{h}((\varphi^{-k})^*x - y) \cdot \xi}. \]
        Using this formula, we may write $A_k = \mathbbm{1}_{\mathbb{R}^{n-1} \times [0, L)} (\varphi^{-k})^* \widetilde{A}_k \mathbbm{1}_{\mathbb{R}^{n-1} \times [0, L)}$, where
        \[\widetilde{A}_k v (x) = \frac{1}{(2\pi h)^n} \int_{\mathbb{R}^n} \int_{\mathbb{R}^n} \widetilde{a}_k(x, y, \xi) e^{\frac{i}{h} (x - y) \cdot \xi} v(y) dy d\xi.\]
        Here we introduced
        \[\widetilde{a}_k(x, y, \xi) = h^{2N} |x - y|^{-2N} \chi \circ \varphi^k(x) \chi(y) (-\Delta_\xi)^N a(\xi).\]
        Also, we write $\chi \in C^\infty$ for the cut-off such that $\chi = 1$ near $\mathbb{R}^{n-1} \times [0, L]$ and zero outside $\mathbb{R}^{n-1} \times [-L, 2L]$. Now by \cite[Theorem 4.20]{Zw} we may write $\widetilde{A}_k = b_k(x, hD)$ for a decaying symbol $b_k$ and then the boundedness of $b_k(x, hD)$ on $L^2(\mathbb{R}^n)$ gives the claim. The main result then follows from the expansion \eqref{eq:expansion}.
    \end{proof}
    
    Now by using Proposition \ref{prop:L^2bound} and using the standard theory on $\mathbb{R}^n$, we may obtain the usual properties of semiclassical measures on the mapping torus $\mathcal{C}_{\varphi}$: existence under an $L^2$ bound, properties of the support and invariance under flow if a suitable equation is satisfied.
	
	\section{Billiard dynamics on polyhedra}\label{sec:billiard_dynamics}

	In this section we discuss dynamical properties of the billiard flow on polyhedra. In the first two parts, we give basic definitions of the objects under study and revise the known results. In the third part, we prove a decomposition of non-singular directions on arbitrary polyhedra into finitely many tubes, i.e. the ``finite tube condition''. Finally, we prove a property of maximal periodic tubes in a polyhedron that generalises the Property \eqref{eq:P2} from the introduction. 
	
	\subsection{Billiard dynamics preliminaries}\label{subsec:bill_dyn_def}
	Let $P \subset \mathbb{R}^n$ be a polyhedron and let us define the singular set $\mathcal{S}$ as the union of all $(n-2)$-dimensional ``edges'' of the polyhedron - in other words, the $(n-2)$-skeleton of $\partial P$. This represents the higher dimensional analogues of what are ``corners'' in polygonal billiards. Let $\mathcal{S} \subset U \subset P$ be an open neighbourhood of the singular set. The billiard flow on $P$ is the usual one, a particle (or a point mass) travels to $P \setminus \mathcal{S}$ with unit velocity, and then on striking one of the faces of the boundary, instantaneously changes direction according to the law of light reflection and continues along the reflected line. Trajectories which strike a singular point stop right there - such trajectories are also called singular. If the above does not happen, then the motion is determined for all time. Now, we introduce some notations and definitions, where we largely follow the exposition in \cite{GKT}.

    \label{page2}
	Let us denote $\Gamma := \pa P$, and let $T\Gamma$ be the set of all unit tangent vectors with base points in $\Gamma$ and which are directed inside $P$. Now, define the regular (non-singular) part of $T\Gamma$ as $T\Gamma_1 := \{ x\in T\Gamma : \text{  the forward orbit of } x \text{  never hits }  \mathcal{S}\}$. Denote by $f$ the first return (Poincar\'e) map of the billiard flow to the set $T\Gamma$. Then $f$ and its iterates are defined and smooth everywhere except for the vectors whose billiard orbits hit $\mathcal{S}$, which is a set of measure zero. Let the polyhedron $P$ have $l$ faces denoted by $\mathcal{F}_1, \dotso, \mathcal{F}_l$. 
	Define $\Sigma^{+}_l := \{ 1, 2, ..., l\}^{\mathbb{N}}$ to represent the set of all forward strings for the symbolic dynamics of the billiard flow.

	
	Also, given a trajectory starting from $x \in T\Gamma_1$, the symbolic string for the forward orbit is given by $w(x)$, defined by $w(x)_i = j$ iff the basepoint of $f^i(x)$ lies in $\Cal{F}_j$. This gives us the symbolic encoding $\Sigma^+_P := \{ w \in \Sigma^+_l : \exists x \in T\Gamma_1 \text{  such that  } w = w(x)\}$, i.e., the set of all possible observable infinite strings. For each such possible string $w \in \Sigma^{+}_P$, we define $X(w) := \{x \in T\Gamma_1 : w(x) = w\}$. In other words, $X(w)$ represents all tangent vectors whose billiard trajectories have the same symbolic representation $w$. An arbitrary element of $X(w)$ is denoted by $x(w)$.
	
	We also take the space to make the following important distinction between {\em rational} and {\em irrational} polyhedra. As previously remarked, most of the literature focuses on dimension $n = 2$, where the definition of rational (irrational) polyhedra can simply be given in terms of the rationality (irrationality, respectively) of the angles at the vertices, namely, the polygon is called {\em rational} if all its angles are rational, otherwise it is called {\em irrational}. For reasons which will become clear in the course of this paper, many statements which we will concern ourselves with are easier to prove for rational polygons than irrational ones. 
	In dimensions $n \geq 3$, there does not seem to be a standard definition of rational/irrational polyhedra. We use the following definition (see \cite{B2, B3}):
	
	\begin{definition}\label{def:irrationalpoly}
		Let $P \subset \RR^n$ be a polyhedron and let $\rho_i$ represent the linear reflection determined by the $i^{\text{th}}$-face of $P$. Then $P$ is called rational if the group $G$ generated by the $\rho_i$ is finite, otherwise the polyhedron is called irrational.
	\end{definition}
	
	A crucial geometric difference exists between rational/irrational polyhedra, which we explain below. Let us first recall the well-known tool or method of unfolding a trajectory. Let $\gamma$ be a billiard trajectory of $x \in T\Gamma_1$. Starting from an initial point, we follow $\gamma$ until it strikes a face, say, $\mathcal{F}_1$ of $P$. Then we reflect $P$ about $\mathcal{F}_1$, and keep following $\gamma$ inside the reflected polyhedron until it strikes another face, whence we reflect the polyhedron again. Continuing this process indefinitely gives a sequence $P^\infty = P^0 := P, P^1, P^2,...., P^m,...$ of polyhedra which are skewered on the forward ray determined by $\gamma$. We call this object an \emph{infinite corridor} or an \emph{unfolding} along the ray $\gamma$. Obviously, on refolding the corridor the line folds back (immerses) to $\gamma$. 
	
	\begin{rem}\rm
	    We bring to readers' attention that in Definition \ref{def:irrationalpoly}, the reflections in faces are taken to be linear maps, whereas in the definition of a corridor, we take them as \emph{affine} maps. In the remainder of the paper, this distinction will be addressed in the notation.
	\end{rem}
	
	We record the following property related to unfolding: a symbolic encoding uniquely determines a direction.
	\begin{lemma}\label{lem:symbol_same_direction_same}
		In any polyhedron $P \subset \RR^n$, if $w(x) = w(y)$, then $x$ and $y$ are parallel vectors.
	\end{lemma}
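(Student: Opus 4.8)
The plan is to use the unfolding construction directly. Given $x \in T\Gamma_1$, unfold its billiard trajectory $\gamma$ to obtain the infinite corridor $P = P^0, P^1, P^2, \dots$, where $P^{m+1}$ is the reflection of $P^m$ in the appropriate face (as an affine map). The symbolic string $w(x)$ records exactly which face of $P^m$ the (unfolded, straight) ray crosses at the $m$-th step. The key observation is that the corridor $P^0, P^1, \dots$ is determined purely combinatorially by the string $w(x)$: knowing which face is hit at each step tells us which affine reflection to apply, hence determines each $P^m$ as a subset of $\RR^n$, independently of the particular trajectory. Therefore if $w(x) = w(y) =: w$, the two unfolded trajectories are straight rays living inside the \emph{same} sequence of polyhedra $P^m$, each ray crossing the same sequence of faces $F^{(0)}, F^{(1)}, F^{(2)}, \dots$ (where $F^{(m)} \subset \pa P^m$).

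Next I would extract the direction. Fix the sequence of hyperplanes $H^{(m)} := \mathrm{aff}(F^{(m)})$ carrying these faces in the corridor. The unfolded ray for $x$ passes through $H^{(0)}, H^{(1)}, \dots$ in order, and similarly for $y$. Let $v_x, v_y$ be the (unit) direction vectors of the two unfolded rays. I claim $v_x$ is parallel to $v_y$. The cleanest way: consider two consecutive intersection points of the unfolded ray of $x$, say $p^{(m)} \in H^{(m)}$ and $p^{(m+1)} \in H^{(m+1)}$; then $v_x$ is a positive multiple of $p^{(m+1)} - p^{(m)}$. Since $x \in T\Gamma_1$ the trajectory never stops, so it crosses infinitely many hyperplanes and we always have such pairs. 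Now the hyperplanes $H^{(m)}$ are fixed once $w$ is fixed. If the direction of the ray through a given corridor — i.e. the direction that produces this exact sequence of faces/hyperplane-crossings — were not unique up to scaling, then perturbing the direction slightly at the base point would still produce the same string, contradicting... actually this needs care. The robust argument is: suppose $v_x \neq v_y$ (not parallel). Consider the segment of each unfolded ray between $H^{(0)}$ and $H^{(N)}$ for large $N$. Because the faces $F^{(m)}$ are bounded (they are faces of copies of the bounded polyhedron $P$), and the two rays start within the bounded region $P^0$, but travel in non-parallel directions, the accumulated transverse displacement grows linearly in $N$ while the diameter of each $P^m$ stays bounded by $\mathrm{diam}(P)$. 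Hence for large $N$ the two rays cannot both pass through $F^{(N)}$ — contradiction, since both must cross $F^{(N)}$ (same symbol $w_N$).

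So the main step, and the place requiring the most care, is this last linear-growth-versus-bounded-diameter argument — i.e. showing quantitatively that non-parallel rays through the same corridor must eventually exit different faces. One has to set this up properly: write $p^{(m)}_x, p^{(m)}_y$ for the crossing points of the two rays with $H^{(m)}$; both lie in $F^{(m)}$, so $|p^{(m)}_x - p^{(m)}_y| \le \mathrm{diam}(F^{(m)}) \le \mathrm{diam}(P)$ for all $m$ (using that $P^m$ is an isometric copy of $P$). On the other hand, reconstruct each ray from its first two crossings: the ray of $x$ is the line through $p^{(0)}_x$ with direction $v_x$, and $p^{(m)}_x$ is obtained by intersecting this line with $H^{(m)}$; similarly for $y$. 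If $v_x \not\parallel v_y$, one estimates $|p^{(m)}_x - p^{(m)}_y|$ from below — the difference of the two parametrized rays grows, and since the crossing "times" are comparable (each step advances the ray by an amount bounded above and below in terms of $\mathrm{diam}(P)$ and the inradius of $P$, because consecutive hyperplane crossings are faces of a copy of $P$), the transverse separation is eventually larger than $\mathrm{diam}(P)$, a contradiction. This forces $v_x \parallel v_y$, and since both are unit vectors directed into $P$, either they are equal or one must check the orientation; in fact $w(x)=w(y)$ pins down the orientation too (the ray goes forward through $F^{(0)}, F^{(1)}, \dots$), so $v_x = v_y$. Being careful that "the forward orbit never hits $\mathcal S$" guarantees the string is genuinely infinite and all the crossings above exist and are transverse.
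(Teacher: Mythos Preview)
Your proposal is correct and follows essentially the same approach as the paper's proof: unfold along the common corridor determined by $w$, then use that non-parallel rays separate linearly while each $P^m$ has diameter at most $\diam(P)$, forcing the symbolic codings to eventually differ. The paper compresses this into three sentences, but your more detailed version (making explicit that the corridor is determined combinatorially by $w$, and spelling out the linear-growth-versus-bounded-diameter contradiction) is exactly the intended argument.
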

    \begin{proof}
    	Unfold the polyhedron $P$ along trajectories determined by $x$ and $y$. Unless they are parallel, the distance between their trajectories grows linearly, and cannot be contained in the same corridor. The first time they move into different corridors, their symbolic representations $w(x)$ and $w(y)$ must also differ.
    \end{proof}
    
     We introduce the notation $D := (P \sqcup \sigma P)/\sim$ to denote the double of $P$, where $\sigma$ denotes a reflection of $\mathbb{R}^n$; $\sigma P$ is the copy of $P$ under this reflection and $\sim$ glues the boundaries of $P$ and $\sigma P$ by a pointwise identification of the corresponding faces in $\partial P$ and $\partial (\sigma P)$.
    
    This space plays the role of Euclidean surfaces with conical singularities (ESCS) as outlined in \cite{HHM}. The singular set $\mathcal{S} \subset D$ is now of codimension at most $2$ and the space $D_0 := D \setminus \mathcal{S}$ can be given a structure of an open Euclidean $n$-manifold. Observe that in certain cases, taking finite covers can further reduce the singular set. For example, in the case $n = 3$ a polyhedron $P$ might contain a vertex $v$ or an edge $e$ which are such that forming an $m$-fold cover $\tilde{P}$ around them converts them into removable singularities on $\tilde{P}$. This is the three-dimensional analogue of polygons having an angle of $\frac{2\pi}{m}$ at a vertex. In such cases, one can work with $\tilde{P}$ (and its double) instead of $P$ (and $D$), and the claim of Theorem \ref{thm:eigenf_conc} can be sharpened by replacing $\mathcal{S}$ by $\mathcal{S} \setminus \{v\}$ (or $\mathcal{S} \setminus \{e\}$, as the case may be). As an example, if $P$ is a rectangular parallelepiped, one can sharpen the statement of Theorem \ref{thm:eigenf_conc} to say that any neighbourhood of one of its vertices and the three edges emanating from it contain a certain fraction of the mass.\footnote{Note that we may write down eigenfunctions explicity as products of sines and cosines in this case, but there is no equidistribution in the high energy limit.} 

    Finally, we introduce some more notation for the properties of the period, different types of tubes, etc. Given a set $U \subset \mathbb{R}^{n - 1}$ and a local isometry $F: U \times \mathbb{R} \to D_0$ , we call $F(U \times \mathbb{R})$ an \emph{immersed tube} or just a \emph{tube}. We will often identify $F$ with its image $T:= F(U \times \mathbb{R})$. 
    We also call $U$ the \emph{cross-section} of the tube $T$. 
    A \emph{lifted tube} is the image of $T$ in the unit sphere bundle $SD_0$, determined by the unit vector in the positive direction of the tube. 
    
    Clearly, an immersed tube may be specified by a subset $Q$ of $T\Gamma_1$, consisting of parallel vectors whose base points form a convex set on one of the faces of the polyhedron. In dimension $n = 3$ in particular, the tube is {\em polygonal} ({\em elliptical}) if there is an open polygon (ellipse) $V$ such that $V \subset Q \subset \overline{V}$. Given a point $x \in Q$, the image in an unfolding of the tube $T = T(x)$ that is generated by $Q$ is denoted by $T^\infty(x) \subset P^\infty \subset \mathbb{R}^n$. Depending on the use which will be clear from the context, sometimes we simply write $T^\infty$ instead of $T^\infty(x)$. Unless otherwise stated, we will always assume $Q = X(w)$ for some $w \in \Sigma_P^+$ and we will refer to such tubes $T(x)$ or $T^\infty(x)$ as \emph{maximal tubes}, since they cannot be enlarged (see Proposition \ref{prop:maximaltube} below). We now formally define periodic tubes in our context. 
    
    \begin{definition}\label{def:periodic_tube}
    Let $U \subset \RR^{n - 1}$ be and let $F : U \times \RR \to D_0$ be a local isometry. Then an immersed tube $F(U \times \RR)$ is called \emph{periodic} if there is a positive number $L$ and a rotation $\mathcal{R}$ in $\mathbb{R}^{n}$ fixing the $\mathbb{R}$ direction, such that $F(x, t + L) = F(\mathcal{R} x, t)$ for all $x \in U$ and $t \in \mathbb{R}$. 
    \end{definition}
    We refer to $L$ as the length of the tube, which is also a period of the closed geodesic given by $F(\{x_0\} \times \mathbb{R})$, where $x_0 \in U$ is the centre of mass of $U$. We call $F(\{x_0\} \times \mathbb{R})$ the \emph{central geodesic} of $T$ and $\mathcal{R}$ the rotation \emph{associated to the tube $T$}. Note that periodicity of an immersed tube $T$, as defined above, is not the same as saying that all the parallel billiard trajectories contained in the tube are {\em individually} periodic.
    
    We gather a few basic properties of maximal periodic tubes in a proposition

    \begin{prop}\label{prop:maximaltube}
        The cross-section of a \emph{maximal} tube $T$ is convex. Every trajectory on the boundary of $T$ comes arbitrarily close to the singular set.
    \end{prop}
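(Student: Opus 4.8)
I sketch the approach I would take.

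The plan is to analyse a maximal tube $T = T(x)$, with $Q = X(w)$, through its unfolding. Write $v$ for the common direction of the parallel billiard trajectories constituting $T$ (they are parallel by Lemma~\ref{lem:symbol_same_direction_same}), fix the hyperplane $v^{\perp}$ orthogonal to $v$, and for $y \in v^{\perp}$ let $\ell_y := y + \mathbb{R}v$. Unfolding the word $w$ yields the corridor $P^{0}, P^{1}, P^{2}, \dots$ of affine (indeed isometric) copies of the convex polyhedron $P$, with $P^{m}$ and $P^{m+1}$ glued along the ``gate'' facet $g_{m} := P^{m}\cap P^{m+1}$, which is a copy of the face $\mathcal{F}_{w_{m+1}}$. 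The central geodesic crosses each $g_m$ transversally, so $v$ is not tangent to the supporting hyperplane $H_m$ of $g_m$, and hence $\Psi_m : y\mapsto \ell_y\cap H_m$ is an affine isomorphism $v^{\perp}\to H_m$.

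For the first assertion I would identify the cross-section of $T$ with $U = \bigcap_{m\ge 1} D_m$, where $D_m$ denotes the set of $y\in v^{\perp}$ such that $\ell_y$ passes in turn through $P^{0},\dots,P^{m}$, crossing each of $g_0,\dots,g_{m-1}$ through its relative interior. That a point of $\bigcap_m D_m$ corresponds, after refolding, to an element of $X(w)$ (and conversely) uses convexity of each $P^{i}$: it forces the chord of $\ell_y$ between its crossings with $g_{i-1}$ and $g_i$ to lie in $\overline{P^{i}}$, so that concatenating these chords recovers an honest billiard trajectory realizing $w$ and missing $\mathcal{S}$. Each $D_m$ is convex: ``$\ell_y$ meets the relative interior of $g_i$'' is the $\Psi_i$-preimage of the convex set $\operatorname{relint} g_i$, while ``$\ell_y$ passes through $P^0,\dots,P^m$ in turn'' adds only finitely many linear inequalities in $y$ ordering the crossing points $\ell_y\cap H_0,\dots,\ell_y\cap H_{m-1}$ along $\ell_y$. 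Hence $U = \bigcap_m D_m$, an intersection of convex sets, is convex (replace it by its relative interior if a relatively open cross-section is wanted). This also yields the maximality assertion made after Definition~\ref{def:periodic_tube}: any parallel trajectory with word $w$ already lies in $U$.

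For the second assertion, let $\gamma_0$ be a trajectory on the boundary of $T$, i.e.\ one corresponding to some $y_0 \in \overline{U}\setminus U$, and suppose towards a contradiction that $\gamma_0$ stays at distance $\ge \varepsilon_0 > 0$ from $\mathcal{S}$ for all time. Then $\gamma_0 \in T\Gamma_1$, its word $w(\gamma_0)$ is defined, and every bounce of $\gamma_0$ lies in the relative interior of a face. Choose $y_k \to y_0$ with $y_k \in U$, so $w(y_k) = w$. Fixing $N$, continuity of the billiard map (smooth away from $\mathcal{S}$) over $N$ steps shows that for $k$ large $\gamma_{y_k}$ reflects off the same faces as $\gamma_0$ during its first $N$ bounces; hence $w(\gamma_0)$ and $w$ agree in their first $N$ entries, and letting $N\to\infty$ yields $w(\gamma_0) = w$. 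Thus $\gamma_0\in X(w)$, i.e.\ $y_0\in U$, contradicting $y_0\in\overline U\setminus U$. Hence $\gamma_0$ comes arbitrarily close to $\mathcal{S}$. (The unfolding picture in fact shows more: $y_0\in\overline U\setminus U$ forces some constraint defining one of the $D_m$ to be saturated at $y_0$, so $\ell_{y_0}$ passes through $\partial g_i$ or through $g_i\cap g_{i+1}$ for some $i$, both contained in $\mathcal{S}$; so $\gamma_0$ literally hits $\mathcal{S}$.)

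The step I expect to be the main obstacle is the identification $U = \bigcap_m D_m$ together with the convexity of the sets $D_m$: this is exactly where convexity of $P$, hence of each $P^m$, is indispensable, as it is what forces a line crossing two consecutive gates $g_{i-1}$ and $g_i$ to traverse the intervening copy $\overline{P^{i}}$ along the chord joining those crossing points. Granting that, the first assertion is elementary convex geometry, and the second follows from continuity of the symbolic coding away from $\mathcal{S}$ together with the maximality $U = X(w)$.
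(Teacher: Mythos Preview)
Your proposal is correct and follows essentially the same approach as the paper's proof: both establish convexity via the unfolding by observing that the cross-section is cut out by the convex constraints of hitting each successive face in its interior (the paper phrases this as ``the segment $[x,y]$ hits the same faces'', you make it explicit as $U=\bigcap_m D_m$), and both prove the second claim by contradiction, showing that a boundary trajectory staying away from $\mathcal{S}$ would itself belong to $X(w)$ and hence to the interior of the tube. Your write-up is more detailed than the paper's two-paragraph argument, and your continuity argument for $w(\gamma_0)=w$ via finite prefixes is slightly cleaner than the paper's ``enlarge by a neighbourhood $V$'' formulation, but the ideas are identical.
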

    \begin{proof}
        Let $\Omega$ be the cross-section of $T$. Consider two points $x, y \in \Omega$ and the unfolding $T^\infty$. By Lemma \ref{lem:symbol_same_direction_same}, trajectories determined by $x$ and $y$ hit the same face each time, so by the convexity of the faces of $P$, the trajectories determined by the segment $[x, y]$ always hit the interior of the faces.
        
        For the second claim, assume the trajectory determined by $x \in \partial \Omega$ is at a distance $\varepsilon > 0$ away from the singular set. Thus there is a neighbourhood $V$ of $x$, whose trajectories do not hit the singular set. Then the tube $T'$ with the cross-section $\Omega' := \Omega \cup V$ contradicts the maximality of $T$.
    \end{proof}
    
    If $n = 3$, we refer to the immersed periodic tube $T$ as a \emph{rational tube} if $\mathcal{R}$ is a rotation in a rational multiple of $\pi$ and as an \emph{irrational tube} otherwise. Of course, if the rotation $\mathcal{R}$ is by a rational multiple of $\pi$, then there exists some $\tilde{L}$ such that $F(x, t + \tilde{L}) = F(x, t)$. In other words, each parallel billiard trajectory contained in the tube is now individually periodic. For irrational tubes, this is clearly not the case, and only the central geodesic is periodic. 
    
    Given a periodic trajectory $\gamma: \mathbb{R} \to P$, we will distinguish between a \emph{period} and the \emph{minimal period}, the latter being the minimal $T_0 > 0$ such that $\gamma(t + T_0) = \gamma(t)$ for all $t$ and the former being any such $T_0$.
    
     \begin{rem}\rm
        Let us relate periodic tubes with almost periodic boundary conditions \eqref{eq:boundarycondu} defined in the previous section. Let $f \in C^\infty(D_0)$ and take a periodic tube $F: \Omega \times \mathbb{R} \to D_0$ of length $L$ and rotation $\mathcal{R}$. Consider the pullback $F^*u$ to $\Omega \times \mathbb{R}$. By definition, we have
        \[F^*u(x, t + L) = F^*u(\mathcal{R}x, t), \quad (x, t) \in \Omega \times \mathbb{R},\]
        which by Proposition \ref{prop:iso_adm} means that $F^*u$ satisfies the required condition.
    \end{rem}

    \subsection{A revision of known results}\label{subsec:dynamicsrevision}
    
    Here we collect the relevant preliminary results on polyhedral dynamics that will be of use later. For completeness, we have provided proofs in Appendix \ref{app:A}. The proofs are quite instructive and we encourage the reader to go through them.
    
	Consider a polyhedron $P$. As mentioned before, in higher dimensions, depending on the type of the polyhedron,  an orbit generated by $x \in T\Gamma_1$ with a periodic symbol $w(x)$ might or might not be periodic. This is in contrast to the polygon case (see Theorem \ref{thm:period_traj'}). Now, observe that if the forward closure of the orbit of $x$, called $\gamma$ henceforth, does not intersect $\mathcal{S}$, then $\gamma$ can be ``thickened'' to form a tubular neighbourhood $T$ around $\gamma$ such that each trajectory in $T$ parallel to $\gamma$ also has the symbolic representation $w(x)$. It turns out that such tubes $T$ can themselves be periodic (see below).
	
	With that in place, we have the following result proved in \cite[Theorem 5]{GKT}

    \begin{theorem} \label{th:general-dynamics}
		Let $P \subset \RR^n$ be an arbitrary convex 
		polyhedron and $w \in \Sigma^+_P$ is a periodic sequence with minimal period $k$. The following hold:
		
		\begin{enumerate}
			\item There exists $x(w)$ so that $x(w)$ is periodic with minimal period $k$.
			\item In addition, one of the following two cases holds:
			\begin{enumerate}
				\item There exists $q \geq 1$ such that all $y(w) \in X(w) \setminus x(w)$ are periodic with period $qk$ and the cross-section of the tube generated by $X(w)$ is an open polyhedron.
				\item The set $X(w)$ generates a periodic tube with a convex cross-section $\Omega \subset \mathbb{R}^{n - 1}$ and an associated isometry $\mathcal{R}_0 \in O(n - 1)$ keeping $\Omega$ invariant.\footnote{ By slightly abusing the notation, we extend here Definition \ref{def:periodic_tube} to include Poincar\'e maps in $O(n-1)$. Note that at the cost of doubling the length of the periodic tube, we may always assume this map is in $SO(n-1)$.}
			\end{enumerate}						
			\item  If $n = 2$ or $3$ and $k$ is odd, then only the first case {\rm (2).(a)} above can happen and $q = 2$.
			\item If $P$ is rational then only the first case {\rm (2).(a)} above can happen.  			
		\end{enumerate}
	\end{theorem}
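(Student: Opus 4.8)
The plan is to reduce everything to the classical \emph{unfolding} construction. I would fix one trajectory realising the code $w$ and unfold along it, producing the corridor $P^0 = P, P^1, P^2, \dots$ of consecutive reflected copies of $P$ skewered on a line $\ell_0$ pointing in a fixed direction $v$. Since $w$ is $k$-periodic, the combinatorial data of the corridor is $k$-periodic, so there is an affine isometry $g$ of $\RR^n$ with $g(P^i) = P^{i+k}$ for all $i \geq 0$, whose linear part is the length-$k$ word $Dg = \rho_{c_0}\rho_{c_1}\cdots\rho_{c_{k-1}} \in G$ in the reflection generators ($c_i$ being the face labels along the corridor). The first point to establish is that $Dg$ fixes $v$: this is because $g$ carries the corridor onto a sub-corridor, while a corridor has transverse width at most $\diam P$ but infinite length along $v$, so $g$ can preserve it only if $Dg(v) = v$. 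Writing $H_0 := v^\perp \cong \RR^{n-1}$, $\mathcal{R}_0 := Dg|_{H_0} \in O(n-1)$, and using coordinates $\RR^n = H_0 \times \RR v$, this gives $g(y,s) = (Ay,\, s+\tau)$ with $\tau > 0$ the length of one $k$-block of $\ell_0$ and $A$ an affine map of $H_0$ with linear part $\mathcal{R}_0$.

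Next I would pin down the maximal tube. The set $X(w)$ corresponds, in the unfolding, to the lines $\{y_0\} \times \RR v$ with $y_0$ in the cross-section $\Omega := \bigcap_{i \geq 0} \pi(\intt P^i) \subset H_0$, where $\pi$ is the projection along $v$; one checks that such a line reflects off precisely the faces prescribed by $w$ and never meets $\mathcal{S}$ (using convexity of $P$ and that consecutive copies are glued along entire faces), and conversely that every element of $X(w)$ arises this way. Then $\Omega$ is non-empty (as $w \in \Sigma^+_P$), bounded, and convex, this last fact being exactly Proposition \ref{prop:maximaltube}. For part (1): since $g$ permutes the corridor, $A$ preserves $\Omega$, hence $A$ fixes the centroid $y_0$ of $\Omega$; then $\{y_0\}\times\RR v$ is $g$-invariant and refolds to a periodic trajectory $x(w)$ that closes up after $k$ reflections. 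Its minimal period is exactly $k$, because a strictly smaller geometric period would divide $k$ and drop the symbolic period of $w$ below $k$. Translating coordinates so that $y_0$ is the origin, $A = \mathcal{R}_0$ is linear and $\mathcal{R}_0(\Omega) = \Omega$.

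For part (2) I would split on whether $\mathcal{R}_0$ has finite order in $O(n-1)$. If $\mathcal{R}_0^q = \id$, then $g^q$ has trivial linear part and so is the pure translation by $q\tau v$; hence $P^{i+qk} = P^i + q\tau v$, the whole corridor is translation-periodic, and therefore \emph{every} line $\{y_0\}\times\RR v$ with $y_0 \in \Omega$ refolds to a trajectory periodic with period $qk$, while $\Omega = \bigcap_{i=0}^{qk-1}\pi(\intt P^i)$ collapses to a finite intersection of open convex polytopes, i.e.\ an open polyhedron --- this is case (2).(a). If $\mathcal{R}_0$ has infinite order, then refolding the cylinder $\Omega \times \RR v$ exhibits the tube generated by $X(w)$ as a periodic tube in the sense of Definition \ref{def:periodic_tube}, with length $\tau$, convex cross-section $\Omega$, and associated isometry $\mathcal{R}_0$ --- case (2).(b) (replacing $\tau$ by $2\tau$ if one wants $\mathcal{R}_0 \in SO(n-1)$, as in the footnote). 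Parts (3) and (4) then fall out of the identity $Dg = \rho_{c_0}\cdots\rho_{c_{k-1}}$: if $P$ is rational then $G$ is finite, so $\mathcal{R}_0$ has finite order and we land in case (2).(a); and if $n \in \{2,3\}$ with $k$ odd then $\det Dg = (-1)^k = -1$, so $\det \mathcal{R}_0 = -1$, making $\mathcal{R}_0$ a nontrivial improper isometry of a space of dimension $\leq 2$, hence an involution, and case (2).(a) holds with $q = 2$.

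The step I expect to cause the most trouble is the bookkeeping identifying the maximal tube $X(w)$ with $\bigcap_i \pi(\intt P^i)$ --- verifying both inclusions, handling trajectories that approach but never hit $\mathcal{S}$, and checking that in the finite-order case the infinite intersection genuinely becomes a finite one --- together with a clean justification that $Dg$ fixes $v$. These are precisely the places where the dimension $n$ could a priori interfere, and both are carried out carefully in Appendix \ref{app:A}, following \cite{GKT}.
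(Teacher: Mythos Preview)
Your argument follows the same unfolding strategy as the paper's proof in Appendix~\ref{app:A}, with one noteworthy difference: to produce the periodic orbit $x(w)$, the paper extends $f^k$ continuously to the compact convex set $\overline{X(w)}$ and invokes Brouwer's fixed point theorem, whereas you take the centroid of the cross-section $\Omega$ as the fixed point of $A$. Both arguments hinge on convexity of $X(w)$ (Brouwer needs a topological disc; the centroid must lie inside $\Omega$), and the paper explicitly remarks that this is where non-convex polyhedra would break the proof. Your centroid argument is more elementary and constructive, but it requires $A(\Omega) = \Omega$, while the forward-only intersection $\Omega = \bigcap_{i \geq 0} \pi(\intt P^i)$ a priori only yields $A^{-1}(\Omega) \subset \Omega$; you should add the observation that since $A$ is an affine isometry and $\Omega$ is bounded open convex, the inclusion together with equality of volumes forces $A(\Omega) = \Omega$. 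The remaining pieces --- that $Dg$ fixes $v$, the dichotomy on the order of $\mathcal{R}_0$, the polyhedral cross-section in the finite-order case via a finite intersection, and the deductions of (3) and (4) from $\det Dg = (-1)^k$ and finiteness of $G$ --- are exactly as in the paper, though you organise the case split more transparently than the paper does.
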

	
	\begin{rem}\rm \label{rem:general-dynamics}
	    We remark that in \cite{GKT} a precise statement of the theorem above was given only in the case $n = 3$. In the case $n = 3$, one additionally has that the periodic tubes in $(2)$.(b) are either polygonal or elliptical, or in other words $\Omega \subset \mathbb{R}^2$ is either a convex polygon or a disc. They respectively correspond to rational and irrational periodic tubes.
	    
	    For a general value of $n$, the part $(2)$.(b) of the theorem needs to be modified and we have many more options for the cross-section $\Omega$ of a  periodic tube.  Moreover, $(3)$ now holds only for $n = 2, 3$, since in higher dimensions it is false that the product of three linear reflections in a general position is a linear reflection.
	\end{rem}
    
    We include a proof of Theorem \ref{th:general-dynamics} in Appendix \ref{app:A}. We now discuss a couple of explicit examples in the case $n = 3$. As an example of when $(2)$.(a) above might occur, consider a right prism whose horizontal cross-section is an equilateral triangle. Consider a trajectory which lies on a plane perpendicular to the height of the prism, and strikes an equilateral triangular cross-section exactly at the mid-point of the three sides. This is periodic with minimal period $3$, as well as any vertical translate of such a trajectory, whereas any other $y(w) \in X(w)$ is periodic with minimal period $6$. 
    
    For an example when $(2)$.(b) might occur, it is enough to consider a regular tetrahedron and the closed orbit corresponding to the word $w = (abcd)$, where $a, b, c$ and $d$ encode the faces of the tetrahedron. Then it is possible to show that there is a unique closed orbit $x(w)$, that $X(w)$ generates an elliptical, irrational, periodic tube and the nearby parallel trajectories ``come back'' rotated by an irrational angle. See \cite[Section 8]{B1} for explicit computations. Observe that although $X(w)$ generates an immersed solid torus $T$ with disc cross-section, the faces of $P$ intersect $T$ obliquely, and hence the footprint of $T$ on a face of $P$ looks like an ellipse.
    
    Theorem \ref{th:general-dynamics} shows that the result for the two-dimensional polygonal case in Theorem \ref{thm:period_traj'} does not generalise to higher dimensions in a completely straightforward way. In other words, there exist trajectories whose closure does not contain any singular point, but which are themselves not periodic - this might occur in the case of irrational polyhedra. Moreover, such trajectories are contained in periodic tubes.
    
    However, the following result says that Theorem \ref{th:general-dynamics} contains all such possible obstructions:
    \begin{theorem}\label{th:non-periodic}
        Let $P \subset \mathbb{R}^n$ be a convex polyhedron and $w \in \Sigma_P^+$ non-periodic. Let $x \in X(w)$. Then the closure of the trajectory generated by $x$ intersects the singular set $\mathcal{S}$.
    \end{theorem}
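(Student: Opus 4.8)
The plan is to argue by contraposition: assuming that some $x \in X(w)$ has the property that the closure $\overline{\gamma}$ of its forward trajectory $\gamma$ avoids $\mathcal{S}$, I will show that $w$ is periodic. The first step is to thicken $\gamma$ to a genuine tube. Since $\overline{\gamma}$ is compact and $\mathcal{S}$ is closed, $\varepsilon := \dist(\overline{\gamma},\mathcal{S}) > 0$. Unfold $\gamma$ into an infinite corridor $P = P^0, P^1, \dots$ along the straight ray $\gamma^\infty$. The crucial point is that $\gamma^\infty$ stays uniformly away not only from the codimension $\geq 2$ skeleton of each $P^k$, but from \emph{every facet of $P^k$ that $\gamma^\infty$ does not cross}: if along a subsequence of steps the trajectory came within $\eta_k \to 0$ of a non-crossed facet $\mathcal{F}$, then, since the distance from a point of a straight segment to a fixed hyperplane is affine in the parameter, one of the bounce points of that segment would lie within $O(\eta_k)$ of $\mathcal{F}$; passing to a further subsequence on which all the faces involved are constant, the limit point would lie in the closures of two distinct facets of $P$, hence in $\mathcal{S}$, contradicting $\dist(\overline{\gamma},\mathcal{S}) = \varepsilon$. (The ``grazing'' sub-case, where the segment is parallel to the hyperplane of $\mathcal{F}$, is where convexity of $P$ enters: a trajectory running parallel to and $\eta$-close to a facet must bounce $O(\eta)$-close to the boundary of that facet, which lies in $\mathcal{S}$.) Consequently there is $\varepsilon' > 0$ so that the straight $\varepsilon'$-tube about $\gamma^\infty$ stays inside the corridor; refolding it gives a tube around $\gamma$ all of whose parallel trajectories have code $w$, so $X(w)$ is not a single trajectory and generates a \emph{maximal} tube $T$ with convex cross-section $\Omega$ (Proposition \ref{prop:maximaltube}), with $x$ in the interior of $\Omega$ — were it on $\partial\Omega$, Proposition \ref{prop:maximaltube} would force $\overline{\gamma}$ to meet $\mathcal{S}$.

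Next I would prove that the code $w$ is eventually periodic by a recurrence argument. The successive slices of the thick tube — the $(n-1)$-dimensional pieces where it crosses the faces of $P$, recorded together with their directions — are isometric copies of a fixed ball, all lying in the compact set of inward unit vectors based at distance $\geq \varepsilon/2$ from $\mathcal{S}$; each slice has $(n-1)$-volume bounded below, whereas $P$ has only finitely many faces, each of finite volume. Pigeonholing, some face $\mathcal{F}_m$ is crossed at infinitely many steps and two of the corresponding footprints on $\mathcal{F}_m$ must overlap. The aim is to upgrade such an overlap to an \emph{exact} coincidence of the dynamical state at two steps $i < j$, which yields $w_{i+r} = w_{j+r}$ for all $r \geq 0$, i.e.
\[
w = (w_0 \cdots w_{i-1})\,(w_i \cdots w_{j-1})^{\infty}.
\]
When $P$ is rational this is immediate: the group $G$ generated by the linear reflections in the faces is finite, so the direction in which the tube crosses a given face takes only finitely many values, and among the infinitely many overlapping footprints on $\mathcal{F}_m$ two have exactly the same direction. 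For an irrational $P$ this step is genuinely delicate, and it is here that I would invoke the unfolding argument of \cite{GKT}, reproved in arbitrary dimension in Appendix \ref{app:A}, which combines the above confinement with the rigidity supplied by Lemma \ref{lem:symbol_same_direction_same} (equal codes force parallel directions) and with the structure theorem, Theorem \ref{th:general-dynamics}, applied to the eventual periodic part.

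Finally I would promote eventual periodicity to genuine periodicity using reversibility of the billiard flow. The tail $w_i w_{i+1} \cdots$ is a periodic word in $\Sigma^+_P$, and $f^i(x)$ is a vector with this code whose forward trajectory still misses $\mathcal{S}$; by Theorem \ref{th:general-dynamics} it lies either on a closed billiard trajectory or inside a periodic tube. In both cases its bi-infinite trajectory also misses $\mathcal{S}$ — a closed trajectory coincides with its own backward orbit, and a trajectory $F(\xi,\cdot)$ of a periodic tube stays in the tube for all $t \in \mathbb{R}$ — and its bi-infinite code is periodic: obviously for a closed trajectory, and for a trajectory inside a periodic tube because the relation $F(\xi, t + L) = F(\mathcal{R}\xi, t)$ forces the sequence of faces crossed by $F(\xi,\cdot)$ to repeat with period $L$, even when the trajectory itself does not close up. Since the bi-infinite code of $x = f^{-i}(f^i x)$ is a shift of that of $f^i(x)$, it is periodic as well, so $w$ is periodic, which completes the contraposition. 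The two places where real work is needed are the uniform thickening in the first step (ruling out the trajectory's approach to the faces it does not cross, the parallel grazing case being the subtle one) and, in the second step, the passage from approximate recurrence of the tube's slices to \emph{exact} recurrence of the code in the irrational case; the latter is the main obstacle and is precisely the higher-dimensional counterpart of the two-dimensional analysis of \cite{GKT}.
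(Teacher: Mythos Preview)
Your outline has the right architecture in the rational case, but the crucial irrational step is not a proof: you write that you would ``invoke the unfolding argument of \cite{GKT}, reproved in arbitrary dimension in Appendix \ref{app:A}'', yet Appendix \ref{app:A} \emph{is} the proof of the theorem you are proving, so this is circular. The passage from ``two footprints on the same face overlap'' to ``the dynamical state repeats exactly'' genuinely fails when the group generated by the facial reflections is infinite: overlapping footprints guarantee a common basepoint, but the directions at the two return times can differ, and no pigeonhole on an infinite set of directions forces two of them to coincide. You correctly flag this as ``the main obstacle'', but you do not supply an argument for it, and invoking Theorem \ref{th:general-dynamics} ``applied to the eventual periodic part'' presupposes the very eventual periodicity you are trying to establish.

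The paper's route is quite different and does not pass through eventual periodicity at all. One assumes $\overline\gamma$ misses $\mathcal{S}$, picks a limit point $x^*$ of $\{f^{m_i}x\}$, and observes that if $w$ is non-periodic then the directions of $f^{m_i}x$ and $x^*$ must eventually differ. The heart of the argument is then a \emph{uniform recurrence} lemma (Lemma \ref{lemma:stronglipschitz}): on the closure $Y_\delta$ of the forward orbit of a slightly shrunk cross-section one has a two-sided Lipschitz bound $c\,d(y_1,y_2)\le d(f^my_1,f^my_2)\le C\,d(y_1,y_2)$ uniformly in $m$, and from this one proves (in the spirit of Proposition \ref{prop:iso_adm}) that the set of $k$ with $\dist_{C^0}(f^k|_{Y_\delta},\id)<\eta$ is relatively dense. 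This forces near-singular points on $\partial T^\infty(x^*)$ to recur in every window of some fixed length $L$; since for large $i$ the thick unfolded tube $T^\infty(f^{m_i}x)$ meets $T^\infty(x^*)$ at a small nonzero angle over arbitrarily long stretches, it must eventually contain one of these singular points in its interior --- contradicting maximality. Your promotion step (eventually periodic $\Rightarrow$ periodic via Theorem \ref{th:general-dynamics} and reversibility) is correct and pleasant, but it never gets invoked in the paper's argument because the hypothesis it needs is exactly what you have left open.
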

    
    When $n = 3$, the above result basically says that $X(w)$ is at best a codimension $1$ ``strip'', which includes the case that $X(w)$ consists of a single point. We have included a proof of Theorem \ref{th:non-periodic} in Appendix \ref{app:A}. As a consequence we obtain the following dichotomy:
    
    \begin{corollary} \label{lem:Dichotomy}
    	For any billiard trajectory $\gamma$ in a convex polyhedron $P$, either $\gamma$ is contained in an immersed periodic tube, or the closure of $\gamma$, $C_\gamma$, meets $\mathcal{S}$.
    \end{corollary}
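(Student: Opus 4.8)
The plan is to derive the dichotomy directly from the two structural results established just above, namely Theorem~\ref{th:general-dynamics} and Theorem~\ref{th:non-periodic}, after a reduction to the symbolic picture. Fix a billiard trajectory $\gamma$ in $P$. If $\gamma$ terminates at a point of $\mathcal{S}$, or (running it backwards) emanates from one, then by definition $C_\gamma \cap \mathcal{S} \neq \emptyset$ and there is nothing to prove; so from now on I would assume that $\gamma$ avoids $\mathcal{S}$ for all time. In that case $\gamma$ extends to a complete trajectory defined for all $t \in \mathbb{R}$, and picking a time at which $\gamma$ meets $\partial P$ produces a vector $x \in T\Gamma$ whose forward orbit never hits $\mathcal{S}$, i.e. $x \in T\Gamma_1$; we then set $w := w(x) \in \Sigma_P^+$.

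First I would treat the case that $w$ is non-periodic. Here Theorem~\ref{th:non-periodic} applies verbatim to $x \in X(w)$ and yields that the closure of the forward trajectory generated by $x$ meets $\mathcal{S}$. Since that closure is a subset of $C_\gamma$ (the forward orbit lies on $\gamma$, and $C_\gamma$ is closed), we conclude $C_\gamma \cap \mathcal{S} \neq \emptyset$, which is the second alternative of the dichotomy.

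It remains to treat the case that $w$ is periodic, say with minimal period $k$, where Theorem~\ref{th:general-dynamics} applies. In its case (2).(b), the set $X(w)$ generates a periodic tube with convex cross-section and associated rotation $\mathcal{R}_0$, which is exactly an immersed periodic tube in the sense of Definition~\ref{def:periodic_tube}. In case (2).(a), every element of $X(w)$ gives a trajectory which is periodic with the common period $qk$ (the central one having minimal period $k$, a divisor of $qk$), so $X(w)$ again generates an immersed periodic tube, this time with trivial associated rotation $\mathcal{R} = \id$ and length equal to the arc length traversed in combinatorial time $qk$. In either sub-case, since $x \in X(w)$, the trajectory $\gamma$ is contained in the immersed periodic tube generated by $X(w)$, which is the first alternative. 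This exhausts all cases and completes the proof.

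I expect the only delicate point to be bookkeeping rather than genuine mathematics: one should check carefully that the tube produced in case (2).(a) of Theorem~\ref{th:general-dynamics} really satisfies Definition~\ref{def:periodic_tube} with $\mathcal{R} = \id$, and that in the non-periodic case the closure of the forward orbit invoked from Theorem~\ref{th:non-periodic} is genuinely contained in $C_\gamma$, so that passing from the bi-infinite trajectory $\gamma$ to the one-sided symbolic datum $w(x)$ loses no information relevant to the conclusion.
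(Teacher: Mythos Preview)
Your proof is correct and follows exactly the approach the paper intends: the corollary is stated immediately after Theorems~\ref{th:general-dynamics} and~\ref{th:non-periodic} as a direct consequence, with no separate proof given, and your case split on whether $w(x)$ is periodic is precisely the intended derivation. The only minor point worth tightening is the boundary case in the periodic branch: if $x$ happens to lie on $\partial X(w)$ rather than in its interior, then $\gamma$ lies on the boundary of the maximal tube rather than inside the open immersed tube of Definition~\ref{def:periodic_tube}, but then Proposition~\ref{prop:maximaltube} gives $C_\gamma \cap \mathcal{S} \neq \emptyset$ and the second alternative holds anyway.
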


    \subsection{The finite tube condition}
    
        We begin this section by introducing a condition on the finiteness of   periodic tubes missing $U_\varepsilon$, that will be relevant for our further discussion. Following \cite{HHM}, we have
        \begin{definition}\label{def:tube_cond}
    	    Let $D$ be the double of $P$ as defined above. A region $U \subset D$ is said to satisfy the \emph{finite tube condition} if there exists a finite collection of    periodic tubes $T_i$ for $i = 1, \dotso, N$ for some $N$, such that any orbit that avoids $U$ belongs to some $T_i$.
        \end{definition} 
    
        We will sometimes simply say that $D$ satisfies the \emph{finite tube condition} if every neighbourhood of $\mathcal{S}$ does so. In this section we prove the finite tube condition in full generality.

        \begin{theorem}\label{thm:fintubecond}
            Let $P$ be a convex polyhedron and let $D$ be its double. Then, for any $\varepsilon > 0$, the $\varepsilon$-neighbourhood $U_\varepsilon$ of the singular set $\mathcal{S}$ of $D$ satisfies the finite tube condition.
        \end{theorem}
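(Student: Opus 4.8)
The plan is to argue by contradiction: suppose there exist infinitely many distinct maximal periodic tubes $T_1, T_2, \dots$ in $D_0$, each missing the fixed neighbourhood $U_\varepsilon$ of $\mathcal{S}$. Each such tube $T_i$ is specified, via the doubling construction, by a family of parallel billiard trajectories with a common symbolic encoding $w_i \in \Sigma_P^+$, and (by Corollary \ref{lem:Dichotomy}, together with Theorem \ref{th:non-periodic} and Theorem \ref{th:general-dynamics}) each $w_i$ is periodic. The first step is to show that the symbolic periods $k_i$ of the $w_i$ must be uniformly bounded. The key geometric input is that, because a tube misses $U_\varepsilon$, every trajectory in it stays a definite distance $\varepsilon$ from $\mathcal{S}$ at each reflection; unfolding the tube $T_i^\infty$ into the infinite corridor, the corridor has a definite ``width'' bounded below in terms of $\varepsilon$ (the cross-section $\Omega_i$ contains a ball of radius $\gtrsim \varepsilon$, by Proposition \ref{prop:maximaltube} and the fact that the boundary trajectories would otherwise approach $\mathcal{S}$). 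One then shows that a corridor of width $\gtrsim \varepsilon$ built out of isometric copies of $P$ can only be assembled in finitely many combinatorial ways before it must close up — i.e. there are only finitely many possible ``straight tubes of width $\varepsilon$'' through the fixed polyhedron $P$, hence finitely many possible periodic symbols $w_i$ realised by tubes missing $U_\varepsilon$. This is a packing/volume argument: each unit of corridor length contributes a fixed volume, the reflected copies $P^j$ cannot overlap transversally, and a corridor that is too long with bounded width forces a repeated configuration of $(P^j, \text{direction})$, after which periodicity of the symbol follows and the tube is among finitely many.

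More concretely, I would make the finiteness quantitative as follows. Fix a face $\mathcal{F}$ of $P$ and consider all maximal tubes missing $U_\varepsilon$ whose cross-section sits on $\mathcal{F}$. The direction of such a tube is determined by its symbol (Lemma \ref{lem:symbol_same_direction_same}); conversely, a tube of width $\gtrsim \varepsilon$ issuing from $\mathcal{F}$ in a fixed direction is unique as a maximal tube. So it suffices to bound the number of admissible directions. Unfold: the trajectory travels in a straight line, and at step $j$ it passes through the reflected copy $P^j$ while staying $\varepsilon$-away from the reflected singular set $\mathcal{S}^j$. Since $\mathrm{diam}(P)$ is fixed and $P$ has finitely many faces, the sequence of pairs (isometry class of $P^j$ relative to $P^0$, incoming face) takes values in a finite set; once the corridor is long enough that such a pair repeats at steps $j$ and $j'$ with the trajectory direction preserved, the corridor is periodic and so is $w$. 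Thus every $w_i$ is periodic with period at most some $K = K(P, \varepsilon)$, and there are at most $l^K$ periodic words of period $\le K$, hence finitely many maximal periodic tubes missing $U_\varepsilon$. Setting $T_1, \dots, T_N$ to be this finite list, any orbit avoiding $U_\varepsilon$ lies in a maximal periodic tube by Corollary \ref{lem:Dichotomy} and hence in some $T_i$, which is the finite tube condition.

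The main obstacle I expect is making rigorous the claim that a tube missing $U_\varepsilon$ has cross-section bounded below in size \emph{uniformly} — one must rule out long thin tubes that stay $\varepsilon$-away from $\mathcal{S}$ only because they are very long, and control how the rotation $\mathcal{R}_i$ associated to an irrational tube (case (2).(b) of Theorem \ref{th:general-dynamics}) interacts with the requirement of missing $U_\varepsilon$: a priori the cross-section $\Omega_i$ could be large while the \emph{central} geodesic wanders, or conversely the rotation could shrink the effective corridor over many periods. The resolution is that $\Omega_i$ is genuinely invariant under $\mathcal{R}_i$ (Theorem \ref{th:general-dynamics}(2).(b)) and $\Omega_i \subset \mathbb{R}^{n-1}$ must contain a neighbourhood of the central point on which \emph{no} trajectory hits $U_\varepsilon$, so the distance from $\Omega_i$ to $\mathcal{S}$ being $\ge \varepsilon$ over one full period already forces a lower bound on the inradius of $\Omega_i$; the rotation invariance then propagates this bound to all periods automatically. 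Once this uniform lower bound on the corridor width is in hand, the volume/packing argument and the pigeonhole over finitely many combinatorial corridor configurations close the proof. A secondary (routine) point is to handle the finite covers $\tilde{P}$ reducing $\mathcal{S}$, and the case (2).(a) of polyhedral cross-sections, but these introduce only finitely many additional tubes and do not affect the argument.
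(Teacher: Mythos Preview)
Your pigeonhole argument has a genuine gap. You claim that the pairs (isometry class of $P^j$ relative to $P^0$, incoming face) take values in a finite set, but this fails precisely for irrational polyhedra: the linear parts of the successive reflections generate an infinite subgroup of $O(n)$ (this is the \emph{definition} of irrational, see Definition~\ref{def:irrationalpoly}), so the orientations of the $P^j$ already form an infinite set. Even in the rational case the pigeonhole does not close: a repeated pair (orientation, incoming face) at steps $j$ and $j'$ does \emph{not} force the symbol to repeat, because the unfolded trajectory is a fixed line in $\mathbb{R}^n$ and the copies $P^j$, $P^{j'}$ differ by a translation that is generally not parallel to that line, so the entry points on the face differ and the subsequent faces hit can differ as well. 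The entry point is a continuous parameter that your finite data does not capture. The related volume/packing heuristic also breaks down because in an unfolding the reflected copies $P^j$ typically overlap in $\mathbb{R}^n$, so no disjointness is available. In short, there is no uniform bound $K(P,\varepsilon)$ on the symbolic period to be extracted this way, and indeed the paper never attempts to bound the period.

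The paper's proof is quite different and bypasses any combinatorial bound on the symbol. One assumes infinitely many distinct maximal periodic tubes $T_i$ containing orbits missing $U_\varepsilon$, picks $x_i\in T\Gamma_1$ on each, and passes to a convergent subsequence $x_i\to x$. The limit $x$ again generates an orbit missing $U_\varepsilon$, hence lies in a maximal periodic tube $T$. The key input (Lemma~\ref{lemma:singsetdense}) is that singular points are \emph{dense and uniformly recurrent} on $\partial T^\infty$: any slice of height $C=C(T,\varepsilon)$ of $\partial T^\infty$ contains singular points whose projections to $\partial\Omega$ are $\varepsilon/100$-dense. Since the $x_i$ are eventually non-parallel to $x$, the unfolded tubes $T^\infty(x_i)$ (which have inner radius $\gtrsim\varepsilon$, your observation here is correct) intersect $\partial T^\infty$ in pieces of height tending to infinity, and therefore must eventually contain a singular point in their interior, contradicting maximality. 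So the missing idea is the density/recurrence of $\mathcal{S}$ on the boundary of the limiting tube, not a finiteness of corridor configurations.
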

        \begin{proof}
            Assume the finite tube condition is false, i.e. assume there are points $x_1, x_2, \dotso \in T\Gamma_1$ generating trajectories that stay in the future in $P \setminus U_\varepsilon$. By Theorem \ref{th:general-dynamics}, for each $i \in \mathbb{N}$, the trajectories generated by $x_i$ belong to maximal immersed  periodic tubes $T_i$, and we assume that the $T_i$ are distinct tubes.
            
            By compactness and after relabelling we may assume $x_i \to x$ as $i \to \infty$. By continuity, the trajectory generated by $x$ is neither tangent to a face nor has basepoint in the singular set, and stays in $P \setminus U_\varepsilon$ in the future. Therefore $x$ belongs to a maximal  periodic tube $T$ by Theorem \ref{th:general-dynamics}, where $T$ is the image of a local isometry $F : \Omega \times \RR \to D_0$. Note that for $i$ large enough, each $x_i$ points to a direction not parallel to $x$, by the fact that the tubes $T_i$ are distinct. 
            
            By Lemma \ref{lemma:singsetdense}, there are points $p_1, \dotso, p_N \in \partial \Omega$ that are $\varepsilon/100$-dense, such that for some $t_1, \dotso, t_N$ we have $F(p_i, t_i) \in \mathcal{S}$. By the same lemma
            , such points are uniformly recurrent: there is a $C > 0$, such that any slice $F^{-1}(\mathcal{S}) \cap \partial \Omega \times [x, x+C]$ of height $C$, contains $N$ points whose projections onto $\partial \Omega$ are $\varepsilon/100$-dense. In other words, $\partial T^\infty$ contains singular points whose projections onto the cross-section are $\varepsilon/100$-dense uniformly often.
        
            Consider now $T(x_i)$, i.e. the maximal tube generated by $x_i$. Note that such tubes contain immersed tubes with disc cross-section of radius $\varepsilon$. Taking $i \to \infty$, we obtain that $T^\infty(x_i)$ intersects $\partial T^\infty$ in tubes of increasingly large height. By the construction, such tubes must contain singular points eventually. This contradicts the fact that $T^\infty(x_i)$ do not contain singular points in the interior of their unfolding.
        \end{proof}
        
        
    \subsection{Property 2'.}\label{subsec:P2'} As is discussed in Section \ref{sec:intro}, the key to our proof are appropriate generalisations of Properties \eqref{eq:P1} and \eqref{eq:P2}, together with our main estimate Theorem \ref{thm:cont_per_cond} (see Section \ref{sec:control_theory} below). The generalised Property \eqref{eq:P1} is contained in Theorems \ref{th:general-dynamics} and \ref{th:non-periodic}, so we discuss a suitable substitute of Property \eqref{eq:P2} that we will call Property 2'. In order to state it, let $F: \Omega \times \mathbb{R} \to D_0$ be a maximal periodic tube and call $\Omega_{\varepsilon}$ the complement of the $\varepsilon$-neighbourhood of $\partial \Omega$.
    
    \vspace{-2.5mm}
    \begin{equation}\label{eq:P2'}
        \begin{minipage}{0.8\textwidth}
            \emph{Property 2'}. There exist $\delta > 0$, $l > 0$ and $\eta > 0$, such that for each $(p_0, z_0) \in \big(\Omega_{\varepsilon/2 - \eta} \setminus \Omega_{\varepsilon/2 + \eta}\big) \times \mathbb{R}$, there exists a point $(p_0, z)$ with $|z - z_0| \leq l$, and the $\delta$-neighbourhood of $(p_0, z)$ is contained in $F^{-1}(U_\varepsilon)$.
        \end{minipage}\tag{P2'}
    \end{equation}
    \vspace{0.5mm}
    
    We will prove the Property \eqref{eq:P2'} in this section. We begin by a lemma studying the formation of singularities on the boundary of a  periodic tube; to this end assume $L$ is the length of $T$ and $\mathcal{R}$ the associated rotation.
   
   \begin{lemma}\label{lemma:singsetdense}
        Denote by $\pi_1 : \mathbb{R}^{n-1} \times \mathbb{R} \to \mathbb{R}$ the projection to the first coordinate. Then the projections of singular points on $\partial T$ under $\pi_1$ are dense
        \begin{equation}\label{eq:densecond}
            \overline{\pi_1\big(F^{-1}(\mathcal{S})\big)} = \partial \Omega.
        \end{equation}
        Moreover, singular points are uniformly recurrent: for every $\varepsilon > 0$, there is an $L(\varepsilon) > 0$ depending on $T$, such that for every $t \in \mathbb{R}$
        \[\pi_1\Big(F^{-1}(\mathcal{S}) \cap \partial \Omega \times [t, t + L(\varepsilon)]\Big)\]
        is $\varepsilon$-dense in $\partial \Omega$. 
   \end{lemma}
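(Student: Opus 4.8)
The plan is to work in an unfolding of the periodic tube and to exploit the periodicity structure $F(x, t+L) = F(\mathcal{R}x, t)$ to reduce the problem to a statement about the orbit of a single rotation, much in the spirit of the admissibility arguments in Section~\ref{subsec:admissible}. First I would set up the unfolding $T^\infty \subset \mathbb{R}^n$ of the tube $T$: by Proposition~\ref{prop:maximaltube}, the cross-section $\Omega$ is convex and every boundary trajectory comes arbitrarily close to $\mathcal{S}$. The key observation is that the set $F^{-1}(\mathcal{S}) \cap (\partial\Omega \times \mathbb{R})$ is invariant under the ``shift-by-$L$-and-rotate'' map $(p, t) \mapsto (\mathcal{R}p, t - L)$, because this map descends to the identity on $\mathcal{C}_\varphi$ (equivalently, on $D_0$, the two points $F(p, t)$ and $F(\mathcal{R}p, t-L)$ coincide). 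Projecting to $\partial\Omega$ via $\pi_1$ and pushing forward by $\mathcal{R}$, this means that the set $A := \overline{\pi_1(F^{-1}(\mathcal{S}))} \subset \partial\Omega$ is $\mathcal{R}$-invariant: $\mathcal{R}A = A$.

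Next I would establish \eqref{eq:densecond}. Since every boundary trajectory (i.e.\ every $F(\{p\}\times\mathbb{R})$ with $p \in \partial\Omega$) comes arbitrarily close to $\mathcal{S}$, for each $p \in \partial\Omega$ the orbit $\{\mathcal{R}^k p : k \in \mathbb{Z}\}$ accumulates on $A$ (unfolding the approach to $\mathcal{S}$ and tracking which cross-section copy it occurs in shows a rotated iterate of $p$ is close to $A$). But $\mathcal{R}$ is an isometry of the convex compact set $\partial\Omega$, so by the uniform recurrence established in the proof of Proposition~\ref{prop:iso_adm} (the $r=0$ case: points are uniformly recurrent in compact isometric systems), the orbit closure $\overline{\{\mathcal{R}^k p\}}$ equals the orbit closure of \emph{any} of its points, hence is $\mathcal{R}$-minimal. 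Combined with $\mathcal{R}$-invariance and closedness of $A$, if $A$ meets one such orbit closure it contains it; a compactness/covering argument over finitely many orbit-closure pieces covering $\partial\Omega$ then forces $A = \partial\Omega$, which is \eqref{eq:densecond}.

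For the uniform recurrence statement, I would argue as follows. Fix $\varepsilon > 0$. By \eqref{eq:densecond} just proved, choose finitely many points $q_1, \dots, q_N \in \pi_1(F^{-1}(\mathcal{S}))$ that are $\varepsilon/2$-dense in $\partial\Omega$, realized as $q_j = \pi_1(F^{-1}(s_j))$ with $s_j \in \mathcal{S}$ and corresponding heights $t_j$. Now apply the uniform recurrence of the product isometry $(\mathcal{R}, \dots, \mathcal{R})$ on $(\partial\Omega)^N$ at the point $(q_1, \dots, q_N)$, exactly as in the proof of Proposition~\ref{prop:iso_adm}: there is a relatively dense set of integers $\mathcal{T} \subset \mathbb{Z}$, with gap bound $M = M(\varepsilon)$, such that for every $k \in \mathcal{T}$ and every $j$, $\dist(\mathcal{R}^k q_j, q_j) < \varepsilon/2$. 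The invariance $(p,t) \mapsto (\mathcal{R}^k p, t - kL)$ of $F^{-1}(\mathcal{S})$ then produces, for each $k \in \mathcal{T}$, singular points on $\partial\Omega \times \{t_j - kL\}$ projecting to within $\varepsilon/2$ of $q_j$, hence (by triangle inequality with the $q_j$'s) $\varepsilon$-dense in $\partial\Omega$. Since consecutive elements of $\mathcal{T}$ are at most $M$ apart, any window of height $L(\varepsilon) := (M+1)L + \max_j |t_j| + 1$ (a crude but safe choice) contains such a fully $\varepsilon$-dense layer; shifting by a period $L$ handles arbitrary starting time $t$.

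\textbf{Main obstacle.} The step I expect to be most delicate is transferring ``every boundary trajectory comes arbitrarily close to $\mathcal{S}$'' (a statement in $D_0$) into the clean $\mathcal{R}$-invariant, minimal picture on $\partial\Omega$ — one must be careful that the approach to $\mathcal{S}$ genuinely happens through a sequence of unfolded copies whose cross-sectional positions are controlled rotates $\mathcal{R}^k p$ of the base point, rather than drifting; this is where convexity of $\Omega$ and the precise form of $\varphi$ in Definition~\ref{def:periodic_tube} (the rotation fixes the $\mathbb{R}$-direction) are essential, and where I would spend the most care in the full write-up. The rest is a routine packaging of the uniform-recurrence lemma already proved for Proposition~\ref{prop:iso_adm}.
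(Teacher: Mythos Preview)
Your overall strategy is close to the paper's, and your instinct about where the difficulty lies is correct. However, the step you flag as the ``main obstacle'' is a genuine gap, and your suggested resolution (convexity of $\Omega$ and the precise form of $\varphi$) does not fill it. The issue is this: from Proposition~\ref{prop:maximaltube} you learn that $(p, t_k) \in \partial\Omega \times \mathbb{R}$ is close to some singular point $\sigma_k$ of the unfolded polyhedra, but $\sigma_k$ need not lie on $\partial T^\infty$, so there is no reason a priori for any rotated iterate $\mathcal{R}^{m_k} p$ to be close to $A = \overline{\pi_1(F^{-1}(\mathcal{S}))}$. What the paper actually does is a compactness argument: reduce modulo $L$ so that $\sigma_k$ lies in the compact set $\mathcal{S}_1 := \cup_{i=0}^{k}(\partial P^i)^{(n-2)}$; use the key identity $d(x, \partial T^\infty) = d(\mathcal{R}x, \partial T^\infty)$ (since $\mathcal{R}$ fixes $\partial T^\infty$ setwise) to conclude $d(\sigma_k, \partial T^\infty) \to 0$; then pass to a subsequence $\sigma_k \to \sigma' \in \mathcal{S}_1 \cap \partial T^\infty$. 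It is $\pi_1(\sigma')$, a genuine element of $\pi_1(F^{-1}(\mathcal{S}))$, that the rotated iterates $\mathcal{R}^{m_k}p$ approach. Your minimality-of-orbit-closures argument is a valid and elegant way to pass from ``the orbit closure of $p$ meets $A$'' to ``$p \in A$'', but it does not bypass this compactness step; it sits on top of it.

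For the uniform recurrence claim your argument via the product isometry on $(\partial\Omega)^N$ is correct but more elaborate than needed. The paper's observation is simpler: since $\mathcal{R}$ is an isometry of $\partial\Omega$, the rotated set $\{\mathcal{R}^{-k} q_1, \dotso, \mathcal{R}^{-k} q_N\}$ is $\varepsilon$-dense for \emph{every} integer $k$, not just for $k$ in some relatively dense set. Hence one only needs a single $k$ with all $t_j + kL$ in the window $[t, t + L(\varepsilon)]$, which gives $L(\varepsilon)$ of order $L + \max_j t_j - \min_j t_j$ without invoking Proposition~\ref{prop:iso_adm} at all.
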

   \begin{proof}
       By Proposition \ref{prop:maximaltube}, for every $p \in \partial \Omega$ and for every $\gamma > 0$ there is a $t \in \mathbb{R}$ such that $F(p, t)$ is $\gamma$-close to $\mathcal{S}$ (here we extended $F$ to $\overline{\Omega} \times \mathbb{R}$ as a continuous map to $D$). In fact, more is true and we can study the formation of singular points on the boundary of $T$. The idea of the proof of \eqref{eq:densecond} is to use the periodicity of the tube and of singular points. Recall that the local isometry $F$ is invariant under the map $(x, t) \mapsto (\mathcal{R}x, t + L)$. Since this is a rigid motion, the nearby singular points in an unfolding can only approach $\partial T^\infty$ arbitrarily close, if there appears a singular point on $\partial T^\infty$. 
       
       More precisely, consider an unfolding $P^\infty = P^0, P^1, P^2, \dotso$ along $T$ and denote by $(\partial P^i)^{(n-2)}$ the $(n-2)$-skeleton of the boundary of $\partial P^i$. Define
       \[\mathcal{S}_1 = \cup_{i = 0}^k (\partial P^i)^{(n-2)},\]
       for a sufficiently large $k$ to be specified later. Let us define the forward orbit of $\mathcal{S}_1$ under the rotation $\mathcal{R}$ as
       \[\mathcal{S}_\infty = \cup_{i = 0}^\infty \mathcal{R}^i \mathcal{S}_1.\]
       By the observation above and by taking $k$ larger than the period of the symbol associated to $T$, we know that $\partial \Omega \subset \overline{\pi_1(\mathcal{S}_\infty)}$. Also, observe that for any point $x \in \mathbb{R}^n$, since $\mathcal{R}$ is an isometry fixing $\partial T^\infty$
       \begin{equation}\label{eq:tubedistance}
           d(x, \partial T^\infty) = d(\mathcal{R}x, \partial T^\infty).
       \end{equation}
       Consider now $\pi_1(\mathcal{R}^{k_i}x_i) \to y \in \partial \Omega$, for some $k_i \geq 0$ and $x_i \in \mathcal{S}_1$. By compactness of $\mathcal{S}_\infty$, we may assume without loss of generality $\mathcal{R}^{k_i}x_i \to x \in \partial T^\infty$, so $\pi_1(x) = y$. By compactness of $\mathcal{S}_1$ and since by \eqref{eq:tubedistance} we have $d(x_i, \partial T^\infty) \to 0$, after re-labelling we may assume $x_i \to x' \in \partial T^\infty \cap \mathcal{S}_1$. But then since $\mathcal{R}$ an isometry and by triangle inequality we have $\mathcal{R}^{k_i} x' \to x$.
       
       The previous argument shows that
       \[\partial \Omega \subset \overline{\pi_1 \big(\mathcal{S}_\infty \cap \partial T^\infty\big)},\]
       which proves the first claim. The second claim follows from the first claim, the periodicity of $T$ and the fact that $\mathcal{R}$ is an isometry of $\partial \Omega$.
   \end{proof}
   
   Now we may prove this property in full
   
   \begin{lemma}\label{lemma:P2'}
        Property \eqref{eq:P2'} holds for any $n \geq 2$.
   \end{lemma}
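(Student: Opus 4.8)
The plan is to unwind what Property \eqref{eq:P2'} is asking for and reduce it to the uniform recurrence of singular points established in Lemma \ref{lemma:singsetdense}. Fix a maximal periodic tube $F: \Omega \times \mathbb{R} \to D_0$ with length $L$ and associated rotation $\mathcal{R}$. The key point is that $\partial \Omega$ is compact and, by Proposition \ref{prop:maximaltube}, every trajectory on $\partial T$ comes arbitrarily close to $\mathcal{S}$; Lemma \ref{lemma:singsetdense} upgrades this to a uniform statement: there is $L(\varepsilon/4) > 0$ such that for every $t \in \mathbb{R}$ the set $\pi_1\big(F^{-1}(\mathcal{S}) \cap \partial\Omega \times [t, t + L(\varepsilon/4)]\big)$ is $(\varepsilon/4)$-dense in $\partial\Omega$. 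I would first extract from this a finite list of ``witness'' singular points: for each fixed $t$, finitely many points $F(q_1, s_1), \dots, F(q_m, s_m) \in \mathcal{S}$ with $s_j \in [t, t + L(\varepsilon/4)]$ whose projections $q_j$ onto $\partial\Omega$ are $(\varepsilon/4)$-dense.

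Next I would turn a singular point on $\partial T$ into a $\delta$-ball contained in $F^{-1}(U_\varepsilon)$. If $F(q, s) \in \mathcal{S}$ with $q \in \partial\Omega$, then $U_\varepsilon$ being an $\varepsilon$-neighbourhood of $\mathcal{S}$ in $D$ means the preimage $F^{-1}(U_\varepsilon)$ contains a full $\varepsilon$-ball (in the flat metric on $\Omega \times \mathbb{R}$, which $F$ makes a local isometry) around the point $(q, s)$, at least once we pass to the unfolding $T^\infty$ where $F$ is a genuine isometry onto its image and there is no self-overlap issue on the relevant scale. In particular, for any point $(p_0, z)$ lying within distance $\varepsilon/2$ of such a $(q,s)$ — which happens, for a suitable choice of $z$ with $|z - s|$ bounded, whenever $\dist(p_0, q) < \varepsilon/4$ — the ball $B_\delta(p_0, z)$ with $\delta := \varepsilon/4$ is contained in $F^{-1}(U_\varepsilon)$, since every point of that ball is within $\varepsilon$ of $F(q,s) \in \mathcal{S}$.

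Now I assemble the statement. Set $\delta := \varepsilon/4$, $l := L(\varepsilon/4) + \diam(\Omega) + 1$ (anything large enough to absorb the vertical travel to reach a witness singular point), and $\eta$ small, say $\eta := \varepsilon/8$, so that the annular shell $\Omega_{\varepsilon/2 - \eta} \setminus \Omega_{\varepsilon/2 + \eta}$ consists of points $p_0$ at distance roughly $\varepsilon/2$ from $\partial\Omega$. Given such a $(p_0, z_0)$, let $p_0' \in \partial\Omega$ be a nearest boundary point, so $\dist(p_0, p_0') \approx \varepsilon/2$. By the uniform recurrence applied on the interval $[z_0, z_0 + L(\varepsilon/4)]$, pick a witness singular point $F(q, s) \in \mathcal{S}$ with $q \in \partial\Omega$, $\dist(q, p_0') < \varepsilon/4$, and $s \in [z_0, z_0 + L(\varepsilon/4)]$. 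Then $\dist(p_0, q) \le \dist(p_0, p_0') + \dist(p_0', q)$; this is on the order of $\varepsilon/2 + \varepsilon/4$, which is $\le \varepsilon - \delta - \eta$ after adjusting constants (this is exactly the reason the Property is stated for the shell at radius $\varepsilon/2$, not $\varepsilon$). Set $z := s$, so $|z - z_0| \le L(\varepsilon/4) \le l$, and conclude that $B_\delta(p_0, z) \subset F^{-1}(U_\varepsilon)$: every point of this ball is within $\delta + \dist(p_0, q) < \varepsilon$ of $F(q, s) \in \mathcal{S}$, hence lies in $U_\varepsilon$. Finally one checks the constants $\delta, l, \eta$ can be chosen uniformly over the (finitely many, by Theorem \ref{thm:fintubecond}) maximal periodic tubes missing $U_\varepsilon$, or simply notes the Property is stated per-tube.

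\textbf{Main obstacle.} The one genuine subtlety, rather than the bookkeeping of $\varepsilon$'s, is the passage from ``$F$ is a local isometry'' to ``$F^{-1}(U_\varepsilon)$ contains a genuine Euclidean $\delta$-ball around a preimage of a singular point.'' One must argue that on the scale $\varepsilon$ the immersed tube does not fold back on itself near $\partial T$ in a way that would shrink the effective preimage — this is handled by working in the unfolding $T^\infty \subset P^\infty \subset \mathbb{R}^n$, where the tube embeds and the distance from a point near $\partial T^\infty$ to the unfolded singular set is literally the ambient Euclidean distance, and then refolding. Getting this geometric bookkeeping clean, and making sure the chosen $z$ genuinely satisfies $|z - z_0| \le l$ with $l$ independent of $(p_0, z_0)$, is where the care is needed; everything else is triangle-inequality juggling anchored on Lemma \ref{lemma:singsetdense}.
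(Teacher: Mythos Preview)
Your approach is essentially identical to the paper's: pick a nearest boundary point to $p_0$, invoke Lemma \ref{lemma:singsetdense} to find a singular point $(q,z)$ on $\partial T$ with $q$ close to that boundary point and $|z-z_0|$ bounded, then conclude by the triangle inequality. The paper simply takes $\eta = \delta = \varepsilon/6$ and $l = L(\varepsilon/6)$, which makes the arithmetic close cleanly (your choices $\delta = \varepsilon/4$, $\eta = \varepsilon/8$ give $\delta + \dist(p_0,q) \le \varepsilon/4 + 5\varepsilon/8 + \varepsilon/4 = 9\varepsilon/8$, so you do need the adjustment you flag); the ``main obstacle'' you raise about the local isometry is not actually an issue, since $F$ extended to $\overline{\Omega}\times\mathbb{R}$ is length non-increasing into $D$, and the paper does not comment on it.
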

   \begin{proof}
        Define $\eta = \delta = \varepsilon/6$. Consider a point $p \in \partial \Omega$ closest to $p_0$, at a distance at most $2\varepsilon/3$. By Lemma \ref{lemma:singsetdense}, we can find a $z$ with $|z - z_0| \leq L(\varepsilon/6)$, such that there is a point $q \in \partial \Omega$ that is $\varepsilon/6$-close to $p$ and $F(q, z) \in \mathcal{S}$. Therefore, $(p_0, z)$ is by triangle inequality $5\varepsilon/6$-close to $(q, z)$, and so its $\varepsilon/6$-neighbourhood is contained in $F^{-1}(U_\varepsilon)$.
   \end{proof}
   
   \begin{rem}\rm
        In other words, Property \eqref{eq:P2'} is saying that the flow-out of $U_\varepsilon$ under the billiard flow contains, for each maximal periodic tube $T$, a conical neighbourhood of the tube direction in a neighbourhood of $\partial T$.
   \end{rem}
    
	\section{A quantitative estimate on   periodic tube lengths}\label{sec:lengthsestimate}
    
    In this section we prove our main quantitative estimate on the lengths of   periodic tubes. For this, we first establish an estimate on the angle of intersecting tubes. A new feature compared to the polygonal case, is that these estimates depend on dynamical invariants of associated rotations, that we are about to define.
   
    We start with the case $n = 2$. Given an irrational rotation $\mathcal{R} \in SO(2)$, $\varepsilon > 0$ and a radius $r > 0$, we write $N = N(\mathcal{R}, \varepsilon, r) \in \mathbb{N}$ for the smallest positive integer such that for every $x$ lying on the circle of radius $r$, denoted by $S_r^1$, we have
    \[x, \mathcal{R}x, \dotso, \mathcal{R}^{N - 1}x\]
    is $\varepsilon$-dense in $S^1_r$. If the rotation $\mathcal{R}$ is rational of minimal order $o$, we set $N(\mathcal{R}, \varepsilon, r) = o$ for $\varepsilon$ sufficiently small. We specially set $N(\mathcal{R}, \varepsilon, 0) = 1$. By the scaling property of the circle we obtain $N(\mathcal{R}, \varepsilon, r) = N(\mathcal{R}, \frac{\varepsilon}{r}, 1)$.
    
    More generally, assume $\mathcal{R} \in SO(n)$. Given an $r \geq 0$, we will denote by $B_r \subset \mathbb{R}^n$ the ball of radius $r$; we define $B_0 = \{0\}$. Clearly $\mathcal{R}$ acts on $\overline{B_r}$. Write $\{\mathcal{O}_i(r): i \in I(r)\}$ for the  closed minimal\footnote{Minimal in the sense that $\mathcal{O}_i(r)$ does not contain any non-empty proper closed $\mathcal{R}$-invariant set.} orbits of the action of $\mathcal{R}$ on $\overline{B_r}$; the $\mathcal{O}_i(r)$ partition $\overline{B_r}$ into disjoint pieces.
    \begin{definition}\label{def:N}
        Given an $\varepsilon > 0$, we introduce
        \begin{equation}\label{eq:defproperty}
            N(\mathcal{R}, \varepsilon, r) = \max_{i \in I} \, \min \big\{ m \in \mathbb{N} : \forall x \in \mathcal{O}_i(r), \,\, \{x, \mathcal{R}x, \dotso, \mathcal{R}^{m - 1}x\} \,\, \mathrm{is} \,\, \varepsilon\text{-}\mathrm{dense \,\, in} \,\, \mathcal{O}_i(r)\big\}.
        \end{equation}
    \end{definition}
    
    Note that the orbits $\mathcal{O}_i(r)$ are in fact tori, as can be seen by conjugating $\mathcal{R}$ to a canonical form, which is a direct sum of rotations in $SO(2)$ with possibly some further $\pm 1$'s at the diagonal, and looking at the conjugated action on a suitable torus (cf. discussion before Proposition \ref{prop:iso_adm}). Note also that $N = N(\mathcal{R}, \varepsilon, r)$ in \eqref{eq:defproperty} is well-defined: the minimum always exists by the minimality and compactness of orbits and the maximum is well-defined since there are only finitely many orbit types (up to scaling and translation), by the diagonalisation argument above. In fact, for sufficiently small $\varepsilon > 0$, the maximum is achieved by a ``typical orbit'', i.e. the one with non-zero components corresponding to each rotation in $SO(2)$ or $\pm 1$ on the diagonal. Note that we also have the scaling property $N(\mathcal{R}, \varepsilon, r) = N(\mathcal{R}, \frac{\varepsilon}{r}, 1)$. Given a convex polyhedron $P$, we define $N(\mathcal{R}, \varepsilon) := N(\mathcal{R}, \varepsilon, \diam P)$.
    
    Intuitively, Definition \ref{def:N} and \eqref{eq:defproperty} give us a smallest positive integer $N$ such that the orbit of any point $x \in \overline{B_r}$ is $\varepsilon$-dense in the corresponding closed minimal orbit $\mathcal{O}_i(r)$. It is clear that this more general definition agrees with the one given for $n = 2$. We will need these notions when quantifying the formation of the singular set on the boundary of a  periodic tube. 
     To be more precise, consider a maximal tube $T = F(\Omega \times \mathbb{R})$ of length $L$ and associated rotation $\mathcal{R}$. We will use the number $N = N(\mathcal{R}, \varepsilon, \diam P)$ in Definition \ref{def:N} above to say that the intersection of a slice of $T$ of height $NL$ with the singular set
    \[F^{-1}(\mathcal{S}) \cap \partial \Omega \times [x, x + NL],\]
    is $\varepsilon$-dense when projected orthogonally onto $\partial \Omega$, for any $x$. This will be crucial to exclude the intersection of long maximal tubes in the angle estimate Lemma \ref{lem:angle_est} below. In other words, $NL$ quantifies the second part of Proposition \ref{prop:maximaltube} for periodic tubes.

    \begin{ex}\rm\label{ex:defN}
        Here we consider a few examples shedding some light on Definition \ref{def:N}. Firstly, since $SO(1)$ is trivial, we have $N = 1$ in the case $n = 1$ always. Then, if $\mathcal{R} \in SO(n)$ is rational, i.e. it is of finite order $o$, then the orbits $\mathcal{O}_i(r)$ consist of up to $o$ points and one easily sees that $N(\mathcal{R}, \varepsilon, r) = o$ for $\varepsilon$ small enough and $r \neq 0$. Finally, if we take $\mathcal{R} \in SO(2)$ to be a rotation in an irrational angle $\alpha$, the speed at which the orbit of a point fills the circle depends on the arithmetic properties of $\alpha$ -- see \cite[Chapter 4.3.]{KN74} for precise results. We also remark that we could have taken in \eqref{eq:defproperty} the $\mathcal{R}$ to be a reflection -- then for $\varepsilon$ small enough, we would have $N(\mathcal{R}, \varepsilon, r) = 2$ since the orbits are of size $1$ or $2$.
    \end{ex}

    Let now $D$ be the doubled polyhedron as before and let $U_\varepsilon$ be the $\varepsilon$-neighbourhood of $\mathcal{S}$. By Theorems \ref{th:general-dynamics} and \ref{th:non-periodic}, any trajectory not hitting $U_\varepsilon$ is contained in an immersed periodic tube with a certain convex cross-section. Consider any immersed tube $T$ of length $L$ with cross-section being a disc of radius $\frac{\varepsilon}{10}$. Recall that these tubes are given by immersing solid cylinders $F: B_{\frac{\varepsilon}{10}} \times \mathbb{R} \to D_0$. By definition, $F$ extends to an immersion $F: B_{\varepsilon} \times \mathbb{R}\to D_0$. Then we have the following lower bound on the angle of intersections of such periodic tubes in $D$.
    
    \begin{lemma}\label{lem:angle_est} Let $T_1$ and $T_2$ be two intersecting, periodic tubes of radii $\varepsilon/10$, lengths $L_1, L_2$ and associated rotations $\mathcal{R}_1, \mathcal{R}_2$, with geodesics $\gamma_1$ and $\gamma_2$ in their centre respectively, such that $\gamma_1$ and $\gamma_2$ do not intersect $U_\varepsilon$. More precisely, assume that a geodesic $\gamma_1'$ lying in $T_1$ parallel to $\gamma_1$ intersects a geodesic $\gamma_2'$ lying in $T_2$ parallel to $\gamma_2$ at a point $m \in D_0$, at an angle $\alpha$. Assume also $\gamma_1'$ is not parallel to $\gamma_2'$. Then the following bounds hold
    \begin{itemize}
            \item[1.] We have
            \begin{equation}\label{angleestimate}
    	        \frac{1}{\sin \alpha} \leq \frac{\min\big(N(\mathcal{R_1}, \varepsilon/5)L_1, N(\mathcal{R_2}, \varepsilon/5)L_2\big)}{\frac{4\varepsilon}{5}}.
    	    \end{equation}
            \item[2.] In particular, if $n = 2$ then 
            \begin{equation}\label{angleestimaten=2}
    	        \frac{1}{\sin \alpha} \leq \frac{\min(L_1, L_2)}{\frac{4\varepsilon}{5}}.
    	    \end{equation}
    	    \item[3.] In particular, if $P$ is rational and $o_1$ and $o_2$ are the finite orders of $\mathcal{R}_1$ and $\mathcal{R}_2$
                \begin{equation}\label{angleestimaterational}
    	            \frac{1}{\sin \alpha} \leq \frac{\min(o_1 L_1, o_2 L_2)}{\frac{4\varepsilon}{5}}.
    	        \end{equation}
    \end{itemize}
    \end{lemma}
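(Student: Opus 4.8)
The plan is to reduce the lemma to a single transversality estimate proved by unfolding one of the two tubes, and then read off the three displayed bounds as that estimate plus two specialisations. By the symmetric roles of $T_1$ and $T_2$ it suffices to establish
\[
\frac{1}{\sin\alpha}\ \le\ \frac{N(\mathcal R_2,\varepsilon/5)\,L_2}{4\varepsilon/5};
\]
the same argument with the indices interchanged gives the analogous bound with $N(\mathcal R_1,\varepsilon/5)\,L_1$ in the numerator, and since $1/\sin\alpha$ is below both it is below the smaller, which is \eqref{angleestimate}. Then \eqref{angleestimaten=2} is the case $n=2$, where the rotation associated with a periodic tube fixes a line of $\mathbb R^2$ and is therefore the identity, so that $N\equiv 1$; and \eqref{angleestimaterational} is the case in which $P$ is rational, so that each $\mathcal R_i$ has finite order $o_i$ and $N(\mathcal R_i,\varepsilon/5)=o_i$ once $\varepsilon$ is small relative to $\diam P$ (cf.\ Example \ref{ex:defN}).

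To prove the displayed inequality I would first pass to the maximal periodic tube $\widehat T_2=F_2(\Omega_2\times\mathbb R)$ containing $T_2$. By Proposition \ref{prop:maximaltube} the cross-section $\Omega_2$ is convex, and since $\gamma_2$ misses $U_\varepsilon$ — hence stays at distance $\ge\varepsilon$ from $\mathcal S$ — the same proposition forces $\Omega_2$ to contain the ball of radius $\varepsilon$ about its axis point $p_2$ (a boundary geodesic at transverse distance $<\varepsilon$ from $\gamma_2$ would be bounded away from $\mathcal S$, contradicting that it comes arbitrarily close to $\mathcal S$). Next I would unfold $D_0$ along $\gamma_2$: the corridor is invariant under the screw motion $(x,t)\mapsto(\mathcal R_2 x,\,t+L_2)$; the tube $\widehat T_2$ develops to a straight solid cylinder $\widehat T_2^\infty$ with axis $\gamma_2$ and cross-section $\Omega_2$, whose interior contains no point of the $(n-2)$-skeleton; and, by the quantitative form of Lemma \ref{lemma:singsetdense} — which is precisely what the invariant $N$ of Definition \ref{def:N} measures — the singular points on $\partial\widehat T_2^\infty=\partial\Omega_2\times\mathbb R$ project, inside every slice of height $N(\mathcal R_2,\varepsilon/5)\,L_2$, to an $\varepsilon/5$-dense subset of $\partial\Omega_2$.

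Then I would follow $\gamma_1$, together with its radius-$\varepsilon/10$ thickening $T_1$, inside this corridor. Since $m\in\gamma_2'\subset T_2$ lies within $\varepsilon/10$ of the axis $\gamma_2$, the geodesic $\gamma_1$ passes within $\varepsilon/10$ transversally of the axis, and once lifted into $\widehat T_2^\infty$ it develops as a straight segment whose transverse distance to the axis grows at rate $\sin\alpha$. Because $\partial\Omega_2$ lies at transverse distance $\ge\varepsilon$ from the axis, the cylinder $T_1$ cannot touch $\partial\Omega_2$ until $\gamma_1$ has drifted transverse distance at least $(\varepsilon-\varepsilon/10)-\varepsilon/10=4\varepsilon/5$, i.e.\ not before arc-length $\tfrac{4\varepsilon/5}{\sin\alpha}$ past its closest approach, and likewise before it. Suppose, towards a contradiction, that $\tfrac{4\varepsilon/5}{\sin\alpha}>N(\mathcal R_2,\varepsilon/5)\,L_2$. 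Then by the time $\gamma_1$ drifts out to $\partial\Omega_2$ the straight segment has swept through more than one recurrence slice of the singular set on $\partial\widehat T_2^\infty$, so, exploiting the screw-invariance of the corridor, one finds a singular point of $\partial\widehat T_2^\infty$ lying within $\varepsilon$ of $\gamma_1$ — which contradicts the hypothesis that $\gamma_1$ misses $U_\varepsilon$, and the inequality follows.

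The step I expect to be the main obstacle is this last one, namely turning ``$\gamma_1$ is trapped straight inside the clean cylinder $\widehat T_2^\infty$ for a long time'' into ``a recurring boundary singularity actually lands within $\varepsilon$ of $\gamma_1$''. A straight segment can sit in the clean interior of $\widehat T_2^\infty$ indefinitely — the axis itself does — so the contradiction genuinely has to come from $\gamma_1$ drifting transversally toward $\partial\Omega_2$, combined with the fact that, under the screw motion $(x,t)\mapsto(\mathcal R_2 x,t+L_2)$, a suitable singular point recurs on $\partial\Omega_2$ within $N(\mathcal R_2,\varepsilon/5)$ periods, hence within height $N(\mathcal R_2,\varepsilon/5)\,L_2$, and at a cross-sectional position within $\varepsilon/5$ of the place where $\gamma_1$ meets $\partial\Omega_2$. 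Matching the transverse-drift scale against this vertical-recurrence scale, and carefully tracking the radii $\varepsilon/10$ and $\varepsilon$, is what pins down the constant $4\varepsilon/5$ in \eqref{angleestimate}; I would expect the accounting to need extra care in the range where $\alpha$ is not small.
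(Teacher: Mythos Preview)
Your overall strategy---unfold along one tube, follow the other central geodesic inside it, and derive a contradiction from the recurrence of singular points---matches the paper's. But the mechanism you propose for the contradiction is different, and as you suspect it does not close.

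You try to produce a singular point on $\partial\widehat T_2^\infty$ lying within $\varepsilon$ of $\gamma_1$. The recurrence property of singular points on $\partial\widehat T_2^\infty$ controls only their cross-sectional projection onto $\partial\Omega_2$ (they are $\varepsilon/5$-dense there in every height-slice of length $N(\mathcal R_2,\varepsilon/5)L_2$); it gives no control on the height within that slice. So the singular point you locate near the exit point $(q,h)$ of $\gamma_1$ may sit at height up to $N(\mathcal R_2,\varepsilon/5)L_2$ away, and its distance to the line $\gamma_1$ involves this vertical displacement. Under your contradiction hypothesis the vertical contribution after projecting perpendicular to $\gamma_1$ is at most $N(\mathcal R_2,\varepsilon/5)L_2\sin\alpha<4\varepsilon/5$, but combined with the $\varepsilon/5$ cross-sectional error this does not squeeze below $\varepsilon$; and there is the further issue that the foot of the perpendicular may lie past the point where $\gamma_1$ has left $\widehat T_2^\infty$, so the straight-line development no longer represents $\gamma_1$. (There is also a small slip: since $m\in\gamma_1'\cap\gamma_2'$ and each $\gamma_i'$ is only within $\varepsilon/10$ of $\gamma_i$, the closest approach of $\gamma_1$ to the axis is $\le\varepsilon/5$, not $\varepsilon/10$.)

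The paper's contradiction runs the other way and avoids any distance-matching. Inside an enlarged tube $T_2'$ of radius $3\varepsilon/5$ around $\gamma_2$, one translates the long segment of $\gamma_1$ within the two-dimensional strip $S_2$ (spanned by $\gamma_1$ and the tube direction) until it lands on $\partial T_1''$, the boundary of the \emph{maximal} tube around $\gamma_1$; call this translate $\gamma_1''$. Then one uses the $\varepsilon/5$-density of singular points on $\partial T_1''$ in height-slices of length $N(\mathcal R_1,\varepsilon/5)L_1$ to find a singular point within $\varepsilon/5$ of $\gamma_1''$. Since $\gamma_1''\subset S_2\subset T_2'$ and $T_2'$ misses $U_{\varepsilon/5}$, this singular point lies in the interior of the maximal tube $T_2''$---contradiction with the definition of a maximal tube. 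Two points deserve emphasis: the contradiction is ``singular point inside a maximal tube'', not ``singular point near $\gamma_1$'', so no height-matching is needed; and because the singular points used lie on $\partial T_1''$, the bound this produces is $l_1\le N(\mathcal R_1,\varepsilon/5)L_1$, i.e.\ with index $1$, not $2$ as you claim. The bound with index $2$ comes from the symmetric argument (following $\gamma_2$ inside $T_1'$), after which taking the minimum yields \eqref{angleestimate}.
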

        	\begin{figure}
    	    \centering
    	    	\begin{tikzpicture}[scale=0.8]
		\draw[thick] 
				(0,-4)--(0,4)
				node[below left]{$T_2'$}
				(7,-4)--(7,4)
				(0,0) to[out=-85,in=-95,distance=2cm] (7,0)
				(3.5, 4) node[below left]{$\gamma_2$}
				(0,-4) to[out=-85,in=-95,distance=1.9cm] (7,-4)
				(0,4) to[out=-85,in=-95,distance=1.9cm] (7,4)
				(0,4) to[out=85,in=95,distance=1.9cm] (7,4)
				;
		\draw[dashed,opacity=0.5] 
				(0,0) to[out=85,in=95,distance=2cm] (7,0)
				(0,-4) to[out=85,in=95,distance=1.9cm] (7,-4)
				;
		\draw[opacity=0.5]	
				(3.5, -4)--(3.5, 4) 
				;

		\fill[black!5, thick,opacity=0.5,draw=black]
				(1, -5)--(1, 3) --(5.5, 2.85) node[below left, black]{\small $S_2$} --
				(5.5, -5.15) -- (1,-5)
				   (5.5,-1.25) circle (1.5pt) 
				   (1,-1.05) circle (1.5pt) 
				;
		\draw[thick] (3.5, 0) circle (1pt) (3.5, 0.1) node[left]{$p$}; 
		\fill[black!10,draw=black]
				(1,-1.05) node[above left, black]{\small $p_1$}-- (5.5,-1.25) node[above right, black]{\small $p_2$} -- (3.5,0) -- (1,-1.05) 
				;
		\draw (3.25,-1.15) node[below, black]{\tiny $H$} circle (2pt) -- (3.5,0) 
			;
		\draw[thick,blue] 
				(0.3, 2.5) -- (1, 1.63)
				(5.5, -3.92) -- (6.49,-5.05) (6.3,-4.5) node{$\gamma_1$}
				(1, 1.63) circle(2pt) 
				(5.5, -3.92) circle(2pt) 
				;
		\draw[thick, blue, dashed]
				(1, 1.63)--(5.5, -3.92)
				;
		
    	\draw[blue] (5.5, -3.1) arc (90:130:0.82)
    	(5.3, -3.35) node[]{\small$\alpha$};
					
    	\draw[arrows=<->, thick](1,-4.29)--(5.5,-4.45);
    	\tikzstyle{ann} = [fill=white,font=\footnotesize,inner sep=1pt]
		\node[ann] at (3.25, -4.35) {$h_2$};
		
	\end{tikzpicture}
	    	\caption{Intersection of the tube $T_2'$ with the geodesic $\gamma_1$ at an angle $\alpha$. The strip $S_2$ of width $h_2$ in the direction of $T_2'$ and containing $\gamma_1$ is lightly shaded.}
    	    \label{fig:tubeintersection}
    	\end{figure}
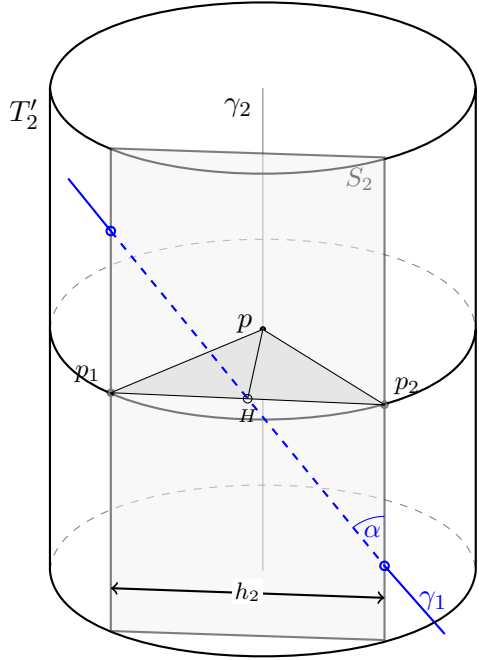
    \begin{proof}
        Enlarge $T_1$ to an immersed tube $T_1'$ of radius $\frac{3\varepsilon}{5}$, not intersecting $U_{\varepsilon/5}$, such that $T_1'$ intersects $\gamma_2$. This is possible since the distance between $\gamma_1$ and $\gamma_2$ is at most $\frac{\varepsilon}{5}$. Similarly, extend $T_2$ to an immersed tube $T_2'$ intersecting $\gamma_1$. By a slight abuse of notation, we will sometimes identify the tubes $T_j'$ with their corresponding unfoldings $(T_j')^\infty$ in $\mathbb{R}^n$ for $j = 1, 2$. Following this rule, let $p \in \gamma_2$ the unique, closest point to $\gamma_1$. It is unique since $\gamma_1$ and $\gamma_2$ are not parallel by the hypothesis.
        
        Consider the strip $S_2$ contained in $(T_2')^\infty$, determined by $\gamma_1$ and the direction of the tube $T_2'$ and denote its width by $h_2$. Consider the points $p_1$ and $p_2$ defined as intersections of the strip $S_2$ with the boundary of the cross-section of $(T_2')^\infty$ going through $p$ and consider the midpoint $H$ of the segment $p_1p_2$ (see Figure \ref{fig:tubeintersection}). By the triangle inequality applied to the triangle $p_1pH$, we have
    	\begin{equation}\label{eq:width}
    	    \frac{1}{2}h_2 > \frac{3\varepsilon}{5} - \frac{\varepsilon}{5} = \frac{2\varepsilon}{5}.
    	\end{equation}
    	
    	We introduce the parameter $l_1 := \frac{h_2}{\sin \alpha}$, i.e. the length of the portion of $\gamma_1$ lying in the strip $S_2$ (note that $\alpha \neq 0$). Introduce the notation $T_1''$ and $T_2''$ for the maximal periodic tubes corresponding to $T_1$ and $T_2$.
    	
    	We treat the case $n = 3$ first. Assume that $l_1 > N(\mathcal{R}_1, \varepsilon/5) L_1$ for the sake of contradiction. We may choose a segment $\gamma_1''$ of length $N(\mathcal{R}_1, \varepsilon/5) L_1$ parallel to $\gamma_1$, lying on the boundary of $(T_1'')^\infty$, such that $\gamma_1''$ is at a distance of (say) at least $\varepsilon/5$ to $\partial (T_2'')^\infty$. This can be done e.g. by looking at $S_2 \cap \partial (T_1'')^\infty$, i.e. translating $\gamma_1$ in the strip $S_2$ until we hit the boundary of $(T_1'')^\infty$. But by Lemma \ref{lemma:singsetdense} below (or the proof of Theorem \ref{th:general-dynamics}), there exists a singular point lying on the boundary of each slice of the periodic tube $(T_1'')^\infty$ of height $L_1$. By the invariance of the singular set under the map $(x, t) \mapsto (\mathcal{R}_1x, t + L_1)$ and by the definition of $N(\mathcal{R}_1, \varepsilon/5)$, such a singular point leaves a $\varepsilon/5$-dense trace when projected to the boundary of the cross-section $\Omega_1''$ of $T_1''$, the projections taken over the union of $N(\mathcal{R}_1, \varepsilon/5)$ copies of periodic slices in $(T_1'')^\infty$ of height $L_1$. Therefore, the $\varepsilon/5$-neighbourhood of the segment $\gamma_1''$ must contain singular points, which also lie in the interior of $(T_2'')^\infty$, giving a contradiction. 
    	
    	Therefore, we must have $l_1 \leq N(\mathcal{R}, \varepsilon/5) L_1$ and by an analogous argument, we obtain $l_2 := \frac{h_1}{\sin \alpha} \leq N(\mathcal{R}, \varepsilon/5) L_2$. We combine these two inequalities together with \eqref{eq:width} to get the estimate 1. for $n = 3$.
    	
    	For an arbitrary value of $n$, the proof is the same up to the point where we consider the intersection of the singular set with $\partial (T_1'')^\infty$. Denote by $\Omega_1''$ the cross-section of $T_1''$. Since $\Omega_1''$ is invariant under $\mathcal{R}_1$, we know that $\mathcal{R}_1$ acts on $\partial \Omega_1''$ and we may write
    	\[\partial \Omega_1'' = \cup_{i \in I} \mathcal{O}_i,\]
    	where $\mathcal{O}_i$ are minimal closed orbits of the action of $\mathcal{R}_1$ restricted to $\partial \Omega_1''$. By the proof of Lemma \ref{lemma:singsetdense}, if we denote by $\mathcal{T}_1$ the projection to $\partial \Omega_1''$ of the singular points in a slice of height $L_1$ in $\partial (T_1'')^\infty$, the projections of all singular points on $\partial (T_1'')^\infty$ are given by the forward orbit of $\mathcal{T}_1$ under $\mathcal{R}_1$. By the result of Lemma \ref{lemma:singsetdense}, this means that for each point $x \in \partial \Omega_1''$ and a $\delta > 0$, there exists a $y \in \mathcal{T}_1$ whose forward orbit under $\mathcal{R}_1$ is $\delta$-close to $x$. In other words, the union of orbits $\mathcal{O}_i$ containing singular points is dense in $\partial \Omega_1''$. Observe also that $\Omega_1'' \subset B_{\diam P}$, so the $(\varepsilon/5 + \delta)$-neighbourhood of the $\gamma_1''$ as constructed in the previous paragraph, for any $\delta > 0$, will contain singular points under the assumption that $l_1 > N(\mathcal{R}_1, \varepsilon/5) L_1$. But for a small enough $\delta > 0$, the $(\varepsilon/5 + \delta)$-neighbourhood of $\gamma_1''$ is contained in $T_2''$, giving a contradiction. This completes the missing step in the proof above, and proves the estimate 1. in full.

    	If $n = 2$, we have $N(\mathcal{R}, \varepsilon) = 1$ always since $SO(1)$ is trivial, so \eqref{angleestimaten=2} is a consequence of \eqref{angleestimate}. If $P$ is rational, for $\varepsilon > 0$ sufficiently small we have $N(\mathcal{R}, \varepsilon) = o$ where $o$ is the order of the element $\mathcal{R}$ (cf. Example \ref{ex:defN}), so \eqref{angleestimaterational} is a consequence of \eqref{angleestimate}.
    \end{proof}
    
    \begin{rem}\rm
        By the argument in the second to last paragraph in the proof of Lemma \ref{lem:angle_est}, we deduce that one may take $L(\varepsilon) = N(\mathcal{R}, \varepsilon) L$ in Lemma
        \ref{lemma:singsetdense}.
    \end{rem}
    
    We are in a position to prove the main asymptotic estimate on the lengths of closed orbits missing an $\varepsilon$-neighbourhood $U_\varepsilon$ of the singular set $\mathcal{S}$ in $D$. Note that by Theorem \ref{thm:fintubecond}, the number of  periodic tubes whose central geodesic misses $U_\varepsilon$ is finite -- denote this number by $M(P, \varepsilon) = M(\varepsilon) \in \mathbb{N}_0$. 
    \begin{theorem}\label{thm:partition}
    	Let $\varepsilon > 0$. Enumerate the immersed periodic tubes whose central geodesics do not hit $U_\varepsilon$ by $\{T_i : i = 1, \dotso, M(\varepsilon)\}$. Denote the length of $T_i$ by $L_i$ and the associated rotation by $\mathcal{R}_i$. 
    	Then, there exists a universal constant $C = C(n) > 0$, such that for $\varepsilon$ small enough
    	\begin{equation}\label{eq:cylindersbound0}
    	    \sum_{i = 1}^{M(\varepsilon)} \frac{1}{\big(N(\mathcal{R}_i, \varepsilon/5)\big)^{n-1} L_i^{n-2}} \leq C\frac{\vol(P)}{\varepsilon^{2n-2}}.
    	\end{equation}
    	In particular, if $P$ is rational or $\dim P = 2$ we have as $\varepsilon \to 0$
    	\begin{equation}\label{eq:cylindersbound}
    	    \sum_{i = 1}^{M(\varepsilon)} \frac{1}{L_i^{n - 2}} = O\big(\varepsilon^{-2(n - 1)}\big).
    	\end{equation}
    \end{theorem}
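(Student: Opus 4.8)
\emph{Proof strategy.} The plan is a volume-packing argument carried out in the unit sphere bundle $SD_0$ rather than in $D_0$ itself. To each periodic tube $T_i$ we attach an $n$-dimensional lift inside $SD_0$, thicken it in the fibre ($S^{n-1}$) directions by an amount governed by the dynamical invariant $N(\mathcal{R}_i,\varepsilon/5)$, and then use the angle estimate of Lemma \ref{lem:angle_est} to show the resulting $(2n-1)$-dimensional sets are embedded and pairwise disjoint; comparing their total volume with $\vol(SD_0)=\vol(S^{n-1})\cdot 2\vol(P)$ produces \eqref{eq:cylindersbound0}. \emph{Set-up.} For each $i$ let $\gamma_i$ be the central geodesic of $T_i$; since $\gamma_i$ misses $U_\varepsilon$ we have $\dist(\gamma_i,\mathcal{S})\geq\varepsilon$. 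Write $L_i$ for the (minimal) period of $\gamma_i$, taken to coincide with the length of $T_i$, and $\mathcal{R}_i$ for the associated rotation, and set $N_i:=N(\mathcal{R}_i,\varepsilon/5)$. As $D_0$ is flat and local isometries are $1$-Lipschitz, the radius-$r_0$ parallel flow-out of $\gamma_i$, with $r_0:=\varepsilon/10$, avoids $\mathcal{S}$ and defines an immersed solid sub-tube $\hat F_i\colon B_{r_0}\times[0,L_i]\to D_0$; in fact the same reasoning shows $T_i$ contains the radius-$\varepsilon$ sub-tube about $\gamma_i$ (cf.\ Proposition \ref{prop:maximaltube}). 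The $L_i$ are bounded below by a fixed constant $c_0=c_0(P)>0$, and by Theorem \ref{thm:fintubecond} the index set is finite.

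\emph{The packing sets.} Fix a small dimensional constant $c_1>0$ (to be pinned down below) and put $\beta_i:=c_1\varepsilon/(N_iL_i)$, which is small for $\varepsilon$ small since $N_iL_i\geq c_0$. Define
$$\Phi_i\colon B_{r_0}\times[0,L_i)\times B^{S^{n-1}}_{\beta_i}\longrightarrow SD_0,\qquad \Phi_i(x,t,\zeta)=\bigl(\hat F_i(x,t),\,w(x,t,\zeta)\bigr),$$
where $w(x,t,\zeta)$ is the unit vector at angular distance $\zeta$ from the flow direction $d\hat F_i(\partial_t)$ and $B^{S^{n-1}}_{\beta_i}$ is a geodesic ball of radius $\beta_i$ in $S^{n-1}$; let $V_i$ be the image. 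Since $\Phi_i$ is built from a local isometry on the base and a measure-preserving map on the fibre, \emph{provided} $\Phi_i$ is injective we have $\vol(V_i)\asymp_n r_0^{n-1}L_i\beta_i^{n-1}\asymp_n \varepsilon^{2n-2}\big/\bigl(N_i^{n-1}L_i^{n-2}\bigr)$.

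\emph{Embeddedness and disjointness (the crux).} We claim that, for $\varepsilon$ small, each $\Phi_i$ is injective and the $V_i$ are pairwise disjoint. Suppose $\Phi_i(x_1,t_1,\zeta_1)=\Phi_j(x_2,t_2,\zeta_2)=:(m,w)$, where we allow $i=j$. If $i=j$ and $(x_1,t_1)=(x_2,t_2)$ then $\zeta_1=\zeta_2$ and there is nothing to prove. Otherwise the two billiard trajectories of $T_i$ and $T_j$ through $x_1$, $x_2$ are parallel copies of $\gamma_i$, $\gamma_j$ lying in the radius-$r_0$ sub-tubes, both pass within $2r_0$ of $m$, and make an angle $\alpha\leq\beta_i+\beta_j$ with each other. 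If $\alpha=0$, these two trajectories are parallel and within $2r_0<\varepsilon$ of each other, so (using that $\gamma_i,\gamma_j$ miss $U_\varepsilon$, that distinct maximal tubes cannot share a trajectory by Lemma \ref{lem:symbol_same_direction_same}, and that $L_i$ is the minimal period) one gets $T_i=T_j$ and the two preimages agree. If $\alpha\neq0$, a further parallel translation inside $T_i$ — admissible since $T_i$ contains the radius-$\varepsilon$ sub-tube about $\gamma_i$ — produces genuinely intersecting parallel copies of $\gamma_i$ and $\gamma_j$ meeting at angle $\alpha$, so Lemma \ref{lem:angle_est} gives $\sin\alpha\geq c\,\varepsilon\max\bigl(1/(N_iL_i),\,1/(N_jL_j)\bigr)$ for a dimensional constant $c$ (the lemma is stated for radius $\varepsilon/10$; passing to radius $r_0$ only changes the constant). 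But $\alpha\leq\beta_i+\beta_j\leq 2c_1\varepsilon\max\bigl(1/(N_iL_i),1/(N_jL_j)\bigr)$, hence $\sin\alpha\geq(c/2c_1)\alpha$; choosing $c_1<c/2$ from the outset contradicts $\sin\alpha\leq\alpha$. The same computation with $i=j$ rules out self-overlap, so each $V_i$ is embedded and the $V_i$ are disjoint.

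\emph{Conclusion.} Summing the volumes, $\sum_i\vol(V_i)\leq\vol(SD_0)=2\vol(S^{n-1})\vol(P)$, and inserting the estimate for $\vol(V_i)$ yields \eqref{eq:cylindersbound0} with a constant $C=C(n)$ absorbing the dimensional factors. For \eqref{eq:cylindersbound}: when $n=2$ we have $N_i=1$ and $L_i^{n-2}=1$, so the left-hand side is just $M(\varepsilon)$ and the bound reads $M(\varepsilon)=O(\varepsilon^{-2})$; when $P$ is rational, $\mathcal{R}_i$ has finite order $o_i$, which divides the order of the finite group $G$ generated by the face reflections, so $N_i=o_i\leq|G|$ for $\varepsilon$ small by Example \ref{ex:defN}, whence $\sum_i L_i^{-(n-2)}\leq|G|^{n-1}\sum_i\bigl(N_i^{n-1}L_i^{n-2}\bigr)^{-1}=O(\varepsilon^{-2(n-1)})$. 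The main obstacle is the third step: one must upgrade the mere overlap of the thickened lifts — which only provides two trajectories that are close in position and nearly parallel — into a configuration to which Lemma \ref{lem:angle_est} literally applies (genuinely intersecting parallel copies of the central geodesics in tubes of the prescribed radius), and then keep careful track of the numerical constants $r_0$, $\beta_i$, $c_1$ so that $\sin\alpha\geq(c/2c_1)\alpha$ is genuinely contradictory; a subsidiary point is the exclusion of the degenerate parallel case via maximality of the tubes and uniqueness of symbolic codes.
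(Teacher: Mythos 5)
Your proposal is correct and follows essentially the same route as the paper: a phase-space packing argument in $SD_0$ using radius-$\varepsilon/10$ sub-tubes around the central geodesics, thickened in the fibre by an angular radius comparable to $\varepsilon/(N(\mathcal{R}_i,\varepsilon/5)L_i)$, with disjointness/embeddedness supplied by the angle estimate of Lemma \ref{lem:angle_est} and the conclusion obtained by comparing $\sum_i \vol(V_i)$ with $\vol(SD_0)$, then specialising via Example \ref{ex:defN} for the rational and two-dimensional cases. Your write-up is in fact somewhat more explicit than the paper's on the disjointness step (including the parallel case), but the decomposition, key lemma and volume bookkeeping coincide.
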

    \begin{proof}
    We first consider the case $n = 3$. The idea of the proof is to partition the phase space into small distinct volumes around periodic tubes and to estimate these suitably using Lemma \ref{lem:angle_est}. For this purpose, we introduce the sets
    \begin{equation}\label{eq:vi}
        V_i := \Big\{(x, \theta) \in SD_0 \,:\, x \in T_i, \, \,|\dot{\gamma}_i(x) - \theta| < \frac{4\varepsilon/5}{2 N(\mathcal{R}_i, \varepsilon/5) L_i}\Big\},
    \end{equation}
    where $\dot{\gamma}_i(x)$ is the unit speed of the geodesic at $x$ in the direction of the immersed tube $T_i$. The distance $|\cdot|$ is taken to be the spherical distance on the sphere $S_xD_0$ over $x$. We may take $T_i$ to have a disc cross-section of radius $\varepsilon/10$. Observe that the tubes are disjoint and have no self-intersections by Lemma \ref{lem:angle_est}. Then $V_i$ have volume equal to the one of a tube of radius $\varepsilon/10$ and length $L_i$ times a spherical cap of spherical radius $\frac{4\varepsilon/5}{2 N(\mathcal{R}_i, \varepsilon/5) L_i}$. Thus we may estimate the volume of $V_i$ in phase space
    \begin{align}\label{eq:volumes}
    \begin{split}
        \vol(V_i) &= L_i \times \Big(\frac{\varepsilon}{10}\Big)^2 \pi \times 2\pi \Big(1 - \cos\Big(\frac{4\varepsilon/5}{2 N(\mathcal{R}_i, \varepsilon/5) L_i}\Big)\Big)\\
        &= C' \varepsilon^2 \times L_i \times \sin^2\Big(\frac{\varepsilon}{5 N(\mathcal{R}_i, \varepsilon/5)  L_i}\Big)\\
        &\geq C'\frac{\varepsilon^4}{\big(N(\mathcal{R}_i, \varepsilon/5)\big)^2 L_i}.
    \end{split}
    \end{align}
    Here $C' > 0$ is a constant changing from line to line. 
    The last inequality in \eqref{eq:volumes} follows by observing that the lengths of closed orbits in $P$ are bounded from below by some $c(P) > 0$, so for $\varepsilon$ small enough we may apply $\sin t \geq t/2$ valid for $t \in [0, \frac{\pi}{3}]$. 
    
    Now taking the disjoint sets $V_i$ as in \eqref{eq:vi}, for $\varepsilon$ small enough and $i = 1, \dotso, M(\varepsilon)$, summing the volumes and using the estimate \eqref{eq:volumes}, we obtain the main asymptotic estimate \eqref{eq:cylindersbound0}. In the general case, the proof is completely analogous to the proof above and we will omit it. If $P$ is rational or $\dim P = 2$, the constants $N(\mathcal{R}, \varepsilon)$ are explicitly known (see Example \ref{ex:defN}), so one obtains \eqref{eq:cylindersbound}.
    \end{proof}
    
    \begin{rem}\rm
        In the case of polygons, i.e. $n = 2$, this lemma shows the cylinder (finite tube) condition of \cite{HHM} directly. More precisely, for the sum on the left of \eqref{eq:cylindersbound} to be finite, we would need it to contain finitely many elements, since for the case $n = 2$ every summand is exactly equal to one.
        Note that for $n = 3$, we do not immediately get the analogous claim since we have the $\frac{1}{(N(\mathcal{R}_i, c\varepsilon))^2 L_i}$ ``weights'' in \eqref{eq:cylindersbound0}. However, one may use a slightly different argument (see below).
    \end{rem}
    
    Using the angle estimate in Lemma \ref{lem:angle_est}, we may re-prove the finite tube condition obtained in Theorem \ref{thm:fintubecond}.
    
    \begin{proof}[Proof of Theorem \ref{thm:fintubecond} using the angle estimate]
    	To prove the claim, let us assume the contrary. That is, assume there are infinitely many periodic tubes $T_i$ of length $L_i$ with associated rotations $\mathcal{R}_i$, for $i \in \mathbb{N}$, that contain trajectories missing $U_\varepsilon$. Consider such trajectories $\gamma_i$ lying in $T_i$ and missing $U_\ep$, generated by some $(x_i, \theta_i) \in SD_0$. 
    	Since there are infinitely many distinct tubes, due to the compactness of the phase space we have that a subsequence, still called $(x_i, \theta_i)$ after relabelling, converges to $(x, \theta) \in SD_0$. By a straightforward continuity argument, the future trajectory $\gamma$ determined by $(x, \theta)$ also stays in $D_0 \setminus U_\ep$. By Theorems \ref{th:general-dynamics} and  \ref{th:non-periodic} such a geodesic $\gamma$ is contained in a periodic, limiting tube $T$ of length $L$ and with the associated rotation $\mathcal{R}$.
    	
    	Consider the immersed tubes $T_i', T'$ of radius $\varepsilon/10$ centred at $x_i, x$ and with directions given by $\theta_i, \theta$ (directions of $\gamma_i, \gamma$), respectively, for any $i \in \mathbb{N}$. 
    	If $\alpha_i = \angle (T_i', T') = \angle (\theta_i, \theta)$ is the angle of these intersecting tubes for each $i \in \mathbb{N}$, by applying Lemma \ref{angleestimate} we obtain
    	\begin{equation}\label{eq:lowerbound}
    	    \sin \alpha_i \geq \frac{4 \ep/5}{N(\mathcal{R}, \varepsilon/5) L}.
    	\end{equation}
    	Here $N(\mathcal{R}, \varepsilon/5)$ is the quantity defined in \eqref{eq:defproperty}. But since $(x_i, \theta_i) \to (x, \theta)$, we must have $\alpha_i \to 0$, which is a contradiction to \eqref{eq:lowerbound}.
    \end{proof}
    
    \begin{rem}\rm
        In the statements above, for simplicity we mostly ignored the situation where the associated map $\mathcal{R}$ to a periodic tube is in $O(n)\setminus SO(n)$ and instead dealt with the tube of doubled length whose associated map is orientation preserving. However, the analogous statement of Lemma \ref{angleestimate} holds for such tubes, and this may be applied to subsequent proofs of Lemma \ref{angleestimate} and Theorem \ref{thm:fintubecond} to obtain analogous results.
    \end{rem}
    
        
    \section{A control estimate with an almost periodic boundary condition}\label{sec:control_theory}
    
    In this section we prove the main control estimate, on a product space with an almost periodic boundary condition. Since we were unable to locate an appropriate reference, we start by establishing some control-theoretic preliminaries. As a general reference for the semiclassical analysis used in this section, the reader is referred to \cite{Zw} or \cite[Appendix E]{DZ}.
    

    To this end, let $(M_x, g_x)$ be a compact Riemannian manifold with Lipschitz boundary. Denote by $-\Delta_g$ the positive-definite Laplace-Beltrami operator. We say a subset $A \subset M_x$ satisfies the \emph{geometric control condition} or just \emph{(GCC)} if every geodesic $\gamma$ in $M_x$ hits $A$ in finite time.

    \begin{lemma}\label{lemma:auxiliary}
        Let $\omega \subset M_x$ be an open neighbourhood of the boundary, satisfying (GCC). There exists a $C = C(g_x, \omega) > 0$, such that for any $s \in \mathbb{R}$ and any $v$ satisfying 
        \begin{align}
            (-\Delta_{g_x} - s)v = g, \quad v|_{\partial M_x} = 0,
        \end{align}
        with $v \in H_0^1(M_x)$ and $g \in H^{-1}(M_x)$, we have the apriori estimate
        \begin{align}\label{eq:apriori}
            \lVert{v}\rVert_{L^2(M_x)} \leq C(\lVert{g}\rVert_{H^{-1}(M_x)} + \lVert{v|_{\omega}}\rVert_{L^2(\omega)}).   
        \end{align}
    \end{lemma}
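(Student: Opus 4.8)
The plan is to argue by contradiction and compactness, in the standard fashion for resolvent estimates under the geometric control condition. Suppose the estimate \eqref{eq:apriori} fails. Then there are sequences $s_n \in \mathbb{R}$, $v_n \in H_0^1(M_x)$ and $g_n \in H^{-1}(M_x)$ with $(-\Delta_{g_x} - s_n) v_n = g_n$, such that $\lVert v_n \rVert_{L^2(M_x)} = 1$ while $\lVert g_n \rVert_{H^{-1}(M_x)} + \lVert v_n|_\omega \rVert_{L^2(\omega)} \to 0$. The first dichotomy is on the behaviour of $s_n$. If $s_n \to -\infty$ or if $s_n$ stays in a compact set away from the spectrum, elliptic estimates (pairing the equation with $v_n$ and using $\lVert v_n \rVert_{L^2} = 1$) directly force $\lVert v_n \rVert_{L^2} \to 0$, a contradiction; more precisely, testing against $\bar v_n$ gives $\lVert \nabla v_n \rVert_{L^2}^2 = s_n + \langle g_n, v_n\rangle$, so if $s_n$ is bounded then $v_n$ is bounded in $H_0^1$, hence has a subsequence converging strongly in $L^2$ to some $v$ with $\lVert v \rVert_{L^2} = 1$. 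So the only interesting regime is $s_n \to +\infty$, and I will pass to this case and write $h_n = s_n^{-1/2} \to 0$, so that $v_n$ solves the semiclassical equation $(-h_n^2 \Delta_{g_x} - 1) v_n = h_n^2 g_n =: \tilde g_n$ with $\lVert \tilde g_n \rVert_{H_{\mathrm{scl}}^{-1}} = o(h_n)$ (after rescaling norms appropriately).

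\textbf{Semiclassical defect measure.} Next I would extract a semiclassical defect (Wigner) measure $\mu$ associated to the bounded-in-$L^2$ sequence $(v_n)$, living on $T^*M_x$ (or on $S^*M_x$ together with a possible boundary/glancing component). Standard arguments — exactly those indicated at the end of Section~\ref{subsec:PDOmappingtorus} and in the Burq--Zworski framework of \cite{BZ1}, \cite{Zw} — give: (i) $\mu$ is a nonnegative measure; (ii) since $\lVert v_n \rVert_{L^2(M_x)} = 1$ and $(v_n)$ is $h_n$-oscillating (no mass escaping to $\xi = \infty$, which follows from the equation $(-h_n^2\Delta - 1)v_n = \tilde g_n$ with controlled right-hand side), we have $\mu(S^*M_x) = 1$, in particular $\mu \neq 0$; (iii) the equation forces $\supp \mu \subset \{|\xi|_{g_x} = 1\}$; (iv) $\mu$ is invariant under the (generalized, Melrose--Sj\"ostrand) geodesic flow, using that the right-hand side is $o(h_n)$ in $H^{-1}$ and the Dirichlet condition on the Lipschitz boundary so that reflected geodesics carry the measure through $\partial M_x$; and (v) from $\lVert v_n|_\omega \rVert_{L^2(\omega)} \to 0$ one gets $\mu = 0$ over $\omega$, i.e. $\mu(\pi^{-1}(\omega)) = 0$ where $\pi: S^*M_x \to M_x$. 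Combining invariance under the flow with the fact that $\omega$ satisfies (GCC) — every geodesic meets $\omega$ in finite time — every point of $\supp\mu$ lies on a geodesic that enters $\pi^{-1}(\omega)$, where $\mu$ vanishes; by flow-invariance $\mu$ must then vanish there too, so $\mu \equiv 0$, contradicting $\mu(S^*M_x) = 1$. This contradiction proves \eqref{eq:apriori}.

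\textbf{Main obstacle.} The one genuinely delicate point is step (iv): propagation of the defect measure \emph{through the boundary} when $\partial M_x$ is merely Lipschitz and when glancing rays may be present. For smooth boundaries this is the Melrose--Sj\"ostrand theorem and is classical; for a Lipschitz boundary one must either invoke the broken-geodesic/billiard flow set-up appropriate to our polyhedral context (where the boundary pieces are totally geodesic flat faces and the geometry of reflection is explicit), or restrict attention, as is sufficient for the application, to the open manifold away from the singular set where reflections are transversal and the usual reflection law for defect measures applies. I would also need to be slightly careful that the ``$h_n$-oscillation'' property (no mass at infinite frequency) genuinely holds: this is where the hypothesis $g \in H^{-1}$, rescaled to $\lVert \tilde g_n\rVert = o(h_n)$ in the semiclassical $H^{-1}$ norm, is used — testing the equation against a frequency cutoff $\psi(h_n D) v_n$ shows no $L^2$ mass concentrates in the region $\{|\xi| \geq R\}$ as $R \to \infty$ uniformly in $n$. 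Everything else is routine once the defect measure machinery is in place.
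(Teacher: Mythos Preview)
Your overall strategy---contradiction, case split on $s_n$, and semiclassical defect measures for $s_n \to +\infty$---matches the paper's approach. Two points deserve comment.

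First, a small gap: in the bounded-$s_n$ case you correctly extract a strong $L^2$ limit $v$ with $\lVert v\rVert_{L^2}=1$, but you never derive a contradiction when $s_n$ accumulates at an eigenvalue (your phrase ``away from the spectrum'' does not cover this). The paper closes this by noting that the limit satisfies $(-\Delta_{g_x}-s)v=0$ with $v|_\omega=0$; elliptic regularity gives $v$ smooth in the interior, and since $\omega$ is open, the unique continuation principle forces $v\equiv 0$, contradicting $\lVert v\rVert_{L^2}=1$. You should add this step.

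Second, and more importantly, your ``main obstacle'' is not an obstacle at all, and you have overlooked the hypothesis that dissolves it. The lemma assumes $\omega$ is a \emph{neighbourhood of the boundary} $\partial M_x$. This is precisely what allows the paper to avoid any boundary propagation of the defect measure: one works only with invariance under the geodesic flow in the interior of $M_x$ (the paper says so explicitly in a footnote). Since $\lVert v_n\rVert_{L^2(\omega)}\to 0$ and $\omega$ contains a collar of $\partial M_x$, no $L^2$ mass escapes to the boundary, so $\mu$ is a probability measure on the interior cosphere bundle. Any interior geodesic that heads toward $\partial M_x$ must enter $\omega$ before reaching it; combined with (GCC) in the interior, propagation \emph{in the interior alone} gives $\mu=0$. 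There is no need for Melrose--Sj\"ostrand, broken bicharacteristics, or any analysis at a Lipschitz boundary. Your proposed route would be the right one for a general open $\omega$ satisfying (GCC), but then one genuinely needs boundary propagation (cf.\ the second Remark following the lemma in the paper); for the lemma as stated, you are making life much harder than necessary.
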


    \begin{proof}
        Assume without loss of generality $g, v$ are real-valued. We split the proof to cases according to the value of $s \in \mathbb{R}$. We will use \emph{semiclassical defect measures} in the case of large $s$.
        
        \emph{Case 0: fixed $s$.} Assume inequality \eqref{eq:apriori} does not hold for this  fixed $s$, so there is a sequence $v_k \in H_0^1(M_x)$ with $\lVert{v_k}\rVert_{L^2(M_x)} = 1$ and $g_k \in H^{-1}(M_x)$ with
        \begin{align}
            \lVert{g_k}\rVert_{H^{-1}(M_x)} + \lVert{v_k|_\omega}\rVert_{L^2(\omega)} < \frac{1}{k}
        \end{align}
        for any $k \in \mathbb{N}$. Then clearly $\lVert{g_k}\rVert_{H^{-1}(M_x)} \to 0$ and $\lVert{v_k|_\omega}\rVert_{L^2(\omega)} \to 0$ as $k \to \infty$. Elliptic estimates give us that $\lVert{v_k}\rVert_{H^1(M_x)}$ is bounded uniformly in $k$,\footnote{By elliptic estimates, we have $\lVert{v}\rVert_{H^1(M_x)} \leq C'(\lVert{g}\rVert_{H^{-1}(M_x)} + \lVert{v}\rVert_{L^2(M_x)})$ for some $C' > 0$, for $v$ and $g$ as in the statement of the Lemma.} so by Rellich's theorem we may assume $v_k \to v$ in $L^2(M_x)$ after a possible re-labelling. By the assumptions, we have in the sense of distributions
        \[(-\Delta_{g_x} - s) v = 0, \quad v|_{\omega} = 0.\]
        Elliptic regularity gives $v$ is $C^\infty$ in the interior of $M_x$ and therefore by the unique continuation principle $v \equiv 0$, which contradicts $\lVert{v}\rVert_{L^2(M_x)} = 1$.
        
        \emph{Case 1: bounded $s$.} Here we show that the constant $C = C(s)$ in \eqref{eq:apriori} is locally bounded. The proof is by contradiction and is very similar to the previous case, so we omit it.
        
        \emph{Case 2: $s < -\varepsilon < 0$.} In this case $1 > \varepsilon > 0$ is fixed. Integrating by parts, we see
        \begin{align}\label{eq:intbyparts}
            \lVert{dv}\rVert^2_{L^2(M_x)} - s\lVert{v}\rVert^2_{L^2(M_x)} = \langle{g, v}\rangle_{H^{-1}(M_x) \times H_0^1(M_x)}.
        \end{align}
        Estimating right hand side using the boundedness of the $H^{-1} \times H_0^{1}$ pairing and using $s < - \varepsilon$, we obtain $\lVert{v}\rVert_{L^2(M_x)} \leq \frac{1}{\varepsilon} \lVert{g}\rVert_{H^{-1}(M_x)}$, proving \eqref{eq:apriori} for $s < -\varepsilon$.
        
        \emph{Case 3: $s \to \infty$.} Now it suffices to argue by contradiction and assume there is a sequence $s_k \to \infty$ with $C(s_k) \to \infty$, as $k \to \infty$. Then, there is a sequence $v_k \in H_0^1(M_x)$, with $\lVert{v_k}\rVert_{L^2(M_x)} = 1$ such that
        \begin{align}
            (-\Delta_{g_x} - s_k) v_k = g_k,
        \end{align}
        with $\lVert{g_k}\rVert_{H^{-1}(M_x)} \to 0$ and $\lVert{v_k}\rVert_{L^2(\omega)} \to 0$ as $k \to \infty$. We introduce a (small) semiclassical parameter $h_k > 0$ by $h_k^2 := \frac{1}{s_k}$. Then we have\footnote{Recall that in $\mathbb{R}^n$, $\lVert{u}\rVert_{H^{-1}_h} = \lVert{\langle{h\xi}\rangle^{-1} \widehat{u}(\xi)}\rVert_{L^2}$ and $\lVert{u}\rVert_{H^{-1}} = \lVert{\langle{\xi}\rangle^{-1} \widehat{u}(\xi)}\rVert_{L^2}$, where $\langle{\xi}\rangle = (1 + |\xi|^{2})^{\frac{1}{2}}$. From definitions and since $g_k = o_{H^{-1}} (1)$, we have locally $\lVert{h_k^2 g_k}\rVert_{H^{-1}_{h_k}} \leq h_k \lVert{g_k}\rVert_{L^2} = o(h_k)$ for $h_k < 1$ and as $k \to \infty$.}
        \begin{align}
            (-h_k^2 \Delta_{g_x} - 1) v_k = o_{H^{-1}_h}(h_k).
        \end{align}
        So by \cite[Theorems E.42, E.43, E.44]{DZ}, we know there is a semiclassical (Radon) measure $\mu$ on $S^*M_x$ associated to a subsequence $v_k$ (after re-labelling), such that it is invariant under the geodesic flow\footnote{In this proof we only work with the invariance of the semiclassical measure under the flow in the interior. For the invariance under the billiard (broken geodesic) flow, the reader is referred to \cite{ZZ96}.} of $g_x$ and
        \begin{align}\label{eq:semiclmeasure}
            \langle{a(x, h_kD) v_k, v_k}\rangle_{L^2(M_x)} \to \int_{S^*M_x} a d\mu, \quad \mathrm{\,\, as \,\,} k \to \infty.
        \end{align}
        Here $a \in C_0^\infty(T^*M_x)$, compactly supported in the interior of $M_x$ and $a(x, hD)$ is a $\Psi$DO obtained by the semiclassical quantisation procedure on (the interior of) $M_x$.
        
        Now, since $\lVert{v_k}\rVert_{L^2(\omega)} \to 0$ as $k \to \infty$, we have by \eqref{eq:semiclmeasure} that $\mu = 0$ on $\pi^{-1}(\omega)$, where\,\, $\pi: S^*M_x \to M_x$ is the projection. Since $\mu$ is invariant by the geodesic flow and $\omega$ satisfies (GCC), we thus have $\mu \equiv 0$. But since we assumed no mass escapes to the boundary, we have that $\mu$ is a probability measure on $S^*M_x$ (cf. Proposition \ref{prop:dis_supp}), which is a contradiction.
        \end{proof}
        
\begin{rem}\rm
    
    The case of Lemma \ref{lemma:auxiliary} for $M_x = [0, a]$ was considered in \cite{BZ}, by using elementary means to prove the inequality directly. Note also that if $M_x \subset \mathbb{R}^n$, then any open neighbourhood of the boundary automatically satisfies the (GCC).
\end{rem}

\begin{rem}\rm
    In the theorem above, the condition that $\omega$ is a neighbourhood of the boundary is used to prevent the mass to escape from the interior (see \cite{HT02} for a very similar condition, but in a different context). An example where Dirichlet eigenfunctions concentrate at the boundary is given by the upper hemisphere of $S^2 \subset \mathbb{R}^3$ and eigenfunctions
    \[u_l = c_l e^{i(l - 1)\varphi} (\sin \theta)^{l - 1} \cos \theta\]
    for $l \in \mathbb{N}$, of eigenvalue $\lambda = l(l + 1)$. Here $c_l \sim l^{3/4}$ is the $L^2$-normalisation constant and $(\theta, \varphi)$ are the spherical coordinates. Then one may show that $u_l$ concentrates on the equator $z = 0$. However, it seems that using the techniques as in \cite[Theorem 8]{BZ1}, one could deal with this and prove a more general version of Lemma \ref{lemma:auxiliary}, for $\omega$ any open set satisfying (GCC). 
\end{rem}

We are now in shape to prove the main theorem of this section, which generalises a result of N. Burq given in the Appendix \ref{app:B}. As before, let $(M_x, g_x)$ be a compact Riemannian manifold with Lipschitz boundary.

\begin{theorem}\label{thm:cont_per_cond}
    Let $\varphi: M_x \to M_x$ be an isometry. Assume that $u \in H^1_{\loc}(M_x \times \mathbb{R}) \cap C(\mathbb{R}, H^1(M_x))$, such that $u(x, t + L) = u(\varphi(x), t)$ for all $(x, t) \in M_x \times \mathbb{R}$, for some $L > 0$. Define $\mathcal{C}_\varphi := M_x \times [0, L] / (x, L) \sim (\varphi(x), 0)$ to be the mapping torus determined by $\varphi$, with the inherited Riemannian metric from $M_x \times \mathbb{R}$. Assume $u$ satisfies, for some $s\in \mathbb{R}$
    \begin{align}\label{eq:elleq1}
        (- \Delta_{g_x} - \partial_t^2 - s) u = f \,\,\, \mathrm{on} \,\,\, M_x \times \mathbb{R}, \quad u|_{\partial M_x \times \mathbb{R}} = 0,
    \end{align}
    where $f \in H_{\loc}^{-1}(M_x \times \RR) \cap C(\RR, H^{-1}(M_x))$. Let $\omega \subset M_x$ be an open neighbourhood of the boundary satisfying (GCC) and assume $\omega$ invariant under $\varphi$. Denote the mapping torus over $\omega$ by $\omega_\varphi$. Then there exists a constant $C = C(M_x, g_x, \omega) > 0$, such that the following observability estimate holds:
    \begin{align}\label{eq:mainineq}
        \lVert{u}\rVert_{L^2(\mathcal{C}_\varphi)} \leq C(\lVert{f}\rVert_{H^{-1}_xL^2_t(\mathcal{C}_\varphi)} + \lVert{u|_{\omega_\varphi}}\rVert_{L^2(\omega_\varphi)}).
    \end{align}
\end{theorem}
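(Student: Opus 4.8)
The strategy is to reduce the observability estimate on the mapping torus $\mathcal{C}_\varphi$ to the one-dimensional ($t$-direction) picture via a Fourier-type decomposition, and then apply Lemma \ref{lemma:auxiliary} (the apriori estimate on $M_x$) fibrewise. Because $\varphi$ is an isometry, Proposition \ref{prop:iso_adm} tells us $\varphi$ is admissible, so by Lemma \ref{lemma:admissiblealmostperiodic} the map $t \mapsto u(\cdot, t) \in H^1(M_x)$ is almost periodic; similarly $t \mapsto f(\cdot, t) \in H^{-1}(M_x)$ is almost periodic. This lets us expand both $u$ and $f$ into their Bohr–Fourier series $u(t) \sim \sum_k u_k e^{i\lambda_k t}$, $f(t) \sim \sum_k f_k e^{i\lambda_k t}$, with coefficients $u_k, f_k$ in the appropriate Sobolev spaces on $M_x$, sharing a common spectrum $\{\lambda_k\}$ (one passes to the joint spectrum of the pair). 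Plugging into \eqref{eq:elleq1} and matching Bohr coefficients gives the fibrewise elliptic equation
\[
(-\Delta_{g_x} - s + \lambda_k^2) u_k = f_k \quad \text{on } M_x, \qquad u_k|_{\partial M_x} = 0,
\]
to which Lemma \ref{lemma:auxiliary} applies with $s$ replaced by $s - \lambda_k^2$, yielding a \emph{uniform} constant $C$ (independent of $k$, since Lemma \ref{lemma:auxiliary} is uniform in its spectral parameter over all of $\mathbb{R}$):
\[
\lVert u_k \rVert_{L^2(M_x)} \leq C\big(\lVert f_k \rVert_{H^{-1}(M_x)} + \lVert u_k|_\omega \rVert_{L^2(\omega)}\big).
\]

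Next I would square, use $(a+b)^2 \leq 2a^2 + 2b^2$, and sum over $k$. The Parseval identity \eqref{eq:parseval} for Hilbert-space-valued almost periodic functions converts these sums of squared Bohr coefficients back into mean values over $\mathbb{R}$: $\sum_k \lVert u_k \rVert_{L^2(M_x)}^2 = \mathcal{M}\{\lVert u(\cdot,t)\rVert_{L^2(M_x)}^2\}$, and likewise for $f$ and for the restriction $u|_\omega$. The final point is to identify these mean values with the $L^2$-norms on the mapping torus: since $u$ satisfies $u(x, t+L) = u(\varphi(x), t)$ and $\varphi$ is an isometry of $M_x$ (hence an isometry of $\omega$ onto itself, preserving the measure), the function $t \mapsto \lVert u(\cdot, t)\rVert_{L^2(M_x)}^2$ is $L$-periodic, so its Bohr mean value equals $\frac{1}{L}\int_0^L \lVert u(\cdot,t)\rVert_{L^2(M_x)}^2\, dt = \frac{1}{L}\lVert u \rVert_{L^2(\mathcal{C}_\varphi)}^2$. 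The same reasoning applies to $\lVert f(\cdot,t)\rVert_{H^{-1}(M_x)}^2$ (giving $\frac1L \lVert f\rVert_{H^{-1}_x L^2_t(\mathcal{C}_\varphi)}^2$) and to $\lVert u(\cdot,t)|_\omega\rVert_{L^2(\omega)}^2$ (giving $\frac1L\lVert u|_{\omega_\varphi}\rVert^2_{L^2(\omega_\varphi)}$). Taking square roots and absorbing the factor $\sqrt{2}$ and the $L$'s into the constant produces \eqref{eq:mainineq}.

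The main technical obstacle is the careful handling of the Bohr–Fourier machinery in the vector-valued, negative-order setting: one must make sure that $u$ and $f$ can be expanded with a \emph{common} spectrum $\{\lambda_k\}$, that the Parseval identity \eqref{eq:parseval} is legitimately applicable (it is stated for Hilbert-space-valued functions, so $H^{-1}(M_x)$ needs its Hilbert structure, and one should verify $f \in C(\mathbb{R}, H^{-1}(M_x))$ together with the invariance indeed makes $t \mapsto f(\cdot,t)$ almost periodic via Lemma \ref{lemma:admissiblealmostperiodic}), and that differentiating the Bohr expansion term-by-term to extract the equation for $u_k$ is justified — this is cleanest if done at the level of the equation tested against Bochner–Fej\'er approximants rather than formal series, or alternatively by noting that since $\Delta_{g_x}$ and $\partial_t^2$ commute with the structure, applying the Bohr transformation $a(\lambda_k; \cdot)$ directly to \eqref{eq:elleq1} commutes with $-\Delta_{g_x} - s$ and turns $-\partial_t^2$ into $\lambda_k^2$. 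A secondary, milder point is checking that $\omega$ being $\varphi$-invariant and a neighbourhood of $\partial M_x$ ensures $\omega_\varphi$ is a well-defined submanifold of $\mathcal{C}_\varphi$ and that the restriction/measure-theoretic identities above hold; this is routine given that $\varphi$ is an isometry. Once these points are in place, the estimate follows by the linear chain of inequalities described above.
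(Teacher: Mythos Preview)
Your proposal is correct and follows essentially the same approach as the paper's proof: both use Proposition~\ref{prop:iso_adm} and Lemma~\ref{lemma:admissiblealmostperiodic} to get almost periodicity, expand $u$ and $f$ in Bohr--Fourier series, match coefficients to obtain the fibrewise equation $(-\Delta_{g_x} - (s - \lambda_k^2))u_k = f_k$, apply Lemma~\ref{lemma:auxiliary} uniformly in $k$, and then sum via Parseval's identity~\eqref{eq:parseval}, identifying the mean values with norms on $\mathcal{C}_\varphi$ through the $L$-periodicity of $t \mapsto \lVert u(\cdot,t)\rVert^2$. The technical caveats you flag (common spectrum, justifying the action of $-\Delta_{g_x}$ and $\partial_t^2$ on Bohr coefficients, the Hilbert structure on $H^{-1}$) are exactly those the paper handles in passing.
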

\begin{proof}
    We use the theory of almost periodic functions outlined in Section \ref{subsec:almostperiodic}. By Proposition \ref{prop:iso_adm} we know $\varphi$ is admissible, so by Lemma \ref{lemma:admissiblealmostperiodic} we have
    \begin{align*}
        u: \mathbb{R} \ni t \mapsto u(\cdot, t) \in H_0^1(M_x) \subset L^2(M_x)
    \end{align*}
    is almost periodic. We will use the shorthand notation $u(t) := u(\cdot, t) \in L^2(M_x)$ and $f(t) := f(\cdot, t)$. Therefore, there exists a countable set $\{\lambda_n\}_{n = 1}^\infty \subset \mathbb{R}$ such that
    \begin{align}\label{eq:apu}
        u(t) \sim \sum_{n = 1}^\infty a(\lambda_n; u) e^{i\lambda_n t}.
    \end{align}
    Recall the notation $\sim$ denotes a formal association of the series on the right hand side to the almost periodic function $u(t)$. The Fourier-Bohr transformation is given by
    \begin{equation}\label{eq:bohr}
        a(\lambda_n; u) = \lim_{T \to \infty}\frac{1}{2T} \int_{-T}^T u(t) e^{-i \lambda_n t} dt =: \mathcal{M} \{u(t) e^{-i\lambda_n t}\},
    \end{equation}
    where $\mathcal{M}\{\cdot\}$ denotes taking the mean value in $\mathbb{R}$. Therefore, we have $a(\lambda_n; u) =: u_n(x) \in H_0^1(M_x)$. 
    
    Now, since $\varphi$ an isometry so the pullback $\varphi^*$ commutes with $\Delta_{g_x}$, we have $f(x, t + L) = f(\varphi(x), t)$ for all $(x, t)$, so $f(t)$ is also almost periodic with range in $H^{-1}(M_x)$. One can check that, using mapping properties of $-\Delta_{g_x} - s: H_0^{1}(M_x) \to H^{-1}(M_x)$ and integration by parts for the $\partial_t^2$ factor, together with the convergence of the Bohr transform \eqref{eq:bohr}
    \begin{align*}
        (- \Delta_{g_x} - \partial_t^2 - s) u (t) \sim \sum_{n = 1}^\infty (-\Delta_{g_x}  - (s - \lambda_n^2)) u_n(x) \cdot e^{i\lambda_n t}.
    \end{align*}
    Similarly, one has $f(t) \sim \sum_n f_n(x) \cdot e^{i\lambda_n t}$ where $f_n(x)$ are the Fourier-Bohr coefficients of $f(t)$, and by uniqueness of Fourier-Bohr expansions of almost periodic functions, we have for all $n \in \mathbb{N}$
    \begin{align*}
        f_n \equiv (-\Delta_{g_x}  - (s - \lambda_n^2)) u_n.
    \end{align*}
    Intuitively, the above takes the place of a Fourier series expansion for the periodic case. Now for every $n \in \mathbb{N}$ and some $C > 0$, the estimate in Lemma \ref{lemma:auxiliary} yields
    \begin{align}\label{eq:aprioriap}
        \lVert{u_n}\rVert_{L^2(M_x)} \leq C(\lVert{f_n}\rVert_{H^{-1}(M_x)} + \lVert{u_n|_{\omega}}\rVert_{L^2(\omega)}).   
    \end{align}
    Now we sum the inequalities \eqref{eq:aprioriap} and use Parseval's identity for almost periodic functions \eqref{eq:parseval}
    \begin{align}
    \begin{split}
        \frac{1}{L}\lVert{u}\rVert_{L^2(\mathcal{C}_\varphi)}^2 &= \mathcal{M}\{\lVert{u(t)}\rVert^2_{L^2(M_x)}\} = \sum_{n = 1}^\infty \lVert{u_n}\rVert^2_{L^2(M_x)}\\
        &\leq C\sum_{n=1}^\infty \big(\lVert{f_n}\rVert_{H^{-1}(M_x)}^2 + \lVert{u_n|_\omega}\rVert_{L^2(\omega)}^2\big)\\
        &= C\big(\mathcal{M}\{\lVert{f(t)}\rVert_{H^{-1}(M_x)}^2\} + \mathcal{M}\{\lVert{u|_{\omega}}\rVert^2_{L^2(\omega)}\}\big)\\
        &= \frac{C}{L} \big(\lVert{f}\rVert_{H_x^{-1}L^2_t(\mathcal{C}_\varphi)}^2 + \lVert{u|_\omega}\rVert_{L^2(\omega_\varphi)}^2\big).
    \end{split}
    \end{align}
    More precisely, we applied Parseval's identity in the first line to $u$, in the third line to $f$ and $u|_{\omega \times \mathbb{R}}$. Here we also used the identity 
    \[u|_{\omega \times\mathbb{R}} \sim \sum_{n = 1}^\infty u_n|_\omega e^{i\lambda_nt},\]
    which follows upon restricting \eqref{eq:apu} to $\omega \times \mathbb{R}$ and recalling that $\varphi: \omega \to \omega$ is an isometry. We used \eqref{eq:aprioriap} in the second line. In the first and last lines we also used the fact that $\lVert{u(t)}\rVert^2_{H^s(M_x)}$ and $\lVert{f(t)}\rVert^2_{H^s(M_x)}$ are periodic with period $L$ for any allowed $s$ (as $\varphi^*$ is an isometric isomorphism on $H^s(M_x)$), so applying $\mathcal{M}\{\cdot\}$ gives their mean over the interval $[0, L]$. This finishes the proof.
\end{proof}
\begin{rem}\rm
    It might be possible to prove the statement above by using a more direct approach of approximating $u$ by periodic functions on $M_x \times [0, kL]$ for $k$ large, applying Theorem \ref{thm:controlperiodic} and then taking a limiting procedure.
\end{rem}

\begin{rem}\rm\label{rem:spectrum_C_phi}
    It is interesting to compute the spectrum of $\mathcal{C}_\varphi$ in terms of the spectrum of $M_x$ and the map $\varphi$. In fact, as the pullback $\varphi^*$ and $-\Delta_{g_x}$ commute, and since $\varphi^*$ is orthogonal as a map on $L^2$, we may restrict $\varphi^*$ to eigenspaces $E_{\mu_n}$ of $-\Delta_{g_x}$ and thus choose an orthonormal Dirichlet eigenbasis of $M_x$, call it $e_n$, such that $-\Delta_{g_x}e_n = \mu_n e_n$ and $\varphi^*e_n = e^{i\nu_n L} e_n$. The reals $\nu_n$ are defined modulo $\frac{2\pi \mathbb{Z}}{L}$. We set for $n \in \mathbb{N}$ and $k \in \mathbb{Z}$
    \[e_{n, k}(x, t) := e_n(x) e^{it(\nu_n + \frac{2k \pi}{L})},\]
    and observe that $e_{n, k}$ descend to $\mathcal{C}_\varphi$. Also 
    \[(-\Delta_{g_x} - \partial_t^2) e_{n, k} = \Big(\mu_n + (\nu_n + \frac{2k\pi}{L})^2\Big) e_{n, k},\]
    and one may show that $e_{n, k}$ form an orthogonal eigenbasis of $L^2(\mathcal{C}_\varphi)$, with $\lVert{e_{n, k}}\rVert_{L^2}^2 = L$. Observe that a function $u$ in \eqref{eq:apu}, since it descends to $\mathcal{C}_\varphi$, satisfies $u_n(\varphi(x)) = u_n(x) e^{i\lambda_n L}$. In fact, density of $e_n$ may be used to show $e^{i\lambda_nL} = e^{i\nu_{k_n} L}$ for some $k_n \in \mathbb{Z}$ and so $\lambda_n \in \nu_{k_n} + \frac{2\pi \mathbb{Z}}{L}$. This determines the spectrum of $\mathcal{C}_\varphi$ as advertised.
    
    Given $u$ and $f$ as in Theorem \ref{thm:cont_per_cond}, the argument above gives us a way to write $u = \sum_n u_n(x) e^{i\lambda_nt}$ and $f = \sum_n f_n(x) e^{i\lambda_nt}$. However, the $e^{i\lambda_nt}$ are in general not orthogonal with respect to $L^2(0, L)$, which would take us eventually to the operation $\mathcal{M}\{\cdot\}$. To prove \eqref{eq:aprioriap}, we would need a Parseval's identity (cf. \eqref{eq:parseval}). Therefore, in order to avoid the use of almost periodic theory, we would have to re-prove some essential parts of it.
\end{rem}


Having the above estimate with almost periodic boundary conditions at hand, we can recover the control theoretic result with periodic boundary conditions proved and used in \cite{HHM}, which stems from the work of Burq-Zworski \cite{BZ, BZ1}. More precisely, we have

\begin{corollary}
    If we choose $\varphi = \id$ in Theorem \ref{thm:cont_per_cond}, we recover \cite[Proposition 14]{HHM} and \cite[Proposition 6.1]{BZ1}. At the same time, we recover Theorem \ref{thm:product} for the case $M_y = [0, L]$.
\end{corollary}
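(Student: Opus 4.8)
The statement is essentially a bookkeeping exercise: plugging $\varphi = \id$ into Theorem~\ref{thm:cont_per_cond} collapses all the almost-periodic machinery down to ordinary Fourier analysis on the circle, and the mapping torus $\mathcal{C}_{\id}$ becomes the genuine product $M_x \times \mathbb{R}/L\mathbb{Z}$. So the plan is to trace through the special case and match notation. First I would observe that with $\varphi = \id$, the invariance condition $u(x, t+L) = u(\varphi(x), t)$ becomes plain $L$-periodicity in $t$, and the mapping torus $\mathcal{C}_\varphi = M_x \times [0,L]/\!\sim$ is literally $M_x \times S^1_L$, with $\omega_\varphi = \omega \times S^1_L$. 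The Dirichlet boundary condition on $\partial M_x \times \mathbb{R}$ and the elliptic equation $(-\Delta_{g_x} - \partial_t^2 - s)u = f$ are unchanged. Hence Theorem~\ref{thm:cont_per_cond} directly yields
\[
\lVert u \rVert_{L^2(M_x \times S^1_L)} \leq C\big(\lVert f \rVert_{H^{-1}_x L^2_t(M_x \times S^1_L)} + \lVert u|_{\omega \times S^1_L}\rVert_{L^2(\omega \times S^1_L)}\big),
\]
which is exactly the observability/control estimate of \cite[Proposition~6.1]{BZ1} (restated as \cite[Proposition~14]{HHM}) once one unravels that those references phrase the conclusion in terms of a stationary elliptic problem on the product of a manifold with a torus (or interval-with-periodic-identification) circle factor.

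Second, to recover Theorem~\ref{thm:product} in the case $M_y = [0, L]$, I would note that $M_y = [0,L]$ with its endpoints identified is precisely $S^1_L$, so the product space $M_x \times M_y$ appearing there coincides with $\mathcal{C}_{\id}$. The periodic boundary condition on $M_y = [0,L]$ (i.e. matching values and derivatives at $0$ and $L$) is exactly the gluing defining $S^1_L$, so the hypotheses of Theorem~\ref{thm:product} for this case are a verbatim instance of the hypotheses of Theorem~\ref{thm:cont_per_cond} with $\varphi = \id$, and the conclusions coincide.

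Third, for completeness I would remark on where the almost-periodic theory degenerates: when $\varphi = \id$, Proposition~\ref{prop:iso_adm} is trivial ($S(\id, \varepsilon) = \mathbb{Z}$), Lemma~\ref{lemma:admissiblealmostperiodic} just says an $L$-periodic $H^s$-valued function is periodic hence (trivially) almost periodic, and the Fourier--Bohr expansion \eqref{eq:apu} becomes the ordinary Fourier series with spectrum $\{\lambda_n\} \subset \frac{2\pi}{L}\mathbb{Z}$, Parseval \eqref{eq:parseval} becoming the classical one on $[0,L]$; the mean $\mathcal{M}\{\cdot\}$ reduces to $\frac{1}{L}\int_0^L$. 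So no step of the proof of Theorem~\ref{thm:cont_per_cond} is lost or requires modification. There is essentially no obstacle here --- the only thing to be careful about is aligning the normalisation conventions ($L^2$-norms versus mean values, the factor $\tfrac1L$) between the present formulation and those in \cite{BZ1, HHM}, and confirming that ``$M_y = [0,L]$'' in Theorem~\ref{thm:product} is understood with periodic identification so that it equals $S^1_L = \mathcal{C}_{\id}$. Once those identifications are made, the corollary follows immediately from Theorem~\ref{thm:cont_per_cond}.
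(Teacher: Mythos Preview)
Your proposal is correct and matches the paper's treatment: the corollary is stated without proof in the paper, being regarded as an immediate specialisation, and your unpacking of the $\varphi = \id$ case is exactly the intended verification. Your care in noting that ``$M_y = [0,L]$'' must be read with periodic identification (so that $M_y = S^1_L$ has empty boundary and Theorem~\ref{thm:product} applies with $\partial M = \partial M_x \times S^1_L$) is the one point worth making explicit, and you have done so.
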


    \section{Proof of the Main Theorem}\label{sec:mainthm}
    
    In this section, we give the proof of the Main Theorem \ref{thm:eigenf_conc}. The proof is based on a careful analysis of the closed orbits missing a neighbourhood of the singular set, semiclassical measures and the control estimate from the previous section.
    
    Throughout the rest of the paper, we will slightly abuse notation, and treat $U$ both as a neighbourhood of the singular set $\mathcal{S}$ in the double $D$ and in $P$. It will be clear from the proofs that the main eigenfunction concentration result will also be established for the double $D$, but it will readily imply Theorem \ref{thm:eigenf_conc}. In fact, an eigenfunction $u_n$ on $P$ induces a $C^\infty$ eigenfunction on $D_0$ (denoted by the same letter), that is equal to $u_n$ on $P$ and to $-u_n \circ \sigma$ in $\sigma P$ for the Dirichlet Laplacian and $u_n \circ \sigma$ for the Neumann Laplacian. A lower bound on the mass of $u_n$ near $\mathcal{S}$ in $D$ translates to a similar bound in $P$.
    
    Also, to clarify, one can define the Laplacian on $D_0 := D \setminus \mathcal{S}$, by taking the Friedrichs extension of the operator with domain $C_c^\infty(D_0)$. It is known that this is self-adjoint with compact resolvent, so has discrete spectrum $\lambda_j$ going to infinity, and a complete orthonormal $L^2$-basis $u_j$ of Laplace eigenfunctions.  
    
    To begin the proof of our Main Theorem \ref{thm:eigenf_conc}, let us assume to the contrary that there is no concentration in the neighbourhood $U$, that is, there exists a subsequence $u_n$ satisfying
    \begin{equation}\label{eq:assumption}
    \lim_{n \rightarrow \infty} \int_U |u_n|^2 = 0.
    \end{equation}

    Let $\mu$ be an arbitrary semiclassical measure associated to the sequence $u_n$ in the standard way. That is, for any $a \in C^\infty_0(S^*D_0)$ (unit cotangent bundle on $D_0 = D \setminus \mathcal{S}$) we have
    \begin{equation} \label{eq:semicl_defect_meas}
        \lim_{n \to \infty} \langle a(x, h_nD) u_n, u_n \rangle = \int_{S^*D_0} a d\mu.
    \end{equation}{}
    Here $h_n^2 = \frac{1}{\lambda_n}$ and $a(x, hD)$ is a choice of quantisation on $D_0$. We will look at the interaction of this semiclassical measure with the geodesic flow on $D$.  To start with, we have the following result. Let $U_0 := U \setminus \mathcal{S}$ and denote by $\pi: S^*D_0 \to D_0$ the footpoint projection.
    \begin{prop}\label{prop:dis_supp}
    	The support of $\mu$ is disjoint from $\pi^{-1}(U_0)$ and $\mu$ is a probability measure which is invariant under the geodesic flow.
    \end{prop}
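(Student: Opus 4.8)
The plan is to run the standard semiclassical defect measure argument attached to the eigenfunction equation $(-h_n^2\Delta_g-1)u_n=0$, with $h_n^2=\lambda_n^{-1}$, feeding in the hypothesis \eqref{eq:assumption} at two key points. Recall that $D_0$ carries a flat metric and is boundaryless, so the quantisation of symbols compactly supported in $T^*D_0$ is carried out locally in charts via a partition of unity, just as in $\RR^n$; the incompleteness of the geodesic flow (trajectories running into $\mathcal{S}$) is harmless because all test symbols below are compactly supported in $S^*D_0$. I first treat the disjointness statement. Let $a\in C_c^\infty(S^*D_0)$ with $\pi(\supp a)\subset U_0$, and pick $\psi\in C_c^\infty(U_0)$, $0\le\psi\le 1$, with $\psi\equiv 1$ on a neighbourhood of $\pi(\supp a)$. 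Since $1-\psi$ and $a$ have disjoint supports, pseudolocality gives $a(x,h_nD)=\psi(x)\,a(x,h_nD)+O_{L^2\to L^2}(h_n^\infty)$, hence, using the uniform $L^2$-boundedness of $a(x,h_nD)$ together with \eqref{eq:assumption},
\[
\big|\langle a(x,h_nD)u_n,u_n\rangle\big| = \big|\langle a(x,h_nD)u_n,\psi u_n\rangle\big| + o(1) \le C\lVert u_n\rVert_{L^2(U)}+o(1) \to 0.
\]
Therefore $\int a\,d\mu=0$ for every such $a$, i.e.\ $\supp\mu\cap\pi^{-1}(U_0)=\emptyset$. Since $\supp\mu\subset S^*D_0=\pi^{-1}(D\setminus\mathcal{S})$ and $\mathcal{S}\subset U$, this forces $\pi(\supp\mu)\subset D\setminus U$, a compact subset of $D_0$.

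Invariance under the geodesic flow is the classical commutator computation. For $a\in C_c^\infty(T^*D_0)$ set $A_n=a(x,h_nD)$, properly supported. Using that $u_n$ is an eigenfunction and that $-h_n^2\Delta_g-1$ is symmetric on functions compactly supported in $D_0$, one has $\langle[{-h_n^2\Delta_g-1},A_n]u_n,u_n\rangle=0$, and semiclassical calculus then gives
\[
0 = \langle[{-h_n^2\Delta_g-1},A_n]u_n,u_n\rangle = \frac{h_n}{i}\,\langle\operatorname{Op}(\{|\xi|_g^2,a\})u_n,u_n\rangle + O(h_n^2).
\]
Dividing by $h_n$ and letting $n\to\infty$ yields $\int_{S^*D_0}\{|\xi|_g^2,a\}\,d\mu=0$ for all $a$, and since the Hamiltonian flow of $|\xi|_g^2$ on the energy surface $\{|\xi|_g=1\}$ is a time reparametrisation of the geodesic flow, $\mu$ is invariant under the geodesic flow of $D_0$.

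Finally, $\mu$ is a probability measure. Positivity is the sharp G{\aa}rding inequality: $\langle a(x,h_nD)u_n,u_n\rangle\ge -Ch_n$ for $0\le a\in C_c^\infty$, so $\int a\,d\mu\ge 0$. For the total mass, \eqref{eq:assumption} gives $\lVert u_n\rVert_{L^2(D\setminus U)}^2 = 1-\lVert u_n\rVert_{L^2(U)}^2\to 1$; choosing $\chi\in C_c^\infty(D_0)$ with $0\le\chi\le 1$ and $\chi\equiv 1$ on $D\setminus U$ we get $\lVert\chi u_n\rVert_{L^2}^2\to 1$, while the equation $(-h_n^2\Delta_g-1)u_n=0$ forbids any microlocal mass of $u_n$ at $\xi=\infty$, so $\langle\chi^2 u_n,u_n\rangle\to\int_{S^*D_0}\chi^2\,d\mu$, whence $\int\chi^2\,d\mu=1$. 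But $\chi^2\equiv 1$ on $\pi^{-1}(D\setminus U)\supset\supp\mu$ by the first step, so $\mu(S^*D_0)=\int\chi^2\,d\mu=1$. The whole argument is a transcription of the standard quantum ergodicity setup to the double $D$; the only delicate points are (i) the absence of escape of mass to infinite frequency, which is routine for solutions of $(-h_n^2\Delta_g-1)u_n=0$, and (ii) --- the genuine crux --- that $\mu$ loses no mass into the singular set $\mathcal{S}$, which is precisely where \eqref{eq:assumption} is indispensable, as it confines $\mu$ to the compact set $\pi^{-1}(D\setminus U)$.
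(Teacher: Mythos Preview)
Your argument is correct and follows essentially the same approach as the paper: pseudolocality together with the hypothesis \eqref{eq:assumption} for the disjointness of $\supp\mu$ from $\pi^{-1}(U_0)$, the standard commutator/Egorov computation for flow invariance, and a cutoff argument exploiting \eqref{eq:assumption} again to rule out loss of mass into $\mathcal{S}$ for the probability measure claim. Your write-up is in fact more explicit than the paper's, which merely cites \cite{Zw} for invariance and sketches the total-mass step in one sentence; the key observation you single out---that \eqref{eq:assumption} confines $\supp\mu$ to the compact set $\pi^{-1}(D\setminus U)\subset S^*D_0$, preventing escape of mass to the singular set---is exactly the point the paper emphasises.
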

    \begin{proof}
    	Suppose to the contrary that there is a point $q \in \supp \mu$ with $\pi (q) \in U_0$. Hence by definition, there exists a neighbourhood $V$ with $q \in V \subseteq \pi^{-1}(U_0)$ so that $\mu(V) > 0$. Now let $\phi \in C_0^\infty(S^*D_0)$ be a function taking values in $[0, 1]$ with $\phi = 1$ on $V$ and $\supp \phi \subseteq \pi^{-1}(U_0)$. From the nonnegativity of $\mu$ and the assumption on $V$, it follows that $\langle \mu, \phi \rangle > 0$. Hence, using the definition of a semiclassical measure, we obtain the existence of a fixed positive constant $\gamma$ such that
    	\begin{equation} \label{eq:pos_semcl_meas}
    	    \lim_{n \to \infty} \langle \phi(x, h_nD) u_n, u_n \rangle_{L^2(D)} > \gamma > 0.
    	\end{equation}{}
    	
    	Now, we can decompose
    	\begin{align} \label{eq:decomp_semiclas_meas}
    	    \langle \phi(x, h_nD) u_n, u_n \rangle_{L^2(D)} = \langle \phi(x, h_nD) u_n, u_n \rangle_{L^2(\pi(V))} + \langle \phi(x, h_nD) u_n, u_n \rangle_{L^2(D \setminus \pi(V))}.
    	\end{align}
    	To estimate the right hand side further from above one can use a bound on the operator norm (see, e.g., \cite[Theorem 5.1]{Zw}):
    	\begin{equation}
    	    \| \phi(x, h_nD) \|_{L^2 \rightarrow L^2} \leq C \sup_{S^*D_0} |\phi| + O(h^{\frac{1}{2}}) = C + O(h^{\frac{1}{2}}).
    	\end{equation}{}
    	By the pseudolocality of PDOs (see, e.g., the Remark on p. 81, \cite{Zw}) and eventually slightly enlarging $V$, one can assume that the second term on the right in (\ref{eq:decomp_semiclas_meas}) can be made negligible, whereas the first term on the right in (\ref{eq:decomp_semiclas_meas}) is bounded above as:
        \begin{equation}
             \langle \phi(x, h_nD) u_n, u_n \rangle_{L^2(\pi(V))} \leq \left( C + O(h^{\frac{1}{2}}) \right) \| u_n \|^2_{L^2(\pi(V))} \rightarrow 0, \quad n \rightarrow \infty,
        \end{equation}{}
    	where we have used the assumption (\ref{eq:assumption}). This contradicts (\ref{eq:pos_semcl_meas}).

    	Further, the invariance of $\mu$ under the geodesic flow is a well-known fact for semiclassical defect measures. For further discussion on semiclassical correspondence and Egorov's theorem, we refer to \cite[Chapter 11]{Zw}.
    	
        To prove that $\mu$ is a probability measure, one follows a similar line of reasoning as above, i.e. by taking an appropriate cut-off functions supported in $D \setminus U_0$ and using (\ref{eq:semicl_defect_meas}) and the fact that $u_n$ are $L^2$-normalised. Technically, one needs to argue in a similar fashion as above (using pseudolocality) to estimate the different terms. We emphasise that it is crucial for the arguments that we work away from the singular points.
    \end{proof}

  
      
   
    


    Now we begin with the proof of Theorem \ref{thm:eigenf_conc}. Before tackling the general case, we first give an easier proof that works for rational polyhedra, and contains some preliminary ideas. This proof is essentially a higher dimensional analogue of \cite{HHM}.

    \begin{proof}[Proof of Theorem \ref{thm:eigenf_conc} for Rational Polyhedra]
    	Let $\mu$ be a limit semiclassical measure associated to the eigenfunction sequence $u_n$ as above, and let $(x_0, \zeta) \in {S^*D_0}$ be in the support of $\mu$. According to Proposition \ref{prop:dis_supp} and Corollary \ref{lem:Dichotomy}, the finite tube condition (see Theorem \ref{thm:fintubecond}) implies that $x_0$ belongs to a tube periodic in the direction of $\zeta$. Thus, the support of $\mu$ is included in the union of the lifted maximal  periodic tubes $T_i$, for $i = 1, 2, \dotso, N$. Furthermore, assume $U$ is the $\varepsilon$-neighbourhood $U_\varepsilon$ of $\mathcal{S}$ for an $\varepsilon > 0$.
    	
    	Choose one such tube $T := T_i$ and choose its length $L$ such that every geodesic in $T$ is periodic with period $L$. By definition, $T$ is the image under a local isometry $F: \Omega \times \mathbb{R} \to D_0$, where $\Omega$ is a convex polygon. Denote the coordinates on $\Omega$ by $(x, y)$ and on $\mathbb{R}$ by $z$. Using this local isometry $F$, we pull back the eigenfunction $u_n$ to $T$. We now apply the black box concentration techniques of \cite{BZ, BZ1, HHM} to this setting. We choose an ``inner shell'' of thickness $\varepsilon$ (that is, the complement of a tubular neighbourhood of $\pa \Omega$ of thickness $\varepsilon$), denoted by $\Omega_\varepsilon \subset \Omega$, so that $\Omega_\varepsilon \times \mathbb{R}$ does not intersect $F^{-1}(U_\varepsilon)$. Now consider a smooth cut-off function $\chi$ such that $\chi \equiv 1$ inside $\Omega_\varepsilon$ and $\chi = 0$ outside $\Omega_{\varepsilon/2}$. Then, $v_n := \chi u_n$ vanishes near the walls of the tube $T$. The $v_n$ are bounded in $L^2_{\loc}$, so there exists at least one associated semiclassical measure $\nu$ on $T$. Since $\mu$ is supported on a finite number of lifted tubes, there are only a finite number of directions in the support of $\nu$. So we can find a (constant-coefficient) semiclassical pseudodifferential operator $\Phi = a(hD)$ on $T$ that is microlocally $1$ in a neighbourhood of $\zeta = dz$ (the direction of $T$), but vanishes microlocally in a neighbourhood of every other direction in the support of $\nu$. Let $\Phi_n := a(h_n D)$.
    	
    	Now, consider the sequence of functions $w_n := \Phi_n v_n$ on $T$. The semiclassical measure $\nu'$ associated to this sequence is related to the one for the sequence $v_n$ by $\nu' = |\sigma (\Phi)|^2 \nu$, where $a = \sigma (\Phi)$ is the principal symbol of the operator $\Phi$. Thus, the support of $\nu'$ is restricted to directions parallel to $d z$ and to the geodesics parametrised by $x, y$ such that $\chi(x, y) = 1$.
    	
    	Then, since $\Phi$ commutes with constant coefficient differential operators, we have on $\Omega \times [0, L]$
    	\beq \label{eq:before_reduction}
    	\begin{split}
    	(-\Delta_\Omega - \pa_z^2 - \lambda_n) w_n &= - \Phi_n ((\Delta \chi) u_n) - 2\Phi_n (\pa_x \chi\pa_x u_n) -2\Phi_n (\pa_y \chi\pa_y u_n),\\
    	w_n|_{\Omega \times \{0\}} &= w_n|_{\Omega \times \{L\}.}
    	\end{split}
    	\eeq
    	
    	We are now in a position to apply Theorem \ref{thm:controlperiodic} (or Theorem \ref{thm:cont_per_cond} for $\varphi = \id$), for $u = w_n$, $s = \lambda_n$, $f = f_n: = - \Phi_n ((\Delta \chi) u_n) - 2\Phi_n (\pa_x \chi\pa_x u_n) -2\Phi_n (\pa_y \chi\pa_y u_n)$ and $\omega$ contained in the set $\{ \chi = 0\}$. By our choice of $\omega$, we have $\| w_n\|^2_{L^2(\omega \times [0, L])} \to 0$. Also, it is clear from the choice of our cut-off that since $\supp (\nabla \chi)$ is disjoint from $\supp \nu'$, $\| f_n\|_{H^{-1}_{x, y}L^2_z(\Omega \times [0, L])} \to 0$ as $n \to \infty$. This implies that $\nu'$, and hence $\nu$ does not have any mass in the direction of the tube $T$. 
    	
    	We may apply the above argument to each tube $T_i$ to obtain $\mu = 0$, which contradicts the latter conclusion of Proposition \ref{prop:dis_supp} and finally proves the claim.
    \end{proof}	 
    
    Now we give the proof in the general case, that is, the case of irrational polyhedra. Observe that in this setting there are several issues that render the above proof invalid. Firstly, the boundary condition, as used in Theorem \ref{thm:controlperiodic}, is not periodic, but almost periodic in the irrational case.   We recall and emphasise that in a periodic tube, trajectories need not be individually periodic. Moreover, we improve the proof by relying on the Property \eqref{eq:P2'}, that is significantly weaker than the finite tube condition.

   \begin{proof}[Proof of Theorem \ref{thm:eigenf_conc}]
        As before, let $\mu$ be the semiclassical measure associated to the eigenfunction sequence $u_n$, and let $(x_0, \zeta) \in S^*D_0$ be in the suppport of $\mu$. According to Proposition \ref{prop:dis_supp} and Corollary \ref{lem:Dichotomy}, this generates a trajectory contained in a maximal periodic tube $T$ with the direction of $\zeta$, of length $L$ and associated rotation $\mathcal{R}$. By definition, there is a local isometry $F: \Omega \times \mathbb{R} \to T$, where $\Omega$ is a convex set, invariant under $\mathcal{R}$. Using this local isometry, by a slight abuse of notation we identify $\Omega \times \mathbb{R}$ with $T$ and $F^*u_n$ with $u_n$; we also pull back the measure $\mu$ to $T$. Write $x$ for the coordinate on $\Omega$ and $t$ for the $\mathbb{R}$ coordinate. Note that $F \circ \varphi = F$, where $\varphi(x, t) = (\mathcal{R}x, t + L)$; thus $\mu$ is invariant under $\varphi$. Recall $\Omega_\varepsilon \subset \Omega$ denotes the inner shell of thickness $\varepsilon$ inside $\Omega$, i.e. the complement of the $\varepsilon$-neighbourhood of $\partial \Omega$.
        
        Let $\chi \in C_0^\infty(\Omega)$ be invariant by $\mathcal{R}$ and such that
        \begin{align}\label{eq:chi}
        \chi(x) = 
            \begin{cases}
                1,& x \in \Omega_{\varepsilon/2 + \eta}.\\
                0,& x \in \Omega\setminus \Omega_{\varepsilon/2}.
            \end{cases}
        \end{align}
        Denote $v_n := \chi u_n$ and by $\mathcal{C}_{\varphi}$ the mapping torus determined by $\varphi$. Since $T$ does finitely many reflections in its one period, we have $\lVert{v_n}\rVert_{L^2(\mathcal{C}_{\varphi})}$ bounded uniformly in $n$, and the associated semiclassical measure is $\nu = \chi^2 \mu$.
        
        Take now $\Phi$ a constant coefficient semiclassical PDO in $\mathbb{R}^n$, microlocally cutting off near $dt$, constructed as follows. Using the Property \eqref{eq:P2'}, we may take its symbol $a(\xi)$ supported in a small cone $\Gamma \subset \mathbb{R}^n\setminus 0$ around $dt$, such that all lines in the direction of $\Gamma$ with basepoint $x \in \Omega_{\varepsilon/2 - \eta} \setminus \Omega_{\varepsilon/2 + \eta}$ hit the set $F^{-1}(U_\varepsilon)$ in finite time (see Figure \ref{fig:tubemaintheorem}). Moreover, we may take $a(\xi)$ equal to $1$ near $dt$ and invariant under rotations around $dt$, i.e. so that we have $a\circ\mathcal{R} = a$.
         
         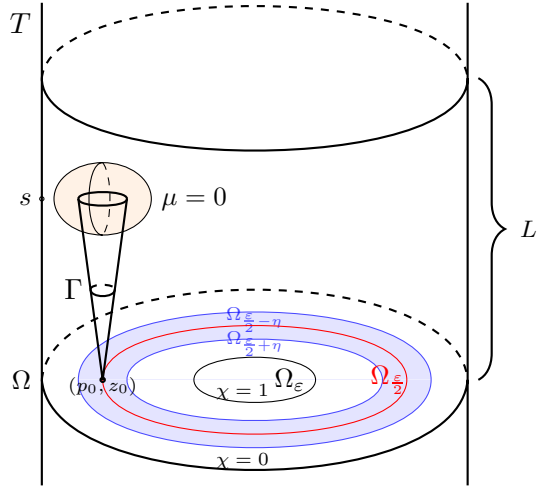
\begin{figure}
             \centering
             	\begin{tikzpicture}[scale = 0.8]
		\draw[thick]
				(0,-1.75)--(0,6.25)
				node[below left]{$T$}
				(7,-1.75)--(7,6.25)
				(0,0) node[left]{\small $\Omega$} to[out=-85,in=-95,distance=2cm] (7,0)
				(0,5) to[out=-90,in=-90,distance=1.6cm] (7,5)
				;
		\draw[thick,dashed]
				(0,0) to[out=85,in=95,distance=2cm] (7,0)
				(0,5) to[out=85,in=95,distance=1.6cm] (7,5)
				;
					
		\fill[blue!15, even odd rule, opacity = 0.7, draw=blue]
					(1-0.4,0) to[out=-90,in=-90,distance=1.5cm] (6.4,0) (3.5, 1) node[blue]{\tiny$\Omega_{\frac{\varepsilon}{2}-\eta}$}
					(1-0.4,0) to[out=90,in=90,distance=1.5cm] (6.4,0)
					
					(1+0.4,0) to[out=-90,in=-90,distance=.9cm] (6-.4,0) (3.5, 0.6)node[blue]{\tiny$\Omega_{\frac{\varepsilon}{2}+\eta}$}
					(1+0.4,0) to[out=90,in=90,distance=.9cm] (6-.4,0)
					;

		\draw[red]
					(1,0) to[out=-90,in=-90,distance=1.2cm] (6,0) (5.7, 0) node[]{\small$\Omega_{\frac{\varepsilon}{2}}$}
					(1,0) to[out=90,in=90,distance=1.2cm] (6,0);
					
		\draw
					(2.5,0) to[out=-90,in=-90,distance=.5cm] (4.5,0) node[left]{\small$\Omega_{\varepsilon}$}
					(2.5,0) to[out=90,in=90,distance=.5cm] (4.5,0)
					(3.3, -0.2) node[]{\tiny $\chi = 1$}
					(3.3, -1.35) node[]{\tiny $\chi = 0$}		
					;	
			
			\draw [thick, decorate,decoration={brace,amplitude=10pt,mirror,raise=4pt}, yshift=0pt]
(7,0) -- (7,5) node [black,midway,xshift=0.8cm] {\footnotesize
$L$};

		\fill[orange!10!white, draw=black]
				(1,3) ellipse (0.8cm and 0.6cm)
				(1,3.6) to[out=-180,in=180,distance=0.3cm] (1,2.4)
				(1.8, 3) node[right, black]{\small{$\mu = 0$}}
				;
		\draw (0, 3) circle (1pt) node[left]{\small$s$};
	
		\draw[thick] (1-.4,3) -- (1,0) (1.4,3)--(1,0) circle (1pt) (1, -.1)node[]{ \tiny\textbf{$(p_0,z_0)$}};
		\draw[thick] 
		(1-.4,3) to[out=-90, in=-90, distance= 0.15cm] (1+.4, 3)
		(1-.4,3) to[out=90, in=90, distance= 0.15cm] (1+.4, 3)
		(1-.2,1.5) to[out=-90, in=-90, distance= 0.15cm] (1+.2, 1.5)
		(1-.15,1.5) node[left]{$\Gamma$}
		;
		\draw[thick, dashed]
		(1-.2,1.5) to[out=90, in=90, distance= 0.1cm] (1+.2, 1.5)
		;
		\draw[black,dashed]
				(1,3.6) to[out=-50,in=50,distance=0.25cm] (1,2.4);
				;
	        \end{tikzpicture}
             \caption{The periodic tube $T$ of length $L$ with disc cross-section $\Omega$, point $(p_0, z_0) \in \partial \Omega_{\frac{\varepsilon}{2}}$, corresponding regions $\Omega_{\frac{\varepsilon}{2} \pm \eta}$, singular point $s \in \mathcal{S}$ and cone $\Gamma$. In orange is the set where $\mu = 0$.}
             \label{fig:tubemaintheorem}
         \end{figure}
        
        Let $w_n := \Phi_n(v_n)$, where $\Phi_n = a(h_nD)$ and observe that by Proposition \ref{prop:PHI}, $w_n$ descends to $\mathcal{C}_\varphi$. The semiclassical measure $\nu'$ associated to $w_n$ satisfies $\nu' = |a|^2 \chi^2 \mu$. By our choices of $\Phi$, $\chi$ and the invariance of $\mu$ under the geodesic flow, we have $\nu' = 0$ on $\Big(\Omega \setminus \Omega_{\varepsilon/2 + \eta}\Big) \times \mathbb{R}$. By a computation and since $\Phi$ commutes with constant coefficient differential operators, we have on $\Omega \times [0, L]$ the following equation with an almost periodic boundary condition
        
        \beq \label{eq:before_reduction_2}
        \begin{split}
    	    (-\Delta_\Omega - \pa_t^2 - \lambda_n) w_n &= - \Phi_n ((\Delta \chi) u_n) - 2\Phi_n (\nabla_x \chi \cdot \nabla_x u_n),\\
    	    w_n|_{\Omega \times \{0\}} &= (\mathcal{R}^*w_n)|_{\Omega \times \{L\}}.
        \end{split}
    	\eeq
    	\vspace{7pt}
    	
    	Choose $\omega \subset \Omega$ to be a small enough neighbourhood of $\partial \Omega$, for example $\omega = \Omega \setminus \Omega_{\varepsilon/2 - \eta}$ would work. We are now in a position to apply Theorem \ref{thm:cont_per_cond} to $u = w_n$, $s = \lambda_n$, $f = f_n := - \Phi_n ((\Delta \chi) u_n) - 2\Phi_n (\nabla_x \chi\cdot \nabla_x u_n)$ and $\omega$ as above to get\footnote{Note that a convex domain always has Lipschitz boundary, so we may apply the analytical results of previous section.}
    	\begin{align}\label{eq:controlineq}
    	    \|w_n\|_{L^2(\mathcal{C}_{\varphi})} \lesssim \|w_n\|_{L^2(\omega_{\varphi})} + \| \Phi_n ((\Delta \chi) u_n) + 2\Phi_n(\nabla_x \chi \cdot \nabla_x u_n)\|_{H^{-1}_x L^2_t(\mathcal{C}_{\varphi})}.
    	\end{align}
    	
    	By our choice of $\omega$, i.e. since $\chi = 0$ on a neighbourhood of $\overline{\omega}$ and so $\nu' = 0$ near $\overline{\omega}$, we have $\| w_n\|_{L^2(\omega_\alpha)} \to 0$. The $H^{-1}_{x}$ norm of the term for a fixed $t$ in the second bracket on the right hand side of \eqref{eq:controlineq} is bounded by a constant times the following expression, since $\Phi_n$ commutes with constant coefficient differential operators by Proposition \ref{prop:PHI}
    	\begin{align*}
    	    \|\Phi_n((\Delta \chi) u_n)\|_{H^{-1}(\Omega)} + \|\nabla_{x} \cdot (\Phi_n (u_n \nabla_{x} \chi) )\|_{H^{-1}(\Omega)}.
    	\end{align*}
    	This is further bounded by, for some $C > 0$
    	\begin{align}\label{eq:rhsphi}
    	    C\big(\|\Phi_n((\Delta \chi) u_n)\|_{L^2(\Omega)} + \|\Phi_n (u_n \nabla_{x} \chi) \|_{L^2(\Omega)}\big).
    	\end{align}
    	The $L^2_t$ norm of the expression in \eqref{eq:rhsphi} over $[0, L]$ goes to zero as $n \to \infty$, since the semiclassical measure $\nu'$ vanishes on $\Big(\Omega_{\varepsilon/2} \setminus \Omega_{\varepsilon/2 + \eta}\Big) \times \mathbb{R}$ and $\supp \nabla \chi \subset \Omega_{\varepsilon/2} \setminus \Omega_{\varepsilon/2 + \eta}$. This shows that the right hand side of \eqref{eq:controlineq} goes to zero as $n \to \infty$, which implies $\nu' \equiv 0$, and further implies $\mu \equiv 0$ since the choice of $(x_0, \zeta)$ was arbitrary. But this is a contradiction with Proposition \ref{prop:dis_supp}, which gives that $\mu$ is a probability measure.
    \end{proof}
   
   \subsection{Another application} In proving Theorem \ref{thm:eigenf_conc}, our main technical result was a version of a control theory estimate in \cite{BZ1}, which also works for periodic boundary conditions, as observed in \cite{HHM}. We have proved an analogous version in arbitrary dimensions, but more importantly, for almost periodic boundary conditions (see Theorem \ref{thm:cont_per_cond}). Among other consequences of such a control result is a stronger and more general version of \cite[Theorem 2]{M}, which has been stated in the particular context of partially rectangular billiards.
   \begin{theorem}\label{thm:conc_part_rect} 
        For any  periodic tube $T$ immersed in a convex $\Omega \subset \mathbb{R}^n$, and a neighbourhood $U$ of $\pa T$ inside $\Omega$, there exists a constant $C > 0$ such that 
   \beq\label{eq:conc_part_rect}
   -\Delta u = \lambda u  \Longrightarrow \int_U |u(x)|^2 dx \geq C \int_T |u(x)|^2dx,
   \eeq
   that is, no eigenfunction can concentrate in $T$ and away from $\partial T$.  
   \end{theorem}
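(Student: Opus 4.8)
The plan is to mimic the argument for the Main Theorem, but localised to a single immersed periodic tube $T$, and reduce the statement to the control estimate of Theorem \ref{thm:cont_per_cond}. Suppose, for contradiction, that \eqref{eq:conc_part_rect} fails: then there is a sequence of eigenfunctions $u_n$ with $-\Delta u_n = \lambda_n u_n$, normalised so that $\int_T |u_n|^2 = 1$, and with $\int_U |u_n|^2 \to 0$. First I would pull back everything to the model $\Omega' \times \mathbb{R}$ via the local isometry $F : \Omega' \times \mathbb{R} \to T \subset D_0$ defining the periodic tube, where $\Omega' \subset \mathbb{R}^{n-1}$ is the convex cross-section, $L$ the length, and $\mathcal{R}$ the associated rotation, so that $F \circ \varphi = F$ with $\varphi(x,t) = (\mathcal{R}x, t + L)$. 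Write $\tilde{u}_n := F^*u_n$ on $\mathcal{C}_\varphi$; since $T$ does only finitely many reflections per period and $\int_T |u_n|^2 = 1$, the $\tilde u_n$ are bounded in $L^2(\mathcal{C}_\varphi)$, hence have (after passing to a subsequence) an associated semiclassical measure $\mu$ on $S^*\mathcal{C}_\varphi$, which is invariant under the geodesic flow and charges no mass over $F^{-1}(U)$ (by the usual pseudolocality argument, exactly as in Proposition \ref{prop:dis_supp}).

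The next step is the localisation argument. Fix a thin neighbourhood: let $\eta > 0$ be small and let $\chi \in C_0^\infty(\Omega')$ be $\mathcal{R}$-invariant, equal to $1$ on the inner shell $\Omega'_{\varepsilon/2 + \eta}$ and supported in $\Omega'_{\varepsilon/2}$, where $\varepsilon$ is chosen small enough that $U$ contains the $\varepsilon$-neighbourhood of $\partial T$ (so $F^{-1}(U)$ contains the $\varepsilon$-neighbourhood of $\partial\Omega' \times \mathbb{R}$). Put $v_n := \chi \tilde u_n$. The point of \emph{partial rectangularity}, and the reason we need a microlocal cut-off, is that $\mu$ need not a priori be supported only in the $dt$ direction; but here we do \emph{not} need the full machinery of Property \eqref{eq:P2'} — since $U$ is a neighbourhood of $\partial T$ only (not of any singular set), a geodesic in $T$ carrying mass of $\mu$ near $\partial\Omega'$ would be forced into $F^{-1}(U)$ unless it is (almost) parallel to the tube direction. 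More precisely, I would argue: the geodesic flow on the flat cylinder $\mathcal{C}_\varphi$ is such that any flow line starting at a point with footpoint in the shell $\Omega'_{\varepsilon/2 - \eta}\setminus \Omega'_{\varepsilon/2 + \eta}$ and direction making a positive angle with $dt$ will reach the $\varepsilon$-neighbourhood of $\partial\Omega'$ in bounded time; hence by flow-invariance of $\mu$ and $\mu(\pi^{-1}(F^{-1}(U))) = 0$, the measure $\mu$ vanishes on that shell in all directions transverse to $dt$. Then I choose $\Phi = a(hD)$ a constant-coefficient semiclassical PDO with symbol $a(\xi)$ supported in a small cone $\Gamma$ around $dt$, $\mathcal{R}$-invariant ($a \circ \mathcal{R} = a$) and $\equiv 1$ near $dt$, chosen narrow enough that all lines in directions of $\Gamma$ with footpoint in $\Omega'_{\varepsilon/2 - \eta}\setminus \Omega'_{\varepsilon/2 + \eta}$ hit $F^{-1}(U)$; and set $w_n := \Phi_n v_n$. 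By Proposition \ref{prop:PHI} the $w_n$ descend to $\mathcal{C}_\varphi$, their semiclassical measure is $\nu' = |a|^2\chi^2\mu$, and $\nu'$ vanishes on $(\Omega' \setminus \Omega'_{\varepsilon/2 + \eta})\times\mathbb{R}$.

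Now I run exactly the computation from the proof of Theorem \ref{thm:eigenf_conc}: $w_n$ satisfies
\begin{align*}
    (-\Delta_{\Omega'} - \partial_t^2 - \lambda_n) w_n &= -\Phi_n((\Delta\chi)\tilde u_n) - 2\Phi_n(\nabla_x\chi\cdot\nabla_x\tilde u_n),\\
    w_n|_{\Omega'\times\{0\}} &= (\mathcal{R}^* w_n)|_{\Omega'\times\{L\}},
\end{align*}
an equation with an almost periodic boundary condition. Apply Theorem \ref{thm:cont_per_cond} with $\omega := \Omega'\setminus\Omega'_{\varepsilon/2-\eta}$ (a neighbourhood of $\partial\Omega'$, automatically satisfying (GCC) since $\Omega'$ is a bounded domain in Euclidean space and the billiard flow is stopped at $\partial$, which here plays the role of the control region — here I would check that in the setting of Theorem \ref{thm:cont_per_cond} the boundary neighbourhood condition is met, noting $\Omega'$ convex hence Lipschitz). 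This gives
\[
\|w_n\|_{L^2(\mathcal{C}_\varphi)} \lesssim \|w_n\|_{L^2(\omega_\varphi)} + \|\Phi_n((\Delta\chi)\tilde u_n) + 2\Phi_n(\nabla_x\chi\cdot\nabla_x\tilde u_n)\|_{H^{-1}_x L^2_t(\mathcal{C}_\varphi)}.
\]
The first term on the right tends to $0$ because $\nu' = 0$ near $\overline\omega$; the second term tends to $0$ because, after moving the derivative inside via the commutation with constant-coefficient operators and bounding the $H^{-1}$ norm by an $L^2$ norm, the relevant $L^2$ mass lives in $\supp\nabla\chi \subset \Omega'_{\varepsilon/2}\setminus\Omega'_{\varepsilon/2+\eta}$ where $\nu'$ vanishes. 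Hence $\|w_n\|_{L^2(\mathcal{C}_\varphi)}\to 0$, so $\nu' \equiv 0$, i.e. $\chi^2|a|^2\mu \equiv 0$; since $a \equiv 1$ near $dt$ and $\chi\equiv 1$ on the inner shell, and by flow-invariance $\mu$ is determined by its restriction there, we conclude $\mu \equiv 0$. This contradicts $\int_T|u_n|^2 = 1$: the usual no-escape-of-mass argument (Proposition \ref{prop:dis_supp}, using that $T$ is away from any singularities of $D_0$) shows $\mu$ is a probability measure on $S^*\mathcal{C}_\varphi$. The main obstacle I anticipate is the careful justification of the no-escape-of-mass statement on $\mathcal{C}_\varphi$ and that $\mu$ is genuinely a probability measure — one has to ensure $T$ is immersed into $D_0$ in such a way that the $u_n$ remain uniformly bounded in $L^2$ of the cylinder and no mass leaks at the (finitely many) reflections within a period; and secondly the geometric claim that mass near $\partial\Omega'$ in non-$dt$ directions is forced into $F^{-1}(U)$, which requires that $U$ genuinely contains a full (uniform) neighbourhood of $\partial T$ so that the flow-out argument closes up quantitatively.
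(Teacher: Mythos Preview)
Your proposal is correct and follows essentially the approach the paper indicates: adapt the argument of \cite{M} (equivalently, the proof of Theorem \ref{thm:eigenf_conc} localised to a single tube) using the almost periodic control estimate Theorem \ref{thm:cont_per_cond} in place of the periodic one. The paper omits all details, so your write-up is in fact more complete than what the paper supplies.

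One remark worth making explicit. In the proof of Theorem \ref{thm:eigenf_conc}, the fact that $\mu$ is supported only in the $dt$ direction comes for free from the global dynamical dichotomy (Corollary \ref{lem:Dichotomy}), which is unavailable here since $\Omega$ is an arbitrary convex domain. You correctly identify the substitute: on the flat cylinder $\mathcal{C}_\varphi$, any geodesic with footpoint in $\Omega'_\varepsilon$ and direction outside a small cone $\Gamma$ around $dt$ reaches the $\varepsilon$-neighbourhood of $\partial\Omega'$ (hence $F^{-1}(U)$) in finite time, so flow-invariance and $\mu|_{\pi^{-1}(F^{-1}(U))}=0$ kill all transverse directions. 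Your phrase ``by flow-invariance $\mu$ is determined by its restriction there'' is the right idea but slightly compressed; spelling it out as above makes the conclusion $\mu\equiv 0$ airtight. The concerns you flag at the end (no escape of mass on $\mathcal{C}_\varphi$, uniform $L^2$ bound over finitely many reflections in a period) are the right ones to check and are handled exactly as in Proposition \ref{prop:dis_supp} and the main proof.
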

   
    Here by immersing the  periodic tube $T$ into $\Omega$, we assume that the parts of the boundary $\partial \Omega$ where $T$ hits reflects are flat, i.e. consist of a hypersurface. The proof of Theorem \ref{thm:conc_part_rect} is absolutely similar in spirit to the proof in \cite{M}, except that in higher dimensions, we need to use our version of the control result, given by Theorem \ref{thm:cont_per_cond}.  We also note that the $L^2(T)$ norm is equivalent to the norm of the pullback over the mapping cylinder corresponding to $T$. We will skip the details.
   
   \section{Acknowledgements} The work was started when the third author was at the Technion, Israel supported by the Israeli Higher Council and the research of the third author leading to these results is part of a project that has received funding from the European Research Council (ERC) under the European Union's Horizon 2020 research and innovation programme (grant agreement No 637851). The work was continued when the third author was at the Courant Institute of Mathematical Sciences funded by Fanghua Lin and the Max Planck Institute for Mathematics, Bonn, and finished at IIT Bombay. He wishes to deeply thank them all. The first two authors gratefully acknowledge the Max Planck Institute for Mathematics, Bonn for providing ideal working conditions and for financial support. During the last stages of this project the first author was in Orsay, where he received funding from the European Research Council (ERC) under the European Union’s Horizon 2020 research and innovation programme (grant agreement No. 725967). The second author would like to thank the Fraunhofer Institute, IAIS, for the perfect working conditions and environment. The authors also thank Steve Zelditch and Luc Hillairet for very helpful conversations. Lastly, the authors are deeply grateful to both the Referees for significant improvements in the presentation. In particular, the idea of Proposition \ref{prop:iso_adm}, was suggested by one of the Referees, whose comments also led us to add Remark \ref{rem:spectrum_C_phi} and a new proof of Theorem \ref{th:non-periodic} in Appendix A.

    \appendix
    
\section{Some dynamical statements}\label{app:A}
    In this appendix for reader's convenience we prove the statements from Section \ref{subsec:dynamicsrevision} and give original references. We emphasise that our proof of Theorem \ref{th:non-periodic} contains our own original ideas.

    We start by recording the fact that any trajectory {\em in a polygon} whose forward closure does not contain any singular point must be periodic. In other words, no non-periodic trajectory can continue indefinitely inside the polygon without coming arbitrarily close to the pockets. The proof appears in \cite[Theorem 2 and Corollary 2]{GKT} for polygons; the statement follows from the more general Theorem \ref{th:non-periodic}.
	
	\begin{theorem}\label{thm:period_traj'}
	For an arbitrary polygon $P$, and for any $x \in T\Gamma_1$, if the closure of the set $ \pi \{f^i x : i \geq 0 \}$ does not contain any vertex of the polygon, then the orbit of $x$ is periodic, where $\pi : TP \to P$ is the usual projection map. 
	\end{theorem}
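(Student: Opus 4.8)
The plan is to derive the statement from the structural results of Section~\ref{subsec:dynamicsrevision} together with one elementary geometric observation: a polygonal billiard trajectory bounded away from the vertices makes collisions at a bounded rate. I would organise the argument in three steps.

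First, I would record that, since $P$ has only finitely many vertices and the closure $K := \overline{\pi\{f^i x : i \geq 0\}}$ is compact and disjoint from the vertex set, there is some $\delta > 0$ with $\dist(K, \{\text{vertices}\}) \geq \delta$. Each free segment of the trajectory of $x$ then joins two points lying in the $\delta$-interiors of sides of $P$, and by distinguishing whether these two sides share a vertex or not one obtains a uniform lower bound $\ell_0 = \ell_0(P,\delta) > 0$ for the length of every such segment (the upper bound $\diam P$ being trivial). In particular the forward trajectory meets $\partial P$ infinitely often and the forward symbolic word $w(x) \in \Sigma_P^+$ is well defined.

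Second --- and this is the one genuinely substantial point --- I would show that $w := w(x)$ is a periodic word. If it were not, then Theorem~\ref{th:non-periodic} would force the closure of the trajectory of $x$ to meet the singular set $\mathcal{S}$, which for a polygon is precisely its vertex set, contradicting the choice of $K$. (Theorem~\ref{th:non-periodic} as stated concerns convex polyhedra; for a general, possibly non-convex, polygon one invokes instead \cite[Theorem~2 and Corollary~2]{GKT}, whose proof is of the same unfolding-and-recurrence type.) For rational polygons this step is classical and transparent: the associated flat surface decomposes into a finite union of cylinders in each of its completely periodic directions, and an orbit whose closure avoids the cone points must lie inside one such cylinder; for irrational polygons it is exactly the recurrence analysis carried out in the proof of Theorem~\ref{th:non-periodic} in Appendix~\ref{app:A}.

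Finally, with $w$ periodic of minimal period $k$, I would apply Theorem~\ref{th:general-dynamics} in dimension $n = 2$. Unfolding the trajectory over $2k$ consecutive reflections produces an orientation-preserving isometry $g$ of $\mathbb{R}^2$; since $w$ is $2k$-periodic, $f^{2k}$ preserves the oriented direction of $x$ by Lemma~\ref{lem:symbol_same_direction_same}, so $g$ fixes a nonzero vector and is therefore a translation. Because the whole corridor of reflected copies of $P$ is skewered by a single straight line, the component of this translation transverse to that line must vanish; hence the unfolded ray closes up under $g$ and the trajectory of $x$ is periodic (with period $2k$, or $k$ when $k$ is even). This is case~(2).(a) of Theorem~\ref{th:general-dynamics}; case~(2).(b) cannot occur here since a nontrivial planar rotation fixes no line, and for odd $k$ we recover part~(3) of that theorem. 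Everything outside the second step is bookkeeping.
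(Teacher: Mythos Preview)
Your proposal is correct and follows the same route the paper indicates: the paper does not give an independent proof of Theorem~\ref{thm:period_traj'} but simply notes that it is the $n=2$ specialisation of Theorem~\ref{th:non-periodic} (together with \cite[Theorem~2 and Corollary~2]{GKT} in the non-convex case), and your Step~2 is exactly this reduction; your Step~3 is nothing other than the $n=2$ instance of Theorem~\ref{th:general-dynamics}, where $\mathcal{R}_0\in SO(1)$ forces every orbit with periodic word to be genuinely periodic. Two minor remarks: Step~1 is superfluous, since $x\in T\Gamma_1$ already guarantees that the forward orbit avoids $\mathcal{S}$ and hence that $w(x)$ is well defined; and in Step~3 the cleaner justification for the translation being along the trajectory direction is that the unfolded trajectory is a single line which $g$ must carry to itself (as $g$ maps the corridor $P^0,\dots,P^{2k-1}$ onto $P^{2k},\dots,P^{4k-1}$, both threaded by that same line), rather than an appeal to a vanishing transverse component.
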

	
	We proceed with the proof of Theorem \ref{th:general-dynamics}. Here we adapt the proof of \cite[Theorem 5]{GKT}, written down in \cite{GKT} for $n = 3$, to higher dimensions. One distinction to the $n = 3$ and $n \geq 4$ cases in the theorem is that we cannot simply classify all the possible cross-sections of   periodic tubes (cf. Remark \ref{rem:general-dynamics}).
    \begin{proof}[Proof of Theorem \ref{th:general-dynamics}]
        Let us enumerate the faces of $P$ by $\mathcal{F}_1, \dotso, \mathcal{F}_l$. Given an unfolding $P = P^0, P^1, P^2, \dotso$, we write $R_i^{(j)}$ for the (affine) reflection in the $i$-th face in $P^j$ and denote the corresponding (linear) reflection through the origin by $R^{(j)}_{i, 0}$. By Proposition \ref{prop:maximaltube}, we have that $X(w)$ is convex and consists of parallel vectors $v \in \mathbb{R}^n$ with basepoints on one particular face; by Lemma \ref{lem:symbol_same_direction_same}, we have $f^k: X(w) \to X(w)$. Here $f$ is the first return map to a face. We assume first $k$ is even. Let $w = (i_0, \dotso, i_{k-1}, i_0, \dotso)$ where $(i_0, \dotso, i_{k-1})$ repeats itself and define the rotation
        \[\mathcal{R}_0 = R_{i_{k-1}, 0}^{(k-1)} \cdots R_{i_0, 0}^{(0)}.\]
        Since $f^k : X(w) \to X(w)$, we must have $\mathcal{R}_0v = v$. We will also write
        \[\mathcal{R} = R_{i_{k-1}}^{(k-1)} \cdots R_{i_0}^{(0)} = \tau + \mathcal{R}_0.\]
        Here $\tau \in \mathbb{R}^n$ is a translation vector. 
        
        We may extend $f^k$ by continuity to $f^k: \overline{X(w)} \to \overline{X(w)}$. By Brouwer's fixed point theorem, we must have at least one fixed point $x(w)$, i.e. $f^k(x(w)) = x(w)$. We may write $x(w) = (x_0, v)$ where $x_0 \in \mathcal{F}_{i_0}$. In fact, by unfolding the trajectories we see, for some $L > 0$
        \[\mathcal{R}x_0 = x_0 + Lv = \tau + \mathcal{R}_0 x_0.\]
        We may choose $x_0$ as the origin, to obtain $\tau = Lv$. The argument above also shows $f^k(X(w)) = X(w)$, since the image of $f^k$ is obtained by a rigid motion.
        
        Let the projection to the plane through origin normal to $v$ be denoted by $\pi$. Let $\mathcal{O} \subset \mathcal{F}_{i_0}$ denote the basepoints of $X(w)$. The cross-section $\Omega$ is obtained by projecting $\mathcal{O}$ to the direction normal to $v$. Since in an unfolding, the parallel trajectories hit $\mathcal{R}_0(\mathcal{O})$, we must have
        \[\Omega = \pi(\mathcal{O}) = \pi (\mathcal{R}_0 \mathcal{O}) = \mathcal{R}_0 \Omega.\]
        The last equality holds since $\pi$ and $\mathcal{R}_0$ commute. This shows $\Omega$ is invariant under the rotation $\mathcal{R}_0$.\footnote{By a slight abuse of notation, here we identify $\mathcal{R}_0$ with the rotation in $\pi(\mathbb{R}^n)$. This is possible as $\mathcal{R}_0$ fixes $v$.} If we parametrise the tube $T$ in an unfolding by $\Omega \times \mathbb{R}$, we also notice that a point $(x, t) \in \Omega \times \mathbb{R}$ corresponds to the same point in $P$ as does the point $(\mathcal{R}_0x, t + L)$. If we think of doubles, this means that $T$ is given by the local isometry $F: \Omega \times \mathbb{R} \to D_0$ such that $F(\mathcal{R}_0x, t + L) = F(x, t)$ for any $(x, t)$.
        
        In the case when $k$ is odd, we may repeat the previous argument for $\mathcal{R}_0 \in O(n-1)$ that is orientation reversing. If $n = 2, 3$, this implies $\mathcal{R}_0$ is a reflection in a plane containing $v$ and note that upon applying $\mathcal{R}_0^2 = \id$, we get back to the original position, so every individual trajectory in the tube is periodic.
        
        In the case where $P$ is rational, we notice that the rotation $\mathcal{R}_0$ in the above argument is of finite order. Thus we may assume that each trajectory is periodic at the cost of multiplying $k$ by an integer $q$. We have $\Omega$ polyhedral, since it is an intersection of a finite number of faces projected to the plane perpendicular to $v$.
    \end{proof}
    
    \begin{rem}\rm
        Observe that the above proof will not work when $P$ is not convex, as one can easily come up with examples (for example, consider a cube with a ``needle'' poking inward from the ceiling) of non-convex polyhedra where the set $X(w)$ is not simply-connected, which disallows the application of the Brouwer fixed point theorem.
    \end{rem}

    Now we give a proof of Theorem \ref{th:non-periodic}, proved in \cite{GKT}. Similarly to Theorem \ref{th:general-dynamics} above, the proof was written down for $n = 3$ in \cite{GKT} and we give a slightly different argument that avoids the use of ``the $N_\delta$ sets'' and is fully self-contained. The main novelty is in Lemma \ref{lemma:stronglipschitz}, that asserts that under a bi-Lipschitz type condition, a map on a compact set is uniformly recurrent, or in other words $C^0$-admissible in the sense of Proposition \ref{prop:iso_adm}. 
    
    \begin{proof}[Proof of Theorem \ref{th:non-periodic}]
        Assume the contrary, i.e. assume the trajectory $\gamma$ generated by $x$ is a positive distance $\varepsilon$ away from $\mathcal{S}$. Consider the maximal tube $T = T(x)$ corresponding to $X(w)$.
        
        Consider now an arbitrary limit point $f^{m_i} x \to x^* \in T\Gamma_1$ with $m_i \to \infty$. Since the trajectory of $x$ is at $\varepsilon$ distance to $\mathcal{S}$, so is the trajectory of $x^*$, and so we may consider the maximal tube $T(x^*)$ associated to $x^*$. Since $w$ is not periodic, eventually $f^{m_i}x$ is not parallel to $x^*$, as otherwise $f^{m_i}x$ would eventually generate the same tube as $T(x^*) = T(x)$ and thus $w$ is periodic, contradiction. We will show this to be absurd, thus the closure of the trajectory of $x$ must hit $\mathcal{S}$. Write $w^*$ for the word associated to $x^*$.
        
        Fix a $\delta > 0$ and define the (closed) set $X_\delta(w^*)$ to consist of points at a distance at least $\delta$ from $\partial X(w^*)$ inside $X(\omega^*)$. We introduce the forward orbit of $X_\delta(w^*)$ as follows
        \[Z_\delta := \cup_{i = 0}^\infty f^i(X_\delta(w^*)).\]
        We denote the closure of this forward orbit by $Y_\delta := \overline{Z_\delta}$.
        We claim that $Y_\delta \subset T\Gamma_1$ is invariant by $f$.
        
        To prove the claim, consider a sequence $f^{n_i}x_i \to y \in Y_\delta$. Note that $y$ is not tangent to a face, as otherwise this would contradict the assumption that trajectories starting in $\intt X(w^*)$ are a positive distance away from $\mathcal{S}$. Since $f^{n_i}x_i$ are at a distance at least $\delta$ from $\mathcal{S}$, we have $f(y)$ satisfies the same property and the continuity of $f$ on $T\Gamma_1$ proves the claim as $f^{n_{i+1}}x_i \to f(y)$.
        
        Next, we claim that there are $c, C > 0$, such that for any $m \geq 0$ we have for all $y_1, y_2 \in Y_\delta$
            \begin{equation}\label{eq:stronglipschitz}
                cd(y_1, y_2) \leq d(f^my_1, f^my_2) \leq C d(y_1, y_2).
            \end{equation}
        Here $d(\cdot, \cdot)$ is the natural distance on $T\Gamma_1$. The existence of such constants (independent of $m$), for $y_1, y_2 \in Z_\delta$ follows since the angle between the unfolded tube $T^\infty$ and the faces it hits is bounded from below, as otherwise a trajectory starting in $\intt X(w^*)$ would come arbitrarily close to $\mathcal{S}$, contradicting the definition of a tube. The full claim follows by density and $f$ satisfies this ``strong Lipschitz property''.
        
        Now we prove a statement similar to Proposition \ref{prop:iso_adm}.\footnote{Compared to \cite[Theorem 1.16]{F} and to the proof found in \cite{GKT}, the following lemma proves that \emph{all} points in $Y_\delta$ are uniformly recurrent at the same time. The key to this is that $f$ satisfies the strong property \eqref{eq:stronglipschitz} and the idea from the proof of Proposition \ref{prop:iso_adm}.}
        
        \begin{lemma}\label{lemma:stronglipschitz}
            For each $\eta > 0$, the set of $k \in \mathbb{N}_0$ such that $\dist_{C^0}(f^k|_{Y_\delta}, \id) < \eta$ is relatively dense.
        \end{lemma}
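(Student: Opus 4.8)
\emph{Proof plan.} The plan is to reproduce the $C^0$-part of the argument for Proposition \ref{prop:iso_adm}, with the role of the isometry hypothesis now played by the uniform bi-Lipschitz bound \eqref{eq:stronglipschitz}. First I would note that $Y_\delta$ is a \emph{compact} subset of $T\Gamma_1$ on which $f$ acts continuously (indeed Lipschitz): trajectories emanating from $X_\delta(w^*)$ stay a uniform positive distance from $\mathcal{S}$ and meet the faces with angle bounded below — this is precisely what makes \eqref{eq:stronglipschitz} hold — and the same persists after passing to the closure, so $Y_\delta$ avoids both singular vectors and vectors tangent to a face. Thus $f|_{Y_\delta}$ is the sort of map to which a recurrence argument applies.

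Fix $\eta > 0$ and set $\varepsilon := \eta/(C + c^{-1} + 1)$, with $c, C$ the constants from \eqref{eq:stronglipschitz}. Cover $Y_\delta$ by finitely many balls $U_i = B(x_i, \varepsilon)$, $i = 1, \dots, N$, with $x_i \in Y_\delta$. Consider the product map $F := (f, \dots, f)$ ($N$ copies) on $(Y_\delta)^N$ with the sup-metric; each coordinate satisfies \eqref{eq:stronglipschitz}, so $F$ satisfies the analogous inequality with the same constants $c, C$. Let $p := (x_1, \dots, x_N)$ and $\mathcal{O}_p := \overline{\{F^m p : m \geq 0\}}$, which is compact. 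By compactness choose indices $n_1, \dots, n_j$ so that $F^{n_1}p, \dots, F^{n_j}p$ are $\varepsilon$-dense in $\mathcal{O}_p$, and put $\widetilde{N} := \max(n_1, \dots, n_j) + 1$.

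Next I would show that $\mathcal{T} := \{k \in \mathbb{N}_0 : d(F^k p, p) < \varepsilon/c\}$ is relatively dense in $\mathbb{N}_0$ with window $\widetilde{N}$. Given $a \in \mathbb{N}_0$, set $m := a + \widetilde{N} - 1 \geq \max_l n_l$; there is an $l$ with $d(F^m p, F^{n_l}p) < \varepsilon$, and since $m \geq n_l$ the lower bound in \eqref{eq:stronglipschitz} (for the map $F$, with iterate count $n_l$, applied to the pair $F^{m - n_l}p, p$) gives $c\, d(F^{m-n_l}p, p) \leq d(F^m p, F^{n_l}p) < \varepsilon$. Hence $k := m - n_l \in \mathcal{T}$, and $k \in \{a, \dots, a + \widetilde{N} - 1\}$ because $0 \leq m - n_l \leq m$ and $m - n_l \geq m - \max_l n_l = a$. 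This combinatorial step is exactly the one used in Proposition \ref{prop:iso_adm}.

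Finally, for $k \in \mathcal{T}$ I claim $\dist_{C^0}(f^k|_{Y_\delta}, \id) < \eta$, which completes the proof: indeed $k \in \mathcal{T}$ means $d(f^k x_i, x_i) < \varepsilon/c$ for every $i$; given $y \in Y_\delta$, pick $i$ with $d(y, x_i) < \varepsilon$, so the upper bound in \eqref{eq:stronglipschitz} gives $d(f^k y, f^k x_i) \leq C\, d(y, x_i) < C\varepsilon$, and the triangle inequality yields $d(f^k y, y) \leq d(f^k y, f^k x_i) + d(f^k x_i, x_i) + d(x_i, y) < (C + c^{-1} + 1)\varepsilon = \eta$. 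Thus $\mathcal{T}$ is a relatively dense subset of the set in the statement. The only genuine obstacle compared with Proposition \ref{prop:iso_adm} is that $f$ is not an isometry; once \eqref{eq:stronglipschitz} is available the argument goes through, the sole points of care being to keep all iterate counts nonnegative (so that \eqref{eq:stronglipschitz}, which is stated for $m \geq 0$, applies) and to propagate the constant $\eta = (C + c^{-1} + 1)\varepsilon$ through the triangle inequalities.
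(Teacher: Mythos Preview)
Your proof is correct and follows essentially the same approach as the paper: both adapt the $C^0$-recurrence argument of Proposition \ref{prop:iso_adm} by passing to the product map on $(Y_\delta)^N$ and replacing the isometry property with the two-sided Lipschitz bound \eqref{eq:stronglipschitz}. Your final triangle inequality is in fact slightly cleaner than the paper's (you compare $f^ky$ directly to $f^kx_i$ via the upper Lipschitz bound, rather than introducing an auxiliary point $y_i \in B(x_i,\eta/c)\cap f^kU_i$), and your care in keeping iterate counts nonnegative and pre-scaling $\varepsilon$ so that the conclusion lands exactly at $\eta$ is a nice touch.
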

        \begin{proof}
            The proof is similar to the proof of Proposition \ref{prop:iso_adm} and we just sketch the main differences. The main idea is that instead of having $f$ an isometry, we will use the property \eqref{eq:stronglipschitz}. 
            
            Take $y \in Y_\delta$ and consider the orbit $\mathcal{O}_y = \{f^iy: i \geq 0\}$. By compactness, there are $n_1, \dotso, n_N \in \mathbb{N}_0$ such that $f^{n_1}y, \dotso, f^{n_N}y$ are $\eta$-dense in $\overline{\mathcal{O}_y}$. Then the set of $m \in \mathbb{N}_0$ such that $f^my$ is $\eta$-close to $y$ is relatively dense, as for each $m$ there is an $i$ such that, using \eqref{eq:stronglipschitz}
            \[\eta > d(f^my, f^{n_i}y) \geq cd(f^{m-n_i}y, y).\]
            If we set $U = B(y, \eta)$, by above we see $f^{m-n_i}y \in f^{m-n_i} U \cap B(y, \frac{\eta}{c})$. 
            
            Consider a cover of $Y_\delta$ by balls $U_i = B(x_i, \eta)$ for $i = 1, \dotso, N$ and apply the previous claim to the map $(f, \dotso, f): (Y_\delta)^N \to (Y_\delta)^N$ (it satisfies the property \eqref{eq:stronglipschitz} with the same constants) and the point $(x_1, \dotso, x_N)$. Thus there is a relatively dense set $\mathcal{T} \subset \mathbb{N}_0$ such that $B(x_i, \frac{\eta}{c}) \cap f^kU_i \neq \emptyset$ for every $k \in \mathcal{T}$ and $i = 1, \dotso, N$.
            
            Take $y \in Y_\delta$, $k \in \mathcal{T}$ and choose $y_i \in B(x_i, \frac{\eta}{c}) \cap f^kU_i$. There is an $i$ such that $y \in U_i$ and so 
            \[d(y, y_i) < d(y, x_i) + d(x_i, y_i) < \eta (1 + \frac{1}{c}).\] 
            Since $f^ky \in f^kU_i$ and using \eqref{eq:stronglipschitz}, we have $d(f^ky, y_i) < 2C\eta$. By triangle inequality 
            \[d(y, f^ky) < d(y, y_i) + d(y_i, f^ky) < \eta (1 + \frac{1}{c} + 2C).\]
            This finishes the proof.
        \end{proof}

        By the proof of Proposition \ref{prop:maximaltube}, $X(w^*)$ is convex and every trajectory generated by a point on $\partial X(w^*)$ comes arbitrarily close to the singular set. Take $\delta = \varepsilon/4$ and pick $\varepsilon/4$-dense points $x_1, \dotso, x_N \in \partial X_\delta(w^*)$ such their trajectories come $\varepsilon/2$-close to the singular set in time (length) $L_0$. Denote by $T_\delta^\infty(f^{k}x^*)$ the unfolded tube generated by $X_\delta(w(f^{k}x^*))$, i.e. the complement of the $\delta$-neighbourhood of $\partial T^\infty(f^kx^*)$. Take $\eta$ in Lemma \ref{lemma:stronglipschitz} small enough, such that $\dist_{C^0}(f^k|_{Y_\delta}, \id) < \eta$ for $k \in \mathcal{T}$ relatively dense, so that the following holds. For $k \in \mathcal{T}$ and each $i = 1, \dotso, N$, the points $f^kx_i$ are close enough to $x_i$, so that the trajectories generated by $f^kx_i$ come $\varepsilon/2$-close to the singular set in time $L_0$. Since $\mathcal{T}$ is relatively dense, this means that there is an $L > 0$ such that for each $i$, the trajectories generated by $x_i$ come $\varepsilon/2$-close to the singular set in every segment of length $L$.
        
        Finally, consider $T^\infty(f^{m_i}x)$, i.e. the maximal unfolded tube generated by $f^{m_i}x$; it has a minimal diameter $\varepsilon$ by the assumptions. Taking $i \to \infty$ and by the construction above, a segment of length $L$ of one the trajectories generated by $x_j$, together with its $\varepsilon/2$-neighbourhood, will eventually be contained in the interior of $T^\infty(f^{m_i}x)$. Thus the interior of $T^\infty(f^{m_i}x)$ will contain a singular point, contradicting the definition of maximal tubes.
    \end{proof}
    \section{On a control estimate}\label{app:B}
    
    
In this Appendix we prove a result on control and observability for an elliptic PDE on a product space, which has been generalised in Theorem \ref{thm:cont_per_cond} above. The result is proved in unpublished lecture notes of N. Burq (see the footnote of \cite[pp. 17]{BZ1}). 

To this end, let $(M_x, g_x)$ and $(M_y, g_y)$ be two compact Riemannian manifolds with Lipschitz boundary. We will consider the product manifold $(M_x \times M_y, g_x \oplus g_y) =: (M, g)$. Denote by $-\Delta_g$ the positive-definite Laplace-Beltrami operator.

Recall that we say a subset $A \subset M_x$ satisfies the \emph{geometric control condition} or just \emph{(GCC)} if every geodesic $\gamma$ in $M_x$ hits $A$ in finite time. 

\begin{theorem}\label{thm:product}
        Let $s \in \RR$ 
        and assume $u \in H_0^1(M)$ satisfies
        \begin{align}\label{eq:elleq}
            (- \Delta_g - s) u = f.
        \end{align}
        Now let $\omega \subset M_x$ be an open, non-empty set, satisfying (GCC). Then there exists a constant $C = C(M, g) > 0$ 
        such that the following observability estimate holds:
        \begin{align}\label{eq:mainineq1}
            \lVert{u}\rVert_{L^2(M)} \leq C(\lVert{f}\rVert_{H^{-1}_xL^2_y (M)} + \lVert{u|_{\omega \times M_y}}\rVert_{L^2(\omega \times M_y)}).
        \end{align}
\end{theorem}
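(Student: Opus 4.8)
The plan is to use a separation of variables argument with respect to the $M_y$ factor, reducing the observability estimate on the product $M$ to a family of analogous estimates on $M_x$ with spectral parameters shifted by the eigenvalues of $-\Delta_{g_y}$, and then to invoke a uniform apriori estimate of the type in Lemma \ref{lemma:auxiliary}. First I would fix an orthonormal basis $\{e_k\}_{k \geq 1}$ of $L^2(M_y)$ consisting of Dirichlet eigenfunctions of $-\Delta_{g_y}$, with $-\Delta_{g_y} e_k = \mu_k e_k$ and $\mu_k \to \infty$. Expanding $u$ and $f$ in this basis, $u(x,y) = \sum_k u_k(x) e_k(y)$ and $f(x,y) = \sum_k f_k(x) e_k(y)$ with $u_k \in H_0^1(M_x)$ and $f_k \in H^{-1}(M_x)$, equation \eqref{eq:elleq} decouples into
\begin{align*}
    (-\Delta_{g_x} - (s - \mu_k)) u_k = f_k, \qquad u_k|_{\partial M_x} = 0,
\end{align*}
for each $k$. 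The first task is to justify this decoupling rigorously (that the $y$-Fourier series converges appropriately in the relevant Sobolev spaces and may be differentiated termwise in the distributional sense), which is a routine functional-analytic argument once one notes that $-\Delta_g = -\Delta_{g_x} \otimes \mathrm{id} + \mathrm{id} \otimes (-\Delta_{g_y})$ on the natural domain.

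Next I would apply Lemma \ref{lemma:auxiliary} to each $u_k$: since $\omega$ is an open set satisfying (GCC) (and, recalling the remark after that lemma, when $M_x \subset \mathbb{R}^n$ any neighbourhood of the boundary works, though here we only need what the lemma gives for general open (GCC) sets — one should really invoke the statement as proved), there is a constant $C$ \emph{independent of the spectral parameter} $s - \mu_k$ such that
\begin{align*}
    \lVert u_k \rVert_{L^2(M_x)} \leq C\big( \lVert f_k \rVert_{H^{-1}(M_x)} + \lVert u_k|_\omega \rVert_{L^2(\omega)} \big).
\end{align*}
The crucial point — and the main obstacle — is the uniformity of $C$ in the parameter $s - \mu_k$, since the $\mu_k$ range over an unbounded set; this is precisely what the case analysis (Cases 0--3) in the proof of Lemma \ref{lemma:auxiliary} delivers, with the semiclassical defect measure argument handling $s - \mu_k \to +\infty$, the integration-by-parts argument handling $s - \mu_k \to -\infty$, and compactness handling the bounded range. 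I would square the displayed inequality, use $(a+b)^2 \leq 2a^2 + 2b^2$, and sum over $k$.

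Finally, summing and applying the Parseval identity in the $y$-variable repeatedly,
\begin{align*}
    \lVert u \rVert_{L^2(M)}^2 = \sum_k \lVert u_k \rVert_{L^2(M_x)}^2 \leq 2C^2 \sum_k \big( \lVert f_k \rVert_{H^{-1}(M_x)}^2 + \lVert u_k|_\omega \rVert_{L^2(\omega)}^2 \big) = 2C^2 \big( \lVert f \rVert_{H^{-1}_x L^2_y(M)}^2 + \lVert u|_{\omega \times M_y} \rVert_{L^2(\omega \times M_y)}^2 \big),
\end{align*}
where the identification $\lVert f \rVert_{H^{-1}_x L^2_y(M)}^2 = \sum_k \lVert f_k \rVert_{H^{-1}(M_x)}^2$ uses that $\{e_k\}$ is orthonormal in $L^2(M_y)$ and the definition of the mixed-norm space, and similarly for the restriction to $\omega \times M_y$ (here one uses that $e_k|_{M_y}$ is unchanged — there is no restriction in the $y$ variable). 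Taking square roots and absorbing the constant gives \eqref{eq:mainineq1}. This is essentially the template that Theorem \ref{thm:cont_per_cond} generalizes, the difference being that there the role of the orthonormal basis $\{e^{iky/L}\}$ (or $\{e_k\}$) is played by the Fourier--Bohr exponentials of an almost periodic function and Parseval is replaced by \eqref{eq:parseval}.
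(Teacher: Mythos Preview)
Your proposal is correct and follows essentially the same approach as the paper: expand in Dirichlet eigenfunctions of $-\Delta_{g_y}$, decouple into the family $(-\Delta_{g_x} - (s-\mu_k))u_k = f_k$, apply Lemma \ref{lemma:auxiliary} with the constant uniform in the spectral parameter, and sum via Parseval. Your parenthetical remark about the hypothesis on $\omega$ (neighbourhood of the boundary versus general open (GCC) set) is in fact more careful than the paper's own invocation of the lemma.
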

\begin{proof}
    Consider the basis of orthonormal Dirichlet eigenfunctions $\{e_k\}_{k = 1}^\infty \subset L^2(M_y)$, such that $-\Delta_{g_y} e_k = \lambda_k e_k$, where $0 < \lambda_1 \leq \lambda_2 \leq \dotso \nearrow \infty$ are eigenvalues. We consider $u$ and $f$ satisfying \eqref{eq:elleq}, and consider the Fourier expansion into $e_k$:
    \begin{align*}
        u(x, y) = \sum_{k = 1}^\infty u_k(x) e_k(y), \quad f(x, y) = \sum_{k = 1}^\infty f_k(x) e_k(y).
    \end{align*}
    The first sum converges in $H^1(M)$ as $u \in H_0^1(M)$ and thus the second sum converges in $H^{-1}(M)$. Moreover, we have that $u_k \in H_0^1(M_x)$ and $f_k \in H^{-1}(M_x)$ as $u \in H_0^1(M)$, and we may show the following formulas for all $k \in \mathbb{N}$ hold
    \begin{align*}
        u_k(x) = \int_{M_y} e_k(y) u(x, y) d\vol_y,
        \quad f_k(x) = \int_{M_y} e_k(y) f(x, y) d\vol_y.
    \end{align*}
    Here $d\vol_y$ denotes the volume density on $M_y$. Then we have, by using \eqref{eq:elleq} and the eigenfunction condition:
    \begin{align*}
        (-\Delta_g - s) \sum_{k = 1}^\infty e_k u_k 
        = \sum_{k = 1}^\infty e_k(-\Delta_{g_x} - s + \lambda_k) u_k = \sum_{k = 1}^\infty e_k f_k.
    \end{align*}
    Therefore, we conclude for $k \in \mathbb{N}$
    \begin{align}\label{eq:fourierk}
        (-\Delta_{g_x} - (s - \lambda_k)) u_k(x) = f_k(x).
    \end{align}
    The remaining ingredient is the observability estimate on $(M_x, g_x)$ given in Lemma \ref{lemma:auxiliary}. The claim follows by applying the lemma to equation \eqref{eq:fourierk} for each $k \geq 1$
    \begin{align}
        \begin{split}
            \lVert{u}\rVert_{L^2(M)}^2 = \sum_{k = 1}^\infty \lVert{u_k}\rVert_{L^2(M_x)}^2 &\leq C \Big(\sum_{k = 1}^\infty \lVert{f_k}\rVert^2_{H^{-1}(M_x)} + \sum_{k = 1}^\infty \lVert{u_k|_{\omega}}\rVert_{L^2(\omega)}^2\Big)\\
            &= C\big(\lVert{f}\rVert_{H^{-1}_x L^2_y (M)}^2 + \lVert{u|_{\omega \times M_y}}\rVert_{L^2(\omega \times M_y)}^2\big).
        \end{split}
    \end{align}
    \end{proof}
    
    We can actually handle periodic conditions similarly, so we have
\begin{theorem}\label{thm:controlperiodic}
    Assume the same setup as in the previous theorem, just with $M_y = [0, 1]$. This time we take periodic boundary conditions in \eqref{eq:elleq}, i.e. 
    \begin{align*}
        u(x, 0) = u(x, 1) \quad \mathrm{for} \quad x \in M_x, \quad u|_{\partial M_x \times [0, 1]} = 0.
    \end{align*}
    Then the estimate \eqref{eq:mainineq} holds.
\end{theorem}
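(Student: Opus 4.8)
The plan is to run the proof of Theorem~\ref{thm:product} essentially verbatim, only replacing the Dirichlet eigenbasis of $M_y=[0,1]$ by the \emph{periodic} one. Concretely, the functions $e_k(y):=e^{2\pi i k y}$, $k\in\mathbb{Z}$, form an orthonormal basis of $L^2([0,1])$ consisting of eigenfunctions of $-\partial_y^2$, namely $-\partial_y^2 e_k=4\pi^2 k^2 e_k$, and they satisfy the periodic conditions $e_k(0)=e_k(1)$, $e_k'(0)=e_k'(1)$. Given $u\in H^1(M)$ with $u(x,0)=u(x,1)$ and $u|_{\partial M_x\times[0,1]}=0$, and $f:=(-\Delta_g-s)u\in H^{-1}(M)$, I would expand
\[
u(x,y)=\sum_{k\in\mathbb{Z}}u_k(x)\,e_k(y),\qquad f(x,y)=\sum_{k\in\mathbb{Z}}f_k(x)\,e_k(y),
\]
with $u_k(x)=\int_0^1 \overline{e_k(y)}\,u(x,y)\,dy$ and $f_k(x)=\int_0^1 \overline{e_k(y)}\,f(x,y)\,dy$. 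As in Theorem~\ref{thm:product}, the periodic boundary condition places $u$ in the form domain of $-\partial_y^2$ with periodic conditions, so the first series converges in $H^1(M)$, whence the second converges in $H^{-1}(M)$; moreover $u_k\in H_0^1(M_x)$ (it inherits the homogeneous Dirichlet condition on $\partial M_x$ from $u$) and $f_k\in H^{-1}(M_x)$.

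Next I would separate variables. Using $-\partial_y^2 e_k=4\pi^2k^2 e_k$ together with $(-\Delta_g-s)u=f$ and equating Fourier coefficients gives, for every $k\in\mathbb{Z}$,
\[
\bigl(-\Delta_{g_x}-(s-4\pi^2 k^2)\bigr)u_k(x)=f_k(x),\qquad u_k|_{\partial M_x}=0.
\]
This is exactly the hypothesis of Lemma~\ref{lemma:auxiliary} with spectral parameter $s-4\pi^2k^2$. The key point — and the reason $\omega$ need only satisfy (GCC) — is that the constant $C=C(g_x,\omega)$ in that lemma is uniform in the spectral parameter; this uniformity is precisely the content of Cases~0--3 there. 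Hence, applying Lemma~\ref{lemma:auxiliary} to the real and imaginary parts of $u_k$, we obtain, uniformly in $k$,
\[
\lVert u_k\rVert_{L^2(M_x)}\le C\bigl(\lVert f_k\rVert_{H^{-1}(M_x)}+\lVert u_k|_{\omega}\rVert_{L^2(\omega)}\bigr).
\]

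Finally I would square these inequalities and sum over $k\in\mathbb{Z}$, invoking the Parseval identity in the $y$-variable for the orthonormal basis $\{e_k\}$: this identifies $\sum_k\lVert u_k\rVert_{L^2(M_x)}^2=\lVert u\rVert_{L^2(M)}^2$, $\sum_k\lVert f_k\rVert_{H^{-1}(M_x)}^2=\lVert f\rVert_{H^{-1}_xL^2_y(M)}^2$ and $\sum_k\lVert u_k|_{\omega}\rVert_{L^2(\omega)}^2=\lVert u|_{\omega\times[0,1]}\rVert_{L^2(\omega\times[0,1])}^2$, yielding the desired estimate \eqref{eq:mainineq} (which, with $\varphi=\id$ and $L=1$, reads over $\mathcal{C}_{\id}=M_x\times[0,1]/\!\sim$ exactly as the periodic analogue of \eqref{eq:mainineq1}). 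There is no genuine obstacle here: the argument is a line-for-line transcription of the proof of Theorem~\ref{thm:product}, the only changes being that the $y$-eigenvalues are now $4\pi^2k^2$ with $k$ ranging over $\mathbb{Z}$, and the minor bookkeeping that one should check the Fourier series converges in the correct spaces and that $u_k$ inherits the Dirichlet condition on $\partial M_x$.
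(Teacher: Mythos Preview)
Your proposal is correct and is exactly the approach the paper takes: view $u$ as a function on $M_x\times S^1$, expand in the Fourier basis $e^{2\pi i k y}$, apply Lemma~\ref{lemma:auxiliary} mode-by-mode, and sum via Parseval. The paper's own proof is a two-line sketch saying precisely this, so your write-up is in fact more detailed than the original.
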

\begin{proof}
    The proof is completely analogous to the proof of the previous theorem, by just noting that we may look at $u$ as a function on $M_x \times S^1$. Then we expand $u$ into Fourier basis on $S^1$ and apply Lemma \ref{lemma:auxiliary}.
\end{proof}

\end{document}